\numberwithin{equation}{section}
      \def\cF{\mathcal F}      \def\cL{\mathcal L}            
\def\fp{\mathfrak{p}}
\def\fp{\mathfrak{p}}
\def\Z{{\mathbb Z}} \def\R{{\mathbb R}}  \def\N{{\mathrm N}} \def\C{{\mathbb C}} \def\Q{{\mathbb Q}} \def\P{{\mathbb P}}
\def\X{{\mathbb X}} \def\E{{\mathbb E}}
\newcommand{\PP}{\mathbb P}
\newcommand{\kommentar}[1]{}
\newcommand{\legendre}[2]{\left(\frac{#1}{#2}\right)}
\DeclareMathOperator{\re}{Re}
\DeclareMathOperator{\im}{Im}
\def \e{\mathbf{e}}
\renewcommand{\mod}[1]{\,{\rm mod}\,#1}
\renewcommand{\mod}[1]{\,(\mathrm{mod}\,#1)}
\def\le{\leqslant} \def\ge{\geqslant}
\newtheorem{lem}{Lemma}[section]
\newtheorem{prop}[lem]{Proposition}
\newtheorem{thm}[lem]{Theorem}
\newtheorem{conj}[lem]{Conjecture}
\theoremstyle{definition}
\newtheorem{rem}[lem]{Remark}
\definecolor{pink}{rgb}{1,.2,.6}
\definecolor{orange}{rgb}{0.7,0.3,0}
\definecolor{blue}{rgb}{.2,.6,.75}
\definecolor{green}{rgb}{.4,.7,.4}
\definecolor{purple}{RGB}{127,0,255}
\newcommand\cube{\begin{tikzpicture}[scale=2.3]
    \coordinate (A1) at (0, 0);
    \coordinate (A2) at (0, 0.1);
    \coordinate (A3) at (0.1, 0.1);
    \coordinate (A4) at (0.1, 0);
    \coordinate (B1) at (0.03, 0.03);
    \coordinate (B2) at (0.03, 0.13);
    \coordinate (B3) at (0.13, 0.13);
    \coordinate (B4) at (0.13, 0.03);

    \draw (A1) -- (A2);
    \draw (A2) -- (A3);
    \draw (A3) -- (A4);
    \draw (A4) -- (A1);
    \draw[densely dotted] (A1) -- (B1);
    \draw[densely dotted] (B1) -- (B2);
    \draw (A2) -- (B2);
    \draw (B2) -- (B3);
    \draw (A3) -- (B3);
    \draw (A4) -- (B4);
    \draw (B4) -- (B3);
    \draw[densely dotted] (B1) -- (B4);
\end{tikzpicture}}
\newcommand\tinycube{\begin{tikzpicture}[scale=1.3]
    \coordinate (A1) at (0, 0);
    \coordinate (A2) at (0, 0.1);
    \coordinate (A3) at (0.1, 0.1);
    \coordinate (A4) at (0.1, 0);
    \coordinate (B1) at (0.03, 0.03);
    \coordinate (B2) at (0.03, 0.13);
    \coordinate (B3) at (0.13, 0.13);
    \coordinate (B4) at (0.13, 0.03);

    \draw (A1) -- (A2);
    \draw (A2) -- (A3);
    \draw (A3) -- (A4);
    \draw (A4) -- (A1);
    \draw[densely dotted] (A1) -- (B1);
    \draw[densely dotted] (B1) -- (B2);
    \draw (A2) -- (B2);
    \draw (B2) -- (B3);
    \draw (A3) -- (B3);
    \draw (A4) -- (B4);
    \draw (B4) -- (B3);
    \draw[densely dotted] (B1) -- (B4);
\end{tikzpicture}}
\begin{document}

\title{Asymmetric Distribution of Extreme Values of Cubic $L$-functions at $s=1$}
\author{Pranendu Darbar, Chantal David, Matilde Lalin, Allysa Lumley}

\address{Pranendu Darbar: Max Planck Institute for Mathematics, Vivatsgasse 7, 53111 Bonn}
\email{darbarpranendu100@gmail.com}

\address{Chantal David: 
Department of Mathematics and Statistics, Concordia University, 1455 de Maisonneuve West, Montr\'eal, QC H3G 1M8, Canada }
\email{chantal.david@concordia.ca}

\address{Matilde Lal\'in: 
D\'epartement de math\'ematiques et de statistique,
                                    Universit\'e de Montr\'eal.
                                    CP 6128, succ. Centre-ville.
                                     Montreal, QC H3C 3J7, Canada}\email{matilde.lalin@umontreal.ca}

 \address{Allysa Lumley: Department of Mathematics and Statistics, York University, N520 Ross, 4700 Keele Street,
Toronto, ON M3J 1P3, Canada} \email{alumley2@yorku.ca}              

\begin{abstract}
We investigate the distribution of values of cubic Dirichlet $L$-functions at $s=1$. Following ideas of Granville and Soundararajan  for quadratic $L$-functions, we model the distribution of $L(1,\chi)$ by the distribution of random Euler products $L(1,\mathbb{X})$ for certain family of random variables $\mathbb{X}(p)$ attached to each prime. We obtain a description of the proportion of $|L(1,\chi)|$ that are larger or that are smaller than a given bound, and yield more light into the Littlewood bounds. Unlike
the quadratic case, there is an asymmetry between lower and upper
bounds for the cubic case, and small values are less probable than large values.
 
\end{abstract}
\keywords{extreme values of $L$-functions; cubic characters; Littlewood bounds}
\subjclass[2020]{11M06, 11R16, 11M20}
\date{\today}
\numberwithin{equation}{section}
\maketitle

\section{Introduction}
We study in this paper the distribution of the values of Dirichlet $L$-functions attached to cubic characters. Let $\chi$ be a primitive cubic Dirichlet character over $\Q$, and let
\begin{align*}
L(s, \chi) = \sum_{n=1}^\infty \frac{\chi(n)}{n^s}.
\end{align*}
We are interested in the distribution of the special values $|L(1, \chi)|$ as $\chi$ varies over the family $\mathcal{F}_3$ of cubic primitive characters over $\Q$. 
The approach of this work is to compare the distribution of values for $|L(1,\chi)|$ with the distribution of values of a random Euler product, $|L(1,\mathbb{X})|$, where  $L(1,\mathbb{X})=\prod_{p}\left(1-\frac{\mathbb{X}(p)}{p}\right)^{-1}$ and the $\mathbb{X}(p)$ are independent random variables that take the values $0,1,\omega_3 := e^{\frac{2 \pi i}{3}}$ and $\omega_3^2$ with suitable probabilities.  
The main motivation for this work is in determining the extreme values that $|L(1,\chi)|$ can take. 

This problem has been studied thoroughly in the case of the quadratic characters, with the pioneering work of Granville and Soundararajan \cite{GS}  describing the distribution of extreme values for $L(1,\chi_d)$ where $\chi_d$ varies over quadratic characters,   
in order to gain understanding of the well-known discrepancy between the extreme values that $L(1,\chi_d)$ may exhibit (the $\Omega$-results of Chowla, described below) and the conditional bounds on these extreme values (the $O$-results of Littlewood). 

For a quadratic character $\chi_d$ of conductor $|d|$, it was shown by Littlewood \cite{Littlewood-1928} (assuming GRH, the Generalized Riemann Hypothesis) that
\begin{align}\label{LWbound1}
\left( \frac{1}{2} + o(1) \right) \frac{\zeta(2)}{e^\gamma \log_2{|d|}} \leq   L(1, \chi_d)  \leq (2 + o(1)) e^\gamma \log_2{|d|} ,
\end{align}
where here and throughout, $\gamma$ is the Euler-Mascheroni constant, and the notation $\log_j$ represents the $j$-fold iterated logarithm, so that $\log_2 |d|=\log\log |d|$.
On the other hand, under the same hypothesis, Littlewood also established that there are infinitely many fundamental discriminants $d$ for which 
\begin{align}\label{LWbound2} L(1, \chi_d) \geq (1 + o(1)) e^\gamma \log_2{|d|},\end{align}
and there are infinitely many fundamental discriminants $d$ such that 
\begin{align}\label{LWbound3}
L(1, \chi_d) \leq (1 + o(1)) \zeta(2)/(e^\gamma \log_2{|d|}).
\end{align}
In \cite{chowla48}, Chowla removed the assumption of GRH on the $\Omega$-result \eqref{LWbound2}. If we compare the bounds in \eqref{LWbound1} with those obtained in \eqref{LWbound2} and \eqref{LWbound3}, we can see there is a discrepancy in the coefficient of the main term. In \cite{GS}, the authors provide asymptotics for the probability that $L(1,\chi_d) > e^{\gamma}\tau$, and for the probability the $L(1,\chi_d) <
\frac{\zeta(2)}{e^{\gamma}\tau}$ uniformly in a wide range of $\tau$.
The uniformity of their results provides evidence that the $\Omega$-results of Chowla may represent the true nature of these extreme values, but
falls just short of determining which coefficient is the correct one.

We now turn to the extreme values of cubic characters. 
Our first result is a conditional $O$-result in the style of Littlewood for cubic characters.
\begin{prop}\label{thm:littlewood} Let $\chi$ be a primitive character of order $3$ and  conductor $q$. Assume GRH for $L(s,\chi)$.   Then we have 
$$
\left( \frac{1}{\sqrt{2}} + o(1) \right) \left(\frac{\zeta(3)}{e^{\gamma}\log_2 q}\right)^{\frac{1}{2}}
\leq |L(1,\chi)| \leq  \left( 2 + o(1) \right) e^\gamma \log_2 q.
$$
 \end{prop}

The upper bound is not new, as the original proof of Littlewood holds for any  primitive character $\chi$ of conductor $q$, but to our knowledge, the lower bound does not appear in the literature.  Proposition \ref{thm:littlewood} is a particular case of Proposition \ref{thm:littlewood-gen} which holds for characters of prime order $\ell \geq  3$.
We also obtain $\Omega$-results, namely Theorem \ref{maxmin value under GRH}, which exhibits a constant discrepancy similar to the case of quadratic characters: 1 versus 2 for the upper bound and 1 versus $\frac{1}{\sqrt{2}}$ for the lower bound. Other $\Omega$-results for the large values of characters of order $\ell \geq 2$ as \cite[Theorem 1.2]{Lam2017} (for the upper bound) exhibit the same constant discrepancy.

Our main theorems describe the distribution of extreme values for $|L(1,\chi)|$ for a family of cubic characters. Limiting distribution results for cubic characters over $\Q(\sqrt{-3})$ were obtained by Akbary and Hamieh \cite{AkbaryHamieh, AkbaryHamieh2}.  To our knowledge,  our results are the first describing the distribution of extreme values for cubic characters over $\Z$,  by considering the tail of the distribution,  and represent the first family where there is an {\it asymmetric distribution}. 
The origin of this asymmetric distribution is simple: the minimum value of $|L(1,\chi)|$ for cubic characters is larger than the corresponding minumum for quadratic characters. This can already be seen at the level of each individual factor in the Euler product, since  $\chi(p)=\omega_3^h$ for some $h=0, 1,$ or  $2$ and then we have 
\[\left| 1-\frac{\chi(p)}{p}\right| =\left( 1-\frac{2\cos\left(\frac{2\pi h}{3}\right)}{p}+\frac{1}{p^2}\right)^{1/2}.\]
While the minimum value is achieved at  $h=0$, the maximum value is given at  $h=1$ or $h=2$  and therefore 
\[\left| 1-\frac{1}{p}\right| \leq \left| 1-\frac{\chi(p)}{p}\right| \leq \left| 1+\frac{1}{p}+\frac{1}{p^2}\right|^{1/2},\]
which explains the asymmetry. 

See also \cite[Exercise 4, p.~366]{MV-book} for the unconditional lower bound $|L(1, \chi)|\gg \frac1{\sqrt{\log q}}$ for $\chi$ cubic.

As in \cite{GS}, the range of uniformity we are able to achieve leads us to believe that the  $\Omega$-results of Theorem \ref{maxmin value under GRH}  may represent the true nature of the extreme values in cubic families.

In order to precisely describe our results, we need some notation. For $X$ large, let 
 $\mathcal{F}_3(X)$ denote the subset of $\mathcal{F}_3$ of cubic characters with conductor bounded by $X$ and let
\begin{align*} 
\phi_X(\tau)&:= \P \left( |L(1, \chi)| >e^{\gamma}\tau \right) := \frac{1}{|\mathcal{F}_3(X)|}\sum_{\substack{\chi\in \mathcal{F}_3(X)\\ |L(1, \chi)|>e^{\gamma}\tau}}1 \\
\psi_{X}(\tau) &:= \P \left( |L(1, \chi)|  <  \left(\frac{\zeta(3)}{e^{\gamma}}\right)^{\frac{1}{2}}\frac{1}{\tau} \right) := \frac{1}{|\mathcal{F}_3(X)|}\sum_{\substack{\chi\in \mathcal{F}_3(X)\\ |L(1, \chi)| <  \left(\frac{\zeta(3)}{e^{\gamma}}\right)^{\frac{1}{2}}\frac{1}{\tau}
}} 1.
\end{align*}
To study these probabilities, we use the method of moments, and we show that the 
 complex moments of $|L(1,\chi)|$ agree with the expectations of random Euler products $L(1,\mathbb{X})$ 
  for a large range of values. 

We now define the independent random variables $\X(p)$ given by 
\begin{equation}\label{eq:Xp2}
\mathbb{X}(p)=\begin{cases}
                 1 & \mbox{with probability} = \frac{1}{3}, \\ 
                 \omega_3 &  \mbox{with probability} = \frac{1}{3},\\
                 \omega_3^2 &  \mbox{with probability} = \frac{1}{3},
                \end{cases}
                \end{equation}
                when $p \equiv 2 \mod 3$ or $p=3$, and by
\begin{equation}\label{eq:Xp1}
\mathbb{X}(p)=\begin{cases}
                 0 & \mbox{with probability} = \frac{2}{p+2}, \\ 
                 1 & \mbox{with probability} = \frac{p}{3(p+2)}, \\ 
                 \omega_3 &  \mbox{with probability} =  \frac{p}{3(p+2)},\\
                 \omega_3^2 &  \mbox{with probability} =  \frac{p}{3(p+2)},
                \end{cases}
                \end{equation}
                when $p\equiv 1 \mod{3}$.
                We will see in Section 2 why these random variables are naturally associated to the cubic family $\cF_3$.

Let $n=p_1^{a_1}p_2^{a_2}\cdots p_k^{a_k}$ be the prime power factorization of $n$. We extend the definition of $\mathbb{X}$ by multiplicativity
$$\mathbb{X}(n)=\mathbb{X}(p_1)^{a_1}\mathbb{X}(p_2)^{a_2}\cdots \mathbb{X}(p_k)^{a_k},$$
and we define
\begin{align*} 
L(1,\mathbb{X})=\sum_{n=1}^{\infty}\frac{\mathbb{X}(n)}{n}=\prod_{p \text{ prime}}\left(1-\frac{\mathbb{X}(p)}{p}\right)^{-1}, 
 \end{align*}
where both the series and the product are almost surely convergent by Lemma \ref{expectation}.
Before describing how well this model approximates the distribution of $|L(1,\chi)|$, it is useful to understand the behaviour of the distribution of $|L(1,\mathbb{X})|$.  To this end, we define,  for $\tau>0$, 
\begin{align}
\Phi(\tau) &:= \P \left( |L(1, \X)| >  e^\gamma \tau \right), \label{def:Phi}\\
\Psi(\tau) &:=  \P \left( |L(1, \X)| < \left(\frac{\zeta(3)}{ e^\gamma}\right)^{\frac{1}{2}} \frac{1}{\tau} \right).\label{def:Psi}
\end{align}
We obtain the following asymptotic behaviour for $\Phi(\tau)$ and $\Psi(\tau)$, which are each decaying doubly exponentially, although with different rates:
\begin{thm} \label{thm-RV}
	For large $\tau$, we have
	\begin{align*}
	\Phi(\tau)=\exp\left(-\frac{2e^{ \tau-C_{\max}}}{\tau}\left(1+O(\tau^{-1})\right)\right)
	\end{align*}
	and
	\[
	\Psi(\tau)=\exp\bigg(-\frac{e^{\tau^2-C_{\min}}}{\tau^2} \left(1+O(\tau^{-1})\right)\bigg),
	\]
	where $C_{\max}\approx 0.98727\dots $ and $C_{\min}\approx 1.40459\dots$ are defined in \eqref{constantC0} and \eqref{minimum value} respectively.
\end{thm}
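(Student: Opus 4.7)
The plan is to follow the saddle-point method of Granville and Soundararajan \cite{GS}, adapted to the cubic setting. For the upper tail, I introduce the moment generating function
\[
F(s) := \E\bigl[|L(1,\X)|^s\bigr] = \prod_p \E\Bigl[\bigl|1 - \tfrac{\X(p)}{p}\bigr|^{-s}\Bigr],
\]
valid by independence of the $\X(p)$. A Markov-type inequality yields $\Phi(\tau) \leq F(s)\, e^{-s(\gamma+\log\tau)}$ for every $s > 0$, and the matching lower bound will follow by conditioning on the event $\{\X(p) = 1 \text{ for all } p \leq y\}$ for a suitably chosen $y = y(\tau)$, together with a second-moment control of the tail product $\prod_{p>y}(1-\X(p)/p)^{-1}$. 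An entirely analogous setup using $G(s) := \E[|L(1,\X)|^{-s}]$ handles the lower tail.

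The central task is the asymptotic expansion of $\log F(s)$ for large $s$. Writing the local expectation explicitly via \eqref{eq:Xp2}--\eqref{eq:Xp1},
\[
\E\bigl[|1-\X(p)/p|^{-s}\bigr] = \P(\X(p)=0) + \P(\X(p)=1)(1-1/p)^{-s} + 2\,\P(\X(p)=\omega_3)(1+1/p+1/p^2)^{-s/2},
\]
and noting $|1-1/p| < |1-\omega_3/p| = \sqrt{1+1/p+1/p^2}$, I split the Euler product at the threshold $p \sim s$: for $p \leq s$, the local factor is dominated by $\P(\X(p)=1)(1-1/p)^{-s}$; for $p > s$, a Taylor expansion combined with the cubic vanishing $\E[\X(p)] = \E[\X(p)^2] = 0$ gives a local contribution of order $s^2/(4p^2)$. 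In the crossover range $p \sim s$, the two sub-dominant terms from $\X(p) \in \{\omega_3, \omega_3^2\}$ contribute a correction whose evaluation via a change-of-variables integral is ultimately responsible for the extra factor of $2$ appearing in the exponent of $\Phi$. Applying Mertens' theorem together with the prime number theorem in arithmetic progressions modulo $3$ (to handle the different distributions of $\X(p)$ for $p \equiv 1$ and $p \equiv 2 \pmod 3$) yields an expansion of the shape
\[
\log F(s) = s\log\log s + s\gamma + \frac{As}{\log s} + O\bigl(s/(\log s)^2\bigr),
\]
for an explicit constant $A$ given as an Euler product/integral involving the cubic local data.

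The saddle-point equation $(\log F)'(s) = \gamma + \log\tau$ then determines $\log s = \tau - 1 - A + O(\tau^{-1})$, and substituting back into the Markov bound produces the claimed asymptotic for $\Phi(\tau)$, with the identification $C_{\max} = 1 + A - \log 2$. For $\Psi$, the analysis of $G(s)$ is parallel, except that small values of $|L(1,\X)|$ are now driven by $\X(p) \in \{\omega_3, \omega_3^2\}$: because $\log|1-\omega_3/p| = \tfrac{1}{2p} + O(1/p^2)$, the effective Mertens rate is halved compared to the upper-tail case, and this halving forces the saddle to lie at $\log s \sim \tau^2$ rather than $\log s \sim \tau$. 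Carrying through the analogous saddle-point inversion yields the stated formula for $\Psi(\tau)$ with the quadratic exponent $\tau^2$ in place of $\tau$, which is the source of the asymmetric decay highlighted in the introduction.

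The main obstacle is obtaining the sharp constants $C_{\max}$ and $C_{\min}$: this requires computing the expansions of $\log F$ and $\log G$ uniformly across the crossover regime $p \sim s$, carefully evaluating the resulting Euler product of sub-dominant local factors (including the $p$-dependent bias for $p \equiv 1 \pmod 3$), and producing a lower bound matching the Markov upper bound. The latter is achieved by refining the conditioning argument with a second-moment estimate for the tail product and, if necessary, replacing Markov's inequality by a saddle-point integral argument to capture the correct sub-exponential factor.
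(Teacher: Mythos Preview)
Your saddle-point strategy for the moment generating function is correct and matches the paper: its Proposition~\ref{L-est-prop} is precisely your expansion of $\log F(s)$, and the optimizing equation is the same. The Markov/Chernoff upper bound, optimized at the saddle, already gives the sharp upper bound $\Phi(\tau)\le\exp\bigl(-\tfrac{2e^{\tau-C_{\max}}}{\tau}(1+O(\tau^{-1}))\bigr)$.

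The genuine gap is the lower bound. Conditioning on $\{\X(p)=1\text{ for all }p\le y\}$ with $\log y\sim\tau$ yields
\[
\Phi(\tau)\ \ge\ \prod_{p\le y}\P(\X(p)=1)\cdot\bigl(1+o(1)\bigr)\ \asymp\ 3^{-\pi(y)}\ =\ \exp\Bigl(-\frac{(\log 3)\,e^{\tau}}{\tau}(1+o(1))\Bigr),
\]
whereas the theorem claims constant $2e^{-C_{\max}}\approx 0.746$ in front of $e^{\tau}/\tau$, and $\log 3\approx 1.099>0.746$. So the conditioning lower bound is off by a fixed multiplicative constant inside the doubly-exponential, and no second-moment control of the tail $\prod_{p>y}$ can repair this: the loss occurs in the small-prime regime, where forcing $\X(p)=1$ everywhere is too expensive compared with the tilted distribution that the saddle-point implicitly uses. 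The paper closes this gap by sandwiching the indicator of $\{|L(1,\X)|>e^{\gamma}\tau\}$ between two explicit Mellin integrals (Lemma~\ref{approx-phi}), shifting both to the line $\re(s)=\kappa$, and evaluating by Laplace's method using the decay bound of Lemma~\ref{DL-lemma4.6}; this produces matching upper and lower bounds (Theorem~\ref{main thm for Distribution of RV}). Your aside about ``replacing Markov's inequality by a saddle-point integral argument'' is pointing at exactly the missing ingredient, but it is the heart of the proof, not a refinement.

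Two smaller corrections. The factor $2$ in $\exp(-2e^{\tau-C_{\max}}/\tau)$ does not come from the crossover region or from the two values $\omega_3,\omega_3^2$; it comes from the leading coefficient in $\mathcal{L}(r)=2r\log\log r+\cdots$, which reflects that the dominant local factor $(1-1/p)^{-2r}\sim e^{2r/p}$ carries exponent $2r/p$, versus $(1+1/p+1/p^2)^{r}\sim e^{r/p}$ for the lower tail (your correct explanation of why $\Psi$ involves $\tau^2$ is exactly this halving). And with $s=2r$ one gets $A=C_{\max}-1-\log 2$, so $C_{\max}=1+A+\log 2$, not $1+A-\log 2$.
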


We then show that the distribution of $L(1, \chi)$ over $\chi\in \mathcal{F}_3(X)$ is well approximated by the distribution for the random Euler product $L(1, \X)$, in a large range of $\tau$.

\begin{thm}\label{theorem-relating-to-RV} 
	Let $X$ be large.
	Then, uniformly in the range $1 \leq  \tau \leq  \log_2 X- \log_3 X-\log_4 X-2$, we have
	\begin{align*}
	\phi_X(\tau) &=\exp\left(-\frac{2e^{ \tau-C_{\max}}}{\tau}\left(1+O(\tau^{-1/2})\right)\right), \\
	\end{align*}
	and uniformly in the range $1 \leq  \tau \leq  \sqrt{\log_2 X- \log_3 X-\log_4 X-2}$, 
	\begin{align*}
	\psi_X(\tau) &=\exp\left(-\frac{e^{ \tau^2-C_{\min}}}{\tau^2}\left(1+O(\tau^{-1/2})\right)\right), 
	\end{align*}
	where $C_{\max}$ and $C_{\min}$ are as in Theorem \ref{thm-RV}.
\end{thm}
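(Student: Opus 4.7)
\textbf{Proof plan for Theorem \ref{theorem-relating-to-RV}.} The strategy follows the Granville--Soundararajan framework from \cite{GS}, adapted to the cubic family, and fed by the random--model asymptotics of Theorem \ref{thm-RV}. Throughout, fix a truncation parameter $y=y(X)$ (typically $y$ a fractional power of $\log X$, divided by a small power of $\log_2 X$), and write
\[
L(1,\chi;y):=\prod_{p\le y}\left(1-\frac{\chi(p)}{p}\right)^{-1},\qquad L(1,\mathbb{X};y):=\prod_{p\le y}\left(1-\frac{\mathbb{X}(p)}{p}\right)^{-1}.
\]
The first step is to reduce from $L(1,\chi)$ to $L(1,\chi;y)$. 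Using the GRH bounds from Theorem \ref{thm:littlewood} together with a standard second--moment estimate for $\log L(1,\chi)-\log L(1,\chi;y)$, one shows that outside an exceptional subset of $\mathcal{F}_3(X)$ of size $o(|\mathcal{F}_3(X)|)$, the two quantities differ by $o(1)$. Almost--sure convergence of the random Euler product (Lemma \ref{expectation}) yields the analogous statement for $L(1,\mathbb{X})$. Hence it suffices to compare the distributions of $|L(1,\chi;y)|$ and $|L(1,\mathbb{X};y)|$.

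The second step is the moment match. For every integer $k\ge 1$ and every complex $z$ with $|z|\le k$, expand $|L(1,\chi;y)|^{2k}$ as a short Dirichlet series supported on $y$--smooth integers $n\le y^{O(k)}$, and average in $\chi$. Cubic orthogonality over $\mathcal{F}_3(X)$ (developed in Section 2 of the paper, via the Kummer parametrisation of primitive cubic characters) gives
\[
\frac{1}{|\mathcal{F}_3(X)|}\sum_{\chi\in\mathcal{F}_3(X)}\chi(m)\overline{\chi(n)}
=\mathbb{E}[\mathbb{X}(m)\overline{\mathbb{X}(n)}]+\mathrm{error},
\]
where the error is a power saving as long as $mn$ is smaller than a fixed power of $X$. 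This is precisely the reason the two families of weights in \eqref{eq:Xp2}--\eqref{eq:Xp1} appear. Summing gives
\[
\frac{1}{|\mathcal{F}_3(X)|}\sum_{\chi\in\mathcal{F}_3(X)}|L(1,\chi;y)|^{2k}=\mathbb{E}\left[|L(1,\mathbb{X};y)|^{2k}\right](1+o(1)),
\]
uniformly for $y^{k}\le X^{1-\varepsilon}$, i.e. for $k\le c\log X/\log y$.

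The third step converts moments into tails via the Laplace--transform / saddle--point dictionary of \cite{GS}. For the \emph{upper} tail one uses the matched moments, together with Theorem \ref{thm-RV}, to transfer $\Phi(\tau)$ to $\phi_X(\tau)$: the saddle point in $\log\mathbb{E}[\exp(s\re\log L(1,\mathbb{X};y))]$ associated to the event $|L(1,\mathbb{X})|>e^{\gamma}\tau$ sits at $s(\tau)\asymp e^{\tau}/\tau$, and moment agreement persists as long as $s(\tau)\log y\ll\log X$, which, with the choice $y\asymp \log X/(\log_2 X)^{2}$, becomes exactly $\tau\le \log_2 X-\log_3 X-\log_4 X-O(1)$; the truncation and saddle--point approximation contribute an error of size $O(\tau^{-1/2})$ inside the doubly--exponential formula. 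For the \emph{lower} tail the analogous argument applied to $|L(1,\mathbb{X};y)|^{-2k}$ produces small values only by forcing $\mathbb{X}(p)\in\{\omega_3,\omega_3^2\}$ at many small primes, and the relevant saddle point sits at $s(\tau)\asymp e^{\tau^{2}}/\tau^{2}$; the admissibility range $s(\tau)\log y\ll\log X$ then reads $\tau^{2}\le \log_2 X-\log_3 X-\log_4 X-O(1)$, i.e. the square--root threshold stated in the theorem.

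The main obstacle I expect is the moment--matching step: carrying out cubic orthogonality for primitive characters over $\Q$ (as opposed to over $\Q(\sqrt{-3})$, handled in \cite{AkbaryHamieh,AkbaryHamieh2}) requires a careful sieve for primitivity and an accurate count of $\mathcal{F}_3(X)$, because the family is parametrised only indirectly through cyclotomic descent from $\Q(\omega_3)$. A secondary, but delicate, point is making the saddle--point asymptotic from Theorem \ref{thm-RV} robust enough that the $O(\tau^{-1/2})$ error in Theorem \ref{theorem-relating-to-RV} is uniform in both tails; in particular the asymmetric thresholds $\tau\le\log_2 X-\cdots$ versus $\tau\le\sqrt{\log_2 X-\cdots}$ are forced, respectively, by the $e^{\tau}$ and $e^{\tau^{2}}$ appearing in the doubly--exponential rates, so the two tails must be treated in parallel but with separate size--of--$k$ budgets.
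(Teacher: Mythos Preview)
Your proposal has a genuine gap: Theorem~\ref{theorem-relating-to-RV} is \emph{unconditional}, but your Step~1 (passing from $L(1,\chi)$ to the short Euler product $L(1,\chi;y)$) explicitly invokes the GRH bounds of Theorem~\ref{thm:littlewood}. With that input your argument would only prove the conditional Theorem~\ref{main distribution result under GRH}, and indeed the scheme you outline --- truncate to $L(1,\chi;y)$, match moments of the truncated product, then do saddle-point --- is essentially the paper's proof of that conditional result (Section~\ref{sec:1.4}, via Proposition~\ref{l function via SEP} and Theorem~\ref{asymp moment under GRH}).

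For the unconditional theorem the paper takes a different route and never reduces to short Euler products. Instead it proves Theorem~\ref{Asympformula}: the complex moments of the \emph{full} $|L(1,\chi)|$ match $\mathbb{E}(|L(1,\mathbb X)|^{2z})$ uniformly for $|z|\le \tfrac{\log X}{16\log_2 X\,\log_3 X}$. The key unconditional input is Montgomery's zero-density estimate (Lemma~\ref{zerodensMont}), used to discard the small set of $\chi$ with a zero near $s=1$; on the complement, Lemma~\ref{DL-momentsprop} gives a Dirichlet-series approximation to $|L(1,\chi)|^{2z}$, and the cubic character-sum bound of Lemma~\ref{Lem41GS} handles the non-cube terms. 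This is why the unconditional range of $|z|$ carries an extra $\log_3 X$ in the denominator compared with what a naive truncation would suggest, and that in turn is what produces the stated range $\tau\le \log_2 X-\log_3 X-\log_4 X-2$. The conversion from moments to $\phi_X(\tau)$ is then done directly via the Mellin identity $2r\int_0^\infty t^{2r-1}\phi_X(t)\,dt = e^{-2\gamma r}\,\mathbb{E}(|L(1,\chi)|^{2r})$ and a localisation of the integral to $[\tau-\delta,\tau+\delta]$ with $\log r=\tau-C_{\max}$ and $\delta\asymp(\log r)^{-1/2}$, which is a concrete implementation of the saddle-point idea you describe.
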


If we compare Theorem \ref{theorem-relating-to-RV} with the other distribution and extreme values results found in the literature, for families of quadratic characters \cite{GS, DL, lumley-1line}, for $|\zeta(1+it)|$ \cite{GS06}, for all characters modulo $q$ \cite{LLS}, or for $L$-functions of automorphic forms on $\mathrm{GL}_n$ \cite{Lumley-edge}, we see that the probability distributions for cubic characters are different,
as $\phi_X(\tau)$ and $\psi_X(\tau)$ decay with different rates. Since $\psi_X(\tau)$ decays faster, the small values are less probable, which is reflected in the $O$-results and $\Omega$-results for small values of cubic characters, in Proposition \ref{thm:littlewood} and Theorem \ref{maxmin value under GRH}. The constants $C_{\max}$ and $C_{\min}$ are also different than the constant $C_1$ which appears consistently in the articles on quadratic families mentioned above.

We remark that the ranges of uniformity for $\phi_X(\tau)$ and  $\psi_X(\tau)$ in Theorem \ref{theorem-relating-to-RV} are comparable to \cite[Theorem 1]{GS}, taking into account the increased rate of decay of Theorem \ref{thm-RV} in the case of $\Psi(\tau)$.

Theorem  \ref{theorem-relating-to-RV} can be improved under GRH.
	\begin{thm}\label{main distribution result under GRH} Assume GRH for Hecke $L$-functions over $\Q(\omega_3)$.
	Let $X$ be large and let $e^{10}\leq A \leq (\log_2 X)^C$ be a real number. Then, uniformly in the range $\tau \leq  \log_2 X+\log_3 X -\log_2 A-37+C_{\max}+o(1)$, we have
    \begin{align*}
    \phi_X(\tau)=\exp\left(-\frac{2e^{\tau-C_{\max}}}{ \tau}\left(1+O(\tau^{-1/2}+A^{-1})\right)\right)
    \end{align*}
    and, uniformly in the range $\tau \leq  \sqrt{\log_2 X+\log_3 X -\log_2 A-37+C_{\min}+o(1)}$, we have
    \[
    \psi_X(\tau)=\exp\left(-\frac{e^{\tau^2-C_{\min}}}{ \tau^2}\left(1+O(\tau^{-1/2}+A^{-1})\right)\right),
    \]
    where $C_{\max}$ and $C_{\min}$ are defined in \eqref{constantC0} and \eqref{minimum value} respectively.
	\end{thm}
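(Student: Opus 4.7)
The plan is to parallel the proof of Theorem \ref{theorem-relating-to-RV}, but with two key substitutions: we replace $L(1,\chi)$ by the short Euler product $L(1,\chi;y)$ with $y=B\log X\log_2 X$, and we use the GRH-conditional moment formula of Theorem \ref{asymp moment under GRH} in place of the unconditional Theorem \ref{Asympformula}. The first move is justified because under GRH the tail sum $\sum_{p>y}\chi(p)/p$ is essentially bounded by $1/\log B$ for all but a negligible proportion of $\chi\in\mathcal{F}_3(X)$, so $|L(1,\chi)|$ and $|L(1,\chi;y)|$ agree up to a multiplicative factor $1+O(1/\log B)$ off an exceptional set whose density is much smaller than the main term in Theorem \ref{thm-RV}. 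Setting $A\asymp B$, this only shifts $\tau$ by $O(A^{-1})$, which is absorbed into the stated error.

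Next I would recover tail probabilities from moments by the standard saddle-point/Laplace inversion scheme. For the upper tail, writing $\phi_{X,y}(\tau):=\P(|L(1,\chi;y)|>e^\gamma\tau)$ and using Markov's inequality together with its reverse via a suitable test function, one obtains, for a real parameter $k>0$,
\begin{equation*}
\phi_{X,y}(\tau) = (e^\gamma\tau)^{-2k}\,\frac{1}{|\mathcal{F}_3(X)|}\sum_{\chi\in\mathcal{F}_3(X)}|L(1,\chi;y)|^{2k}\cdot(1+o(1)),
\end{equation*}
where the optimal $k=k(\tau)$ is chosen as the saddle point that minimizes the right-hand side. Substituting Theorem \ref{asymp moment under GRH} replaces this by $(e^\gamma\tau)^{-2k}\,\E(|L(1,\X;y)|^{2k})(1+O(X^{-23/100}))$, and the same saddle-point analysis that produced $\Phi(\tau)$ in Theorem \ref{thm-RV} then gives $\phi_X(\tau)=\exp\!\bigl(-2e^{\tau-C_{\max}}\tau^{-1}(1+O(\tau^{-1/2}+A^{-1}))\bigr)$, provided $y$ is large enough that the truncated random Euler product $\E(|L(1,\X;y)|^{2k})$ still matches the full one at the saddle. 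An identical procedure with negative real $z$, replacing $(e^\gamma\tau)^{-2k}$ by $((\zeta(3)/e^\gamma)^{1/2}\tau^{-1})^{-2k}$ and using the fact (from the proof of Theorem \ref{thm-RV}) that the negative moments grow like $\exp(k\log^2 k+\cdots)$ rather than $\exp(2k\log_2 k+\cdots)$, yields the lower-tail statement.

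The admissible range of $\tau$ is dictated by the constraint that the saddle $k(\tau)$ lie inside the range $|z|\le\frac{\log X\log_2 X}{e^{37}\log B}$ of Theorem \ref{asymp moment under GRH}. For the upper tail the saddle satisfies $k(\tau)\sim e^{\tau-C_{\max}}$, so the requirement becomes
\begin{equation*}
e^{\tau-C_{\max}}\le\frac{\log X\log_2 X}{e^{37}\log B},
\end{equation*}
which, with $A\asymp B$ and after taking logarithms, is exactly $\tau\le\log_2 X+\log_3 X-\log_2 A-37+C_{\max}+o(1)$. For the lower tail the saddle grows like $k(\tau)\sim e^{\tau^2-C_{\min}}$, giving the analogous range with $\tau^2$ in place of $\tau$ and $C_{\min}$ in place of $C_{\max}$.

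The main obstacle is the inversion step, specifically making sure that the saddle-point asymptotics extracted from $\E(|L(1,\X;y)|^{2z})$ agree with those for $\E(|L(1,\X)|^{2z})$ at the saddle with an acceptable relative error. One must verify that at $|z|=k(\tau)$ the primes in the range $(y,\infty)$ contribute negligibly to the model moment, which is the analogue of the GRH-truncation estimate and forces the choice $y=B\log X\log_2 X$ with $B$ large. Secondarily, the asymmetry between the two tails of $L(1,\X)$ (inherited from the asymmetric choice of $\X(p)$ at split primes $p\equiv 1\pmod 3$) means the negative-moment analysis is genuinely different from the positive-moment one; the bookkeeping of the combined error $O(\tau^{-1/2}+A^{-1})$ must be kept uniform in $\tau$ throughout, so that the intermediate bounds do not accidentally degrade to $O(1)$ near the edge of the admissible range.
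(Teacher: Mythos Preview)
Your proposal follows essentially the same route as the paper: pass to the short Euler product $L(1,\chi;y)$ via a GRH truncation estimate (Proposition~\ref{l function via SEP} in the paper), feed in the GRH moment formula of Theorem~\ref{asymp moment under GRH}, and rerun the moment-to-tail inversion already carried out for Theorem~\ref{theorem-relating-to-RV}. The only calibrations to watch are that the paper takes $B=A^4$ (not $A\asymp B$) so that the two occurrences of $y$ coincide, and the truncation error there is $O(1/(A\log_2 X))$ rather than $O(1/\log B)$; your side remark attributing the upper/lower tail asymmetry to the split-prime probabilities is also off---it stems from $\max_k\re(\omega_3^k)=1$ versus $\min_k\re(\omega_3^k)=-\tfrac12$---but this does not affect the argument.
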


The above result gives an asymptotic formula if $A=A(X)$ is chosen as an arbitrary function of $X$ such that $A(X)\rightarrow \infty$ as $X \rightarrow \infty$.

We now move to the $\Omega$-results. 

\begin{thm}[Detecting maximum and minimum values]\label{maxmin value under GRH}
	 Assume GRH for Hecke $L$-functions over $\Q(\omega_3)$.  For  $X$ large, there are $\gg X^{\frac{1}{2}}$ cubic characters with prime conductor bounded by $X$ such that
	 \begin{equation} \label{eq:maximum value under GRH}
	 |L(1, \chi)|\geq e^{\gamma}\left(\log_2 X+\log_3 X-\log(2\log 3)+o(1)\right).   
	 \end{equation}
	 For $X$ large, there are $\gg X^{\frac{1}{2}}$ cubic characters  with prime conductor bounded by $X$ such that
	\begin{equation}	\label {eq:minimum value under GRH}
	|L(1, \chi)|\leq \frac{\zeta(3)^{\frac12}}{(e^{\gamma}\log_2 X+\log_3 X-\log(2\log 3)+o(1))^{\frac12}}.
	\end{equation}
\end{thm}
Other $\Omega$-results for the large values of characters of order $3$ can be found in \cite[Theorem 2.1]{Lam2017}, where the
proportion of characters is $X^{1-\epsilon}$, but the lower bound is smaller, containing only the first term $\log_2{X}$. We are able to get an additional $\log_3{X}$ because we are using the distribution. We are then led to the following conjecture  about the precise size of the maximal and minimal order for $L$-functions associated with cubic characters, which is the equivalent of a conjecture of \cite{GS} for quadratic characters, refining the  conjectures of \cite{MV}  \cite[Conjecture 2]{GS}. A similar conjecture for the large values of $|\zeta( 1 + it) |$ can be found in \cite{GS06}. For characters of order $\ell \geq 3$, a conjecture for the leading term  $\log_2X$ for the large values can be found in
\cite{Lam2017}.
 
\begin{conj}\label{precise-min-max} Let $\cF_3(X)$ be the family of primitive cubic characters over $\Q$ with conductor bounded by $X$. Then, 
\[\max_{ \chi \in \mathcal{F}_3(X)}|L(1,\chi)|=e^{\gamma}(\log_2X+\log_3X+C_{\max}-\log 2+o(1)),
\]
and 
\[\min_{\chi\in\mathcal{F}_3(X)}|L(1,\chi)|=\left(\frac{\zeta(3)}{e^{\gamma}(\log_2 X+\log_3 X+C_{\min}+o(1))}\right)^{\frac{1}{2}}, 
 \]
where $C_{\max}$ and $C_{\min}$ are as in Theorem \ref{thm-RV}.
\end{conj}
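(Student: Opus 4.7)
The plan is to derive the two conjectured asymptotics by locating the thresholds at which the tail probabilities $\phi_X(\tau)$ and $\psi_X(\tau)$ drop to $1/|\mathcal{F}_3(X)|$, i.e.\ to the scale where we expect exactly one character in the family to lie beyond the cutoff. This is the same heuristic that underlies the corresponding conjecture of Granville and Soundararajan for quadratic characters, and it would become a theorem if one could show that the distributional asymptotics of Theorem \ref{main distribution result under GRH} persist all the way to that edge. Since $|\mathcal{F}_3(X)|$ grows linearly in $X$, the logarithm of its reciprocal is $\log X + O(1)$, which is what enters the calibration below.

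For the upper extreme, setting $\phi_X(\tau) \sim 1/|\mathcal{F}_3(X)|$ using the asymptotic of Theorem \ref{thm-RV} gives
\begin{equation*}
\frac{2\, e^{\tau - C_{\max}}}{\tau} = \log X + O(1).
\end{equation*}
Taking logarithms, $\tau = \log_2 X + \log \tau + C_{\max} - \log 2 + o(1)$; since $\tau \sim \log_2 X$ at the relevant scale, $\log \tau = \log_3 X + o(1)$, and iterating yields $\tau = \log_2 X + \log_3 X + C_{\max} - \log 2 + o(1)$, so that $e^\gamma\tau$ matches the conjectured maximum. For the lower extreme, setting $\psi_X(\tau)\sim 1/|\mathcal{F}_3(X)|$ gives
\begin{equation*}
\frac{e^{\tau^2 - C_{\min}}}{\tau^2} = \log X + O(1),
\end{equation*}
from which $\tau^2 = \log_2 X + 2\log \tau + C_{\min} + o(1)$; here $\tau \sim (\log_2 X)^{1/2}$, so $2\log \tau = \log_3 X + o(1)$, and one obtains $\tau^2 = \log_2 X + \log_3 X + C_{\min} + o(1)$. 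Substituting into the threshold $(\zeta(3)/e^\gamma)^{1/2}/\tau$ produces the conjectured minimum.

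The main obstacle, and precisely the reason this statement is only a conjecture, is that Theorem \ref{main distribution result under GRH} holds only up to $\tau \le \log_2 X + \log_3 X + C_{\max} - 37 - \log_2 A + o(1)$, falling short of the required threshold by a fixed gap of roughly $37 + \log_2 A - \log 2$. Closing this gap would require extending the moment asymptotic of Theorem \ref{asymp moment under GRH} to larger $|z|$, ideally up to the natural barrier $|z| \asymp \log X / \log_2 X$. This is the direct analogue of the barrier encountered in the quadratic setting of \cite{GS}, and in the cubic case it appears even harder: the best available inputs for cubic character sums rely on working over $\Q(\omega_3)$, where a suitable analogue of the Graham–Ringrose bound is not yet known. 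A matching $\Omega$-bound would also need to be proved, sharpening the resonator-type arguments behind Theorems \ref{maximum value under GRH} and \ref{minimum value under GRH} so as to capture not only the $\log_3 X$ refinement but also the additive constants $C_{\max}$ and $C_{\min}$ coming from the random Euler product model.
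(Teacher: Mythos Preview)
Your heuristic justification is correct and follows essentially the same approach as the paper: both locate the threshold $\tau$ at which the tail probability $\phi_X(\tau)$ (resp.\ $\psi_X(\tau)$) drops to order $1/|\mathcal{F}_3(X)|\asymp 1/X$, assuming the distributional asymptotics extend that far, and both identify the resulting gap in the admissible range of $\tau$ as the reason this remains a conjecture. The paper phrases the calibration slightly differently---plugging in $\tau_{\max}=\log_2X+\log_3X+C_{\max}-\log 2+\delta$ and observing that $\phi_X(\tau_{\max})\le X^{-e^\delta}(1+o(1))<1/X$ for $\delta>0$---but this is equivalent to your derivation.
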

We reach this conjecture by assuming that the range for the upper bound (respectively lower bound) in Theorem \ref{main distribution result under GRH} can be sufficiently extended so that one can replace the value of $\tau$ in the expression for $\phi_X(\tau)$ (resp.~$\psi_X(\tau)$) by  $\tau_{\max}=\log_2X+\log_3X+C_{\max}-\log 2+\delta$   (resp.~$\tau_{\min}=\sqrt{\log_2 X+\log_3 X+C_{\min}+\delta}$) for some constant $\delta\geq 0$. This results in
    \[  \phi_X(\tau_{\max})\leq X^{-e^\delta} \left( 1 + o(1) \right) , \qquad \psi_X(\tau_{\min})\leq X^{-e^\delta}  \left( 1 + o(1) \right) .\]
Since $X^{-e^\delta}<\frac{1}{X}$ for $\delta>0$ and the total number of characters is of order $X$, this suggests that we do not cross the barrier of $\delta>0$ when considering $\max_{ \chi \in \mathcal{F}_3(X)}|L(1,\chi)|$ and $\min_{\chi\in\mathcal{F}_3(X)}|L(1,\chi)|$. 

Furthermore, Theorem \ref{maxmin value under GRH} comes (under GRH) very close to exhibiting $X^{\frac{1}{2}}$ characters such that $|L(1,\chi)|$ is close to the extreme values predicted by Conjecture \ref{precise-min-max}, since  $-\log{(2 \log 3)} = -0.78719 \dots$ and $C_{\max}-\log 2 =0.29412\dots$. The same observation (with the appropriate modifications) holds for the small values in  Theorem \ref{maxmin value under GRH}.

Notice that our results in Theorem \ref{main distribution result under GRH} prove (under GRH) some conjectures that are cubic analogues to those of Montgomery and Vaughan \cite{MV} (more precisely,  Theorem \ref{main distribution result under GRH} implies an analogue of Conjecture 1 and the upper bound on Conjecture 2 in the notation of \cite{GS}). Namely, using the maximal allowable value $\tau$ in Theorem \ref{main distribution result under GRH} implies that 
the proportion of characters $\chi \in \cF_3(X)$ such that
\begin{equation*} 
|L(1,\chi)| > e^{\gamma}\left( \log_2X+\log_3X - \log_2 A - 37 + C_{\max} + o(1) \right) 
\end{equation*}
 is 
$$
\exp \left( - \frac{2 e^{\log_2X+\log_3X - \log_2 A - 37 + o(1)}}{\log_2X} \right).$$
If we replace by  $\tau =\log_2 X$ (which is in the range of Theorem \ref{main distribution result under GRH}), we get that the proportion of  characters $\chi \in \cF_3(X)$  such that $|L(1,\chi)| > e^{\gamma} \log_2 X$ is 
both  $>\exp \left( - C\frac{ \log{X} }{\log_2 X} \right)$ and $<\exp \left( - c\frac{ \log{X} }{\log_2 X} \right)$ for some appropriate constants $0<c<C<\infty$, thus leading (under GRH) to a cubic analogue of Conjecture 1 in \cite{GS}.

Similarly, if we take $\tau=\log_2X+\log_3X - \log_2 A - 37 + C_{\max}$ with $A$ to be a very large constant (that is, sufficiently large to absorb the constant in the error term, but still a constant) then we  conclude that the proportion of  characters $\chi \in \cF_3(X)$  such that $|L(1,\chi)| > e^{\gamma} (\log_2 X+\log_3 X)$ is bounded above by $X^\Theta$ with some $-1<\Theta<0$, and this leads to the upper bound of the corresponding cubic  analogue to Conjecture 2 in \cite{GS}.

Less
attention has been paid to the  small values in the literature, maybe because the arguments are completely symmetric to the large values for quadratic characters. However, as we have observed in Proposition \ref{thm:littlewood} and Theorems  \ref{thm-RV}, \ref{theorem-relating-to-RV}, \ref{main distribution result under GRH}, and \ref{maxmin value under GRH}, we see an asymmetric behaviour for the family of cubic characters. 
Of course, all these results are related: the minimum value of $|L(1,\chi)|$ for cubic characters is much larger than what can be found for the quadratic case, which fits perfectly the faster decay for the occurrence of small values of Theorem \ref{theorem-relating-to-RV}.

There have been several papers working toward proving statements similar to  Conjecture \ref{precise-min-max}
for different families of $L$-functions,  including the work of Bondarenko and Seip \cite{BS2017} (for the Riemann zeta function on the critical line), Aistleitner et al. \cite{AMM2019} (for the Riemann zeta function on the $1$-line) and Aistleitner et al. \cite{AMMP2019} (for $L(1,\chi)$ with $\chi$ taken over the family of primitive characters modulo $q$), making use of a resonator method.
Again, these results just fall short of confirming the conjectures. 

\subsection{Generalizations to any prime order $\ell$}

Since the distribution of extreme values, and in particular of small values, is very different for the family of cubic characters than other families studied in the literature (as quadratic characters or all character of conductor $q$), it is interesting to speculate on what would happen for characters of prime order $\ell > 3$. One can see how most of the results of the present paper could be considered in that context, first by generalizing the sieve of Lemma 
\ref{Lem41GS} to start the computations, which is not trivial, and which would now take place over $\Q(\omega_\ell)$, creating obvious complications (for example, this is not in general a principal ideal domain).  Granville and Lamzouri \cite{GranvilleLamzouri} developed a model which can be applied to number theoretic questions about large values of different families of $L$-functions that can be modelled by almost independent random variables. This model could potentially be applied to describe the distribution of large values of $|L(1, \chi)|$ where $\chi$ is a character of order $\ell$.  We hope to address some of these questions in the future. As a starting point, some of the results of this paper can be easily generalized from cubic to characters of order $\ell$, as the $O$-results of Littlewood
and the $\Omega$-results. 
We will prove these statements in Section \ref{sec:omega}.

\begin{prop}\label{thm:littlewood-gen}  Let $\ell$ be an odd prime and $\chi$ be a character of order $\ell$ and  conductor $q$. Assume GRH for $L(s,\chi)$. Then we have 
 \begin{equation*}
 \frac{C_\ell}{\left(2e^{\gamma}\log_2 q\right)^{\cos\left(\frac{\pi}{\ell}\right)}}(1+o(1)) 
\leq |L(1,\chi)| \leq  2e^\gamma \log_2 q (1+o(1)),\end{equation*}
 where 
 \begin{align} \label{def-Cell}
 C_\ell=\prod_{p}\left[\left(1-\frac{1}{p}\right)^{-\cos\left(\frac{\pi}{\ell}\right)}\left(1+\frac{2\cos\left(\frac{\pi}{\ell}\right)}{p}+\frac{1}{p^2}\right)^{-1/2}\right].\end{align}
\end{prop}

\begin{rem}
 For $\ell=3$, we have $C_3=\zeta(3)^{\frac{1}{2}}$, and the bounds
  \[\left(\frac{\zeta(3)}{2e^{\gamma}\log_2 q}\right)^{\frac{1}{2}} \;(1+o(1)) 
\leq |L(1,\chi)| \leq  2e^\gamma \log_2 q (1+o(1)).\]
In addition, $\lim_{\ell \rightarrow \infty} C_\ell=\zeta(2)$, approaching the original bound of Littlewood. 
\end{rem}

\begin{rem} The fact that large values of Dirichlet characters $L$-functions at $s=1$ behave differently if the character is quadratic or of odd order was also noticed in 
\cite{GS-JAMS}, where the authors show that the P\'olya-Vinogradov bound can be improved for odd order characters. In \cite[Theorem 1]{GS-JAMS}, the authors prove that if
$\chi$ is a primitive character modulo $q$ of odd order $g$, then 
\begin{align*}
\max_{x} \left|\sum_{n\leq x}\chi(n) \right| \ll_g\sqrt{q}(\log q)^{1-\delta_g+o(1)}  \text{ with } \delta_g=1-\frac{g}{\pi}\sin\frac{\pi}g.
\end{align*}
The exponent approaches $1$ at a rate of $O(1/g^2)$ as $g$ grows (which is the P\'olya--Vinogradov bound for quadratic characters as $g$ grows),
at the same rate that $C_\ell$ is approaching the lower bound for quadratic characters. 
\end{rem}

 \begin{thm}[Detecting minimum value]\label{minimum value under GRH-gen} Let $\ell$ be a prime.  Assume GRH for Hecke $L$-functions over $\Q(\omega_3)$.
	 For  all large $X$, there are $\gg X^{\frac{1}{2}}$  characters  of order $\ell$ with prime conductor bounded by $X$ such that
	\begin{equation*}	
	|L(1, \chi)|\leq \frac{C_\ell}{(e^{\gamma}\log_2 X+\log_3 X-\log(2\log \ell )+o(1))^{\cos \left(\frac{\pi}{\ell}\right)}},
	\end{equation*}
	where $C_\ell$ is given by \eqref{def-Cell}.
\end{thm}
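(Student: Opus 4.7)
The plan is to extend the Chowla-type construction used for Theorem \ref{minimum value under GRH} to general prime order $\ell$. The goal is to exhibit $\gg X^{1/2}$ primitive characters $\chi$ of order $\ell$ with prime conductor $q \le X$ such that, at every prime $p$ below a carefully chosen threshold $y$, the value $\chi(p)$ is one of the two $\ell$th roots of unity closest to $-1$, namely $e^{\pm i\pi(\ell-1)/\ell}$ of real part $-\cos(\pi/\ell)$. Selecting these ``bad'' values at small primes makes each Euler factor $(1-\chi(p)/p)^{-1}$ as small as the $\ell$th-order constraint allows, and this produces the exponent $\cos(\pi/\ell)$ in the claimed bound.

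Under GRH, a truncation argument as in the proof of Theorem \ref{thm:littlewood-gen} gives, uniformly for characters $\chi$ of order $\ell$ with conductor $q \le X$,
\[
\log L(1,\chi) = \sum_{p \le y} \log\!\Bigl(1-\frac{\chi(p)}{p}\Bigr)^{-1} + o(1).
\]
I would take $y = \log X \cdot \log_2 X/(2 \log \ell)$, so that $\log_2 y = \log(\log_2 X + \log_3 X - \log(2\log \ell)) + o(1)$. For the constructed $\chi$, separating the $k=1$ term from the higher-order terms $\sum_{k \ge 2}\chi(p)^k/(kp^k)$, taking real parts, and applying Mertens's theorem yields
\[
\log|L(1,\chi)| = -\cos(\pi/\ell)\bigl(\gamma + \log_2 y\bigr) + \log C_\ell + o(1),
\]
where the Euler product $C_\ell$ in \eqref{def-Cell} precisely assembles from the Mertens constant, the $k \ge 2$ contributions evaluated at the two bad values, and the local factors distinguishing primes $p \equiv 1 \pmod \ell$ from $p \not\equiv 1 \pmod \ell$ (as in the distributions \eqref{eq:Xp1}, \eqref{eq:Xp2}). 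Exponentiating gives the desired bound.

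For the count, I would use $\ell$th power reciprocity to translate the local conditions into global splitting conditions. For a prime conductor $q \equiv 1 \pmod \ell$, each symbol $\chi_q(p) = (p/q)_\ell$ is governed by the Frobenius of $q$ in the Kummer extension $K_y := \Q(\omega_\ell,\, p_1^{1/\ell},\dots,p_{\pi(y)}^{1/\ell})$. By Kummer theory, $\Gal(K_y/\Q(\omega_\ell)) \simeq (\Z/\ell\Z)^{\pi(y)}$, so prescribing one of the two bad values at each $p_i \le y$ selects a union of conjugacy classes of density $(2/\ell)^{\pi(y)}$. An effective Chebotarev density theorem, valid under GRH for $\zeta_{K_y}$, gives at least
\[
\gg \frac{X}{\log X}\Bigl(\frac{2}{\ell}\Bigr)^{\pi(y)} \;=\; \frac{X^{\,1/2 + \log 2/(2\log\ell)}}{\log X} \;\gg\; X^{1/2}
\]
qualifying prime conductors $q \le X$, since $\pi(y) = (1+o(1))\log X/(2\log \ell)$. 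The various $o(1)$ terms from the sieve combine with the estimate above to yield the stated upper bound on $|L(1,\chi)|$ with the $(1+o(1))$ factor.

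The main technical obstacle is the $\ell$th power reciprocity for $\ell > 3$: unlike cubic reciprocity this is not elementary, requiring careful tracking of residue symbols attached to primary elements in $\Z[\omega_\ell]$, and one must verify that the discriminant of the compositum $K_y$ stays small enough (polynomial in $\prod_{p\le y} p$) so that the effective Chebotarev error is absorbed by the leading PNT term. The rest is a bookkeeping exercise parallel to the cubic case, with the two families of primes $p \equiv 1$ and $p \not\equiv 1 \pmod \ell$ contributing the distinct local Euler factors that together build $C_\ell$.
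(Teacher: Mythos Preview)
Your strategy is different from the paper's and contains a genuine gap at the truncation step. You claim that under GRH, uniformly for $\chi$ of conductor $q\le X$,
\[
\log L(1,\chi)=\sum_{p\le y}\log\Bigl(1-\frac{\chi(p)}{p}\Bigr)^{-1}+o(1)
\]
with $y=\log X\,\log_2 X/(2\log\ell)$. But the Littlewood approximation you cite (Lemma~7 of \cite{Littlewood-1928}, used in Theorem~\ref{thm:littlewood-gen}) requires $y\gg(\log q)^{2+\varepsilon}$ to make the error $o(1)$; for an \emph{individual} character the GRH bound $\sum_{p>y}\chi(p)/p\ll(\log q)^2/y^{1/2}$ is useless at $y\asymp\log q\,\log_2 q$. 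If instead you take $y\asymp(\log X)^{2+\varepsilon}$ to validate the truncation, then $\pi(y)\asymp(\log X)^{2+\varepsilon}/\log_2 X$ and the Chebotarev density $(2/\ell)^{\pi(y)}$ becomes smaller than any power of $X^{-1}$, so no characters survive. The two constraints---$y$ small enough to count, $y$ large enough to approximate---are incompatible in your scheme. (The ``for all but $X^{14/15}$'' result of Proposition~\ref{l function via SEP} does not rescue this either, since for small $\ell$ your constructed set has fewer than $X^{14/15}$ elements.)

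The paper avoids this tension entirely: it never approximates individual $L(1,\chi)$ by a short product. Instead it fixes a \emph{single} value $\epsilon_p=\omega_\ell^{(\ell-1)/2}$ at each $p\le z$ (not both conjugate values), and computes the first moment
\[
\sum_{\chi\in\mathcal P_\ell(X,\{\epsilon_p\},z)}\frac{1}{L(1,\chi)}\log\mathrm{cond}(\chi)
\]
directly, by expanding $1/L(1,\chi)=\sum_n\mu(n)\chi(n)/n$, truncating at a large $N$, and using GRH only through the prime-counting estimate for the Hecke character $\mathfrak q\mapsto(nm/\mathfrak q)_\ell$ (Proposition~\ref{main prop for small values}). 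Triangle inequality plus a pigeonhole then produces $\gg X^{1/2}$ characters with $1/|L(1,\chi)|\ge\prod_{p\le z}|1-\omega_\ell^{(\ell-1)/2}/p|$, and Mertens gives exactly the constant $C_\ell$. Note also that $C_\ell$ is a product over \emph{all} primes with no dependence on $p\bmod\ell$; the local factors \eqref{eq:Xp1}--\eqref{eq:Xp2} from the random model play no role here.
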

The $\Omega$-results are obtained  by following a pretentious approach of Granville and Soundararajan, forcing the characters $\chi$ to minimize the value of $\re(\chi)$. More precisely, we count the characters $\chi$ for which $\chi(p)=e^{\frac{(\ell-1)\pi i}{\ell}}$ or $e^{\frac{(\ell+1)\pi i}{\ell}}$ for the primes $p$ essentially below $\log X \log_2X$. Thus, the somewhat surprising appearance of the exponent $\cos \left(\frac{\pi}{\ell}\right)$ is explained by this 
 strategy of minimizing $\re(\chi(p))$.

As in the cubic case, there is a discrepancy between the constants of Proposition \ref{thm:littlewood-gen} (which is $\left(1/2\right)^{\cos(\pi/\ell)}$) and Theorem \ref{minimum value under GRH-gen} (which is 1). Interestingly, 
 $\left(1/2\right)^{\cos(\pi/\ell)} \rightarrow 1/2$,  which is the discrepancy found for quadratic $L$-functions.
 We could speculate that also in this case, the true nature of the extreme values is given by the minimum values of Theorem \ref{minimum value under GRH-gen}  and not the $O$-results,
 and
 \[
	\min_{\chi\in\mathcal{F}_{\ell}(X)}|L(1,\chi)|=\frac{C_{\ell}}{\left(e^{\gamma}(\log_2 X+\log_3 X+C_{\text{min}, \ell}+o(1))\right)^{\cos\left(\frac{\pi}{\ell}\right)}}, 
		\] 
	where  $C_\ell$ is given by \eqref{def-Cell} and $C_{\text{min}, \ell}$ is a constant that can be explicitly determined from the corresponding random model.
 Computing the distribution would shed more light on this question.
We expect the distribution function in the lower bound $\psi_X$ for general $\ell$ to take the shape
\[ 
    \psi_X(\tau)=\exp\bigg(-\frac{e^{\tau^{a(\ell)}-C_{\text{min},\ell}}}{ \tau^{a(\ell)}}\left(1+o(1)\right)\bigg).
    \]
where $a(\ell) = \left( {\cos\left(\frac{\pi}{\ell}\right)} \right)^{-1}$.

The average over the family of Section \ref{Section2}, which is needed to determine the random variables $\X(p)$, can also be done easily for the general family $\cF_\ell$ of characters of order $\ell$, which we will do.

\subsection{Organization of the paper}

This article is organized as follows.  In Section \ref{section-average}, we define the family of cubic characters, present the relevant background, and compute the average character value over the family. This leads to the definition of the random variables \eqref{eq:Xp2} and \eqref{eq:Xp1}. In Section \ref{section-largemoments}, we prove Theorem \ref {Asympformula}, estimating the complex moments in our family with the moments of the random variables. In Section \ref{working with the RV}, we prove Theorems \ref{thm-RV} and  \ref{theorem-relating-to-RV}. We consider $\Omega$-results in Section \ref{sec:omega}, where we prove Eq.~\eqref{eq:maximum value under GRH} in Theorem \ref{maxmin value under GRH},
Propositions \ref{thm:littlewood-gen}, and Theorem  \ref{minimum value under GRH-gen} (Eq.~\eqref{eq:minimum value under GRH} in Theorem \ref{maxmin value under GRH} is a particular case of Theorem \ref{minimum value under GRH-gen}).
Finally,  we prove Theorem \ref{main distribution result under GRH} in Section \ref{sec:1.4}.

\section*{Acknowledgements} The authors are grateful to Andrew Granville, Youness Lamzouri and K. Soundararajan for helpful discussions and to the referee for several suggestions and corrections that greatly improved the exposition of the paper. This work is supported by the Natural Sciences and Engineering Research Council of Canada [DG-155635-2019 to CD, DG-355412-2022 to ML, PDF-532937-2019 to AL], the Fonds de recherche du Qu\'ebec - Nature et technologies [Projet de recherche en \'equipe 300951 to CD and ML], the Research Council of Norway [Grant 275113 through the Alain Bensoussan Fellowship Programme of the European Research Consortium for Informatics and Mathematics to PD].

\section{Average over the character family} \label{Section2}

\label{section-average}
Let $\ell$ be a fixed odd prime, and let  $\cF_\ell$ be the set of primitive characters of order $\ell$ over $\Q$ with conductor coprime to $\ell$, i.e.,
\begin{align}  \label{defF} \cF_\ell = \left\{ \chi = \chi_{p_1}^{e_1} \;  \chi_{p_2}^{e_2} \cdots  \chi_{p_s}^{e_s} \;:\; p_i \;\text{distinct primes}, \;p_i \equiv 1 \mod \ell, \; e_i \in \{1, \dots, \ell-1 \}   \right\}, \end{align}
where $\chi_p$ is the $\ell$th residue symbol modulo $p$ defined by
\begin{align*}
\chi_p(\alpha) \equiv \alpha^{(p-1)/\ell} \mod p.
\end{align*}
Since there is no canonical choice of a primitive $\ell$th root of unity modulo $p$ in $\Q$, there is no canonical choice of $\chi_p$ versus any  $\chi_p^h$ with $1< h \leq \ell-1$. 

The conductor of each character $\chi_{p_1}^{e_1} \;  \chi_{p_2}^{e_2}  \cdots  \chi_{p_s}^{e_s}$ in $\cF_\ell$ is $p_1 \cdots p_s$, where the $p_i$ are distinct primes congruent to $1 \mod \ell$, and for each $p_1 \cdots p_s$  there are $(\ell-1)^s$ primitive cubic characters with this conductor. 
We denote by $\cF_\ell(X)$ the subset of $\cF_\ell$ consisting of characters of conductor $\leq X$.
Let $\omega_\ell$ denote a primitive $\ell$th root of unity in $\C$.

\begin{prop}  \label{prop-gives-RVs}
We have
\begin{align*}
\sum_{\chi \in \cF_\ell(X)} 1 &= K_\ell X + O \left( X^{\frac{\ell+2}{\ell+5}+\epsilon}\right), \end{align*}
and more generally for $m  \in \Z, m \geq 1$ and $m$ an $\ell$th power, 
\begin{align*} 
\sum_{\chi \in \cF_\ell(X)} \chi(m) &= K_\ell \prod_{\substack{p \mid m\\ p \equiv 1 \mod \ell}} \left( \frac{p}{p+\ell-1} \right) X + O \left( \ell ^{\omega(m)} X^{\frac{\ell+2}{\ell+5}+\epsilon} \right).
\end{align*}
Here
\begin{align*}
K_\ell = r_\ell F_{\ell,2}(1),
\end{align*}
where $r_\ell$ is the residue of the Dedekind zeta function of the $\ell$th cyclotomic extension $\Q(\omega_\ell)$  at $s=1$ and $F_{\ell,2}(s)$ is given by \eqref{F2}.
The power of $X$ in the error term can be improved to $O\left(\ell^{\omega(m)}X^{\frac{1}{2} + \epsilon}\right)$ assuming the Lindel\"of Hypothesis.
Therefore, when $m$ is an $\ell$th power, 
\begin{align} \label{quotient-family}
 \frac{1}{|\mathcal{F}_\ell(X)|}  \sum_{\chi \in \cF_\ell(X)} \chi(m) &=  \prod_{\substack{p \mid m\\ p \equiv 1 \mod \ell}} \left( \frac{p}{p+\ell-1} \right) + O \left( \ell^{\omega(m)} X^{-\frac{3}{\ell+5}+ \epsilon} \right),
\end{align}
and the error term can be improved to  $O \left( \ell^{\omega(m)} X^{-\frac{1}{2} + \epsilon} \right)$ assuming the Lindel\"of Hypothesis.
\end{prop}

\begin{rem} For the applications in this work, we will specialize to the case $\ell=3$. 
The main term for the first result was proven by Cohn \cite{Cohn} in the cubic case
by using the Dirichlet series
\begin{align*}
 \prod_{p \equiv 1 \mod 3} \left( 1 + 2p^{-s} \right) = \sum_{n =1}^\infty \frac{u(n)}{n^s}, 
\end{align*}
where $u(n)$ is the number of cubic characters of conductor $n$. 
\end{rem}

\begin{proof} 
Let $a_\ell(n)$ be the number of primitive characters of order $\ell$ and conductor $n$. Then,
\begin{align*}
\sum_{n=1}^{\infty} \frac{a_\ell(n)}{n^s} &= \prod_{p \equiv 1 \mod{\ell}} \left( 1 + \frac{\ell-1}{p^s} \right) = 
\prod_{p \equiv 1 \mod{\ell}} \left( 1 - \frac{1}{p^s} \right)^{-(\ell-1)} F_{\ell,1}(s) \\
&= \zeta_K(s) F_{\ell,2}(s),
\end{align*}
where $\zeta_K(s)$ is the Dedekind zeta function of $K=\Q(\omega_\ell)$. In other words, we have
\begin{align*}
\zeta_K(s) = \left( 1 - \frac{1}{\ell^s} \right)^{-1} \prod_{p \neq \ell} \left( 1 - \frac{1}{p^{e_\ell(p) s}} \right)^{- \frac{(\ell-1)}{e_\ell(p)}},
\end{align*}
where $e_\ell(p)$ is the multiplicative order of $p$ modulo $\ell$,
\begin{align*}
F_{\ell,1}(s) &= \prod_{p \equiv 1 \mod{\ell}} \left( 1 + \frac{\ell-1}{p^s} \right)  \left( 1 - \frac{1}{p^s} \right)^{\ell-1},  
\end{align*}
and 
\begin{align}\label{F2}
F_{\ell,2}(s) &= F_{\ell,1}(s) \left( 1 - \frac{1}{\ell^s} \right) \prod_{\substack{p \neq \ell\\ e_\ell(p) > 1}} \left( 1 - \frac{1}{p^{e_\ell(p) s}} \right)^{ \frac{\ell-1}{e_\ell(p)}}.
\end{align}
Notice that $F_{\ell,1}(s)$ and $F_{\ell,2}(s)$ converge absolutely for $\re(s) > \frac12 + \epsilon$.

For any $\frac12 + \epsilon \leq \sigma \leq 1$, we will  use the bound
\begin{align}
 |\zeta_K\left(\sigma+it\right)| &\ll_\epsilon t^{\frac{(\ell-1)(1-\sigma)}{3} +\epsilon} \;\;\;\;\; \text{\cite{HB88}}, \label{eq:HB}\\
|\zeta_K\left(\sigma+it\right)| &\ll_\epsilon t^{\epsilon} \;\;\;\;\; \text{Lindel\"of Hypothesis.} \label{eq:LH}
 \end{align}
We apply Perron's formula
\[\sum_{n \leq X} a_\ell(n)=\frac{1}{2\pi i} \int_{1+\epsilon-iT}^{1+\epsilon+iT} \zeta_K(s) F_{\ell,2}(s) \frac{X^s}{s} ds +O \left( \frac{X^{1+\epsilon}}{T}\right),\] 
and we move the integral to $s=\frac12 + \epsilon$.
Completing the above integral over a rectangle with vertices in $\frac{1}{2}+\epsilon\pm i T$, $1+\epsilon\pm i T$ and applying the bound \eqref{eq:HB}, we have, for  the horizontal integrals, 
\begin{align}
\ll & \frac{1}{T}\int_{\frac{1}{2}+\epsilon}^{1+\epsilon} |\zeta_K(\sigma+iT)| |F_{\ell,2}(\sigma+iT)| X^{\sigma} d\sigma\ll \frac{1}{T} \max_{\frac{1}{2}+\epsilon \leq \sigma \leq 1+\epsilon} T^{\frac{(\ell-1)(1-\sigma)}{3} +\epsilon}  X^\sigma \nonumber \\\ll&  \frac{1}{T}  \left( T^{\frac{\ell-1}{6} +\epsilon}  X^{\frac{1}{2}+\epsilon}+T^{\epsilon}  X\right). \label{eq:horizontal}
\end{align}
For the  integral over $s=\frac{1}{2}+\epsilon$, we have 
\begin{align}
\ll & \int_{-T}^{T} \left|\zeta_K\left(\frac{1}{2}+\epsilon+it\right)\right| \left|F_{\ell,2}\left(\frac{1}{2}+\epsilon+it\right)\right|  X^{\frac{1}{2}+\epsilon} \frac{dt}{\left|\frac{1}{2}+\epsilon+it\right|}\nonumber\\
\ll & T^{\frac{\ell-1}{6}+\epsilon} X^{\frac{1}{2}+\epsilon}.\label{eq:vertical}
\end{align}
Setting $T=X^{\frac{3}{\ell+5}}$, we finally obtain 
\begin{align*}
|\cF_\ell(X)| = r_\ell \, F_{\ell,2}(1) \,X + O \left( X^{\frac{\ell+2}{\ell+5}+\epsilon}\right),
\end{align*}
where $r_\ell$ is the residue of $\zeta_K(s)$ at $s=1$.

If we assume the Lindel\"of Hypothesis, we can use the bound \eqref{eq:LH} instead of \eqref{eq:HB}. Then \eqref{eq:horizontal}  and \eqref{eq:vertical} give a bound of $\ll T^{\epsilon-1} X +T^\epsilon X^{\frac{1}{2}+\epsilon}$. Taking $T=X^\frac{1}{2}$ leads to an error term of $O \left( X^{\frac{1}{2}+\epsilon}\right)$. 

Now suppose that $m$ is a $\ell$-th power. Then,
\begin{align*} 
\sum_{\chi \in \cF_\ell(X)} \chi(m) = \sum_{\substack{\chi \in \cF_\ell(X) \\ \left( \text{cond}(\chi), m \right) =1}} 1 = \sum_{\substack{n \leq X \\ (n, m)=1}} a_\ell(n), 
\end{align*}
and 
\begin{align*}
\sum_{\substack{n=1 \\ (n,m)=1}}^{\infty} \frac{a_\ell(n)}{n^s} = \prod_{\substack{p \equiv 1 \mod{\ell} \\ p \nmid m}} \left( 1 + \frac{\ell-1}{p^s} \right) = 
 \prod_{\substack{p \equiv 1 \mod{\ell} \\ p \mid m}} \left( 1 + \frac{\ell-1}{p^s} \right)^{-1} \zeta_K(s) F_{\ell,2}(s).
\end{align*}
We apply Perron's formula 
\[\sum_{n \leq X} a_\ell(n)=\frac{1}{2\pi i} \int_{1+\epsilon-iT}^{1+\epsilon+iT}  \prod_{\substack{p \equiv 1 \mod{\ell} \\ p \mid m}} \left( 1 + \frac{\ell-1}{p^s} \right)^{-1} \zeta_K(s) F_{\ell, 2}(s) \frac{X^s}{s} ds +O \left( \frac{X^{1+\epsilon}}{T}\right),\] 
and we move the integral to $s=\frac12 + \epsilon$ as before.  The bounds are the same, except that we have the extra factor 
\[\Bigg| \prod_{\substack{p \equiv 1 \mod{\ell} \\ p \mid m}} \left( 1 + \frac{\ell-1}{p^s} \right)^{-1}\Bigg|\ll \ell^{\omega(m)}.\]
Finally, we obtain,
\begin{align*}
\sum_{\chi \in \cF_\ell(X)} \chi(m)  =  \prod_{\substack{p \equiv 1 \mod{\ell} \\ p \mid m}} \left(\frac{p}{p + \ell-1}\right) r_K \, F_{\ell,2}(1) \,X + O \left(\ell^{\omega(m)} X^{\frac{\ell+2}{\ell+5}+\epsilon}\right).
\end{align*}
As before, assuming the Lindel\"of Hypothesis, the error term can be improved to $O \left(\ell^{\omega(m)} X^{\frac{1}{2}+\epsilon}\right)$. 

\end{proof}

Notice that the main term in \eqref{quotient-family} represents the expected value of the random variable $\mathbb{X}(m)$ defined for primes $p=\ell$ and $p\not \equiv 1 \mod \ell$ by   
\begin{equation*}
\mathbb{X}(p)=
                 \omega_\ell^k   \mbox{ with probability} = \textstyle{\frac{1}{\ell}}, \;\mbox{for $0 \leq k \leq \ell-1$},
                               \end{equation*}
                and for $p \equiv 1 \mod \ell$ by 
\begin{equation*}
\mathbb{X}(p)=\begin{cases}
                 0 & \mbox{with probability} = \frac{\ell-1}{p+\ell-1}\\
                   \omega_\ell^k  & \mbox{with probability} = \frac{p}{\ell(p+\ell-1)}, \;\mbox{for $0 \leq k \leq \ell-1$},

                \end{cases}
                \end{equation*}
and extended multiplicatively as  
$$\mathbb{X}(n)=\mathbb{X}(p_1)^{a_1}\mathbb{X}(p_2)^{a_2}\cdots \mathbb{X}(p_k)^{a_k},$$
for $n=p_1^{a_1}p_2^{a_2}\cdots p_k^{a_k}$.
Specializing   \eqref{quotient-family}  to $m=p^\ell$ where $p$ is a prime, we find that $\chi(p^\ell)$ is, on average over $\chi$, asymptotic to $1$ if $ p \not \equiv 1 \mod{\ell}$ and to $p/(p+\ell-1)$ otherwise. Now observe that $\chi(p^\ell)={\bf 1}_{\chi(p)\not = 0}$, so that \eqref{quotient-family}  can be interpreted as ``$\chi(p)$ is zero with probability $1-{\bf 1}_{p\equiv 1\mod{\ell}}(\ell-1)/(p+\ell-1)$''. Our choice of $\mathbb{X}(p)$ is the unique choice taking values in $\{0\}\cup \{\omega_\ell^k\, :\, k\}$ with the property that $\mathbb{X}(p)=0$ happens with the same probabilty as $\chi(p)=0$, and $\PP(\mathbb{X}(p)=\omega_\ell^k)$ is independent of $k$.
This justifies the definition of the random variables. 

We can also give a simple heuristic for the
random variables, independently of Proposition \ref{prop-gives-RVs}. The following argument is also found in  Lemma 8.1 of \cite{BDFL} for the function field case. 

The heuristic is as follows.
From the definition of  $\mathcal{F}_\ell$ given by \eqref{defF}, each primitive character of conductor $n$ (square-free and supported only on primes which are $\equiv 1 \mod \ell$) can be written uniquely as
$\chi_{n_1} \chi^2_{n_2}\cdots \chi_{n_{\ell-1}}^{\ell-1}$ for $n=n_1 n_2\cdots n_{\ell-1}$. Then, to count the characters, we can count the $(\ell-1)$-tuples $(n_1, n_2,\dots,n_{\ell-1})$ (supported on primes which are $\equiv 1 \mod \ell$) which are both square-free and pairwise coprime.

We create a model modulo $p^2$ for a fixed prime $p$. In order  to count the characters, we want to count the $(\ell-1)$-tuples $(n_1, n_2,\dots,n_{\ell-1}) \mod{p^2}$ such that $n_k$ is not equivalent to  $0 \mod{p^2}$  (this models the square-free condition), and $p$ does not simultaneously divide any pair of $n_j$ and $n_k$ (this models the coprimality condition).
Now we focus  on the additional condition that the character is nonzero. For that, we consider $\chi(p^\ell)=\chi_{n_1} (p^\ell) \chi_{n_2}^2 (p^\ell)\cdots \chi_{n_{\ell-1}}^{\ell-1}(p^\ell)$, as this character only takes the values 0 and 1. We need to count the $(\ell-1)$-tuples $(n_1, n_2,\dots,n_{\ell-1}) \mod{p^2}$ such that $n_k$ is not equivalent to  $0 \mod {p^2}$ and such that $n_k\not \equiv 0 \mod p$ (this models the fact that the character is not zero). 
Then, a model for $\P \left( \X(p) \neq 0 \right) = 1 - \P \left(\X(p)=0 \right)$  yielding the product on the left side of \eqref{quotient-family} is given by
the quotient 
\begin{align*}
&\frac{ \# \{ (n_1, \dots, n_{\ell-1}) \mod{p^2} \;:\; n_k \not\equiv 0 \mod{p^2} \forall k, \left( n_k \not\equiv 0 \mod p \forall k \right) \}}
{\# \{ (n_1, \dots, n_{\ell-1}) \mod{p^2} \;:\; n_k \not\equiv 0 \mod{p^2} \forall k, \left( n_k \equiv 0 \mod p \mbox{ for at most one value of }k \right) \} \} } .
\end{align*}

In the case where $p\not \equiv 1 \mod \ell$ or $p=\ell$, since the  $n_k$ are supported on primes that are $\equiv 1 \mod \ell$, the conditions $n_k \equiv 0 \mod p$ are never met, and the above quotient has the same numerator and denominator, 
which gives $\P \left( \X(p) = 0 \right)=0$. 

In the case where $p \equiv 1 \mod \ell$, the fact that the $n_k$ are supported only on primes that are $\equiv 1 \mod \ell$ is not a restriction as they cover all the residue classes mod $p^2$ by the Chinese Remainder Theorem and Dirichlet's Theorem for primes in arithmetic progressions. A simple counting argument then gives
\begin{align*}
&\frac{ \# \{ (n_1, \dots, n_{\ell-1}) \mod{p^2} \;:\; n_k \not\equiv 0 \mod{p^2} \forall k, \left( n_k \not\equiv 0 \mod p \forall k \right) \}}
{\# \{ (n_1, \dots, n_{\ell-1}) \mod{p^2} \;:\; n_k \not\equiv 0 \mod{p^2} \forall k, \left( n_k \equiv 0 \mod p \mbox{ for at most one value of }k \right) \} \} }\\
&= \frac{(p^2-p)^{\ell-1} }{(p^2-p)^{\ell-1} +(\ell-1)(p^2-p)^{\ell-2}(p-1) } =\frac{p^2-p}{(p^2-p) +(\ell-1)(p-1) }   = \frac{p}{p+\ell-1} ,
\end{align*}
and this gives $\P \left( \X(p) = 0 \right)=\frac{\ell-1}{p+\ell-1}$.

\section{Moments of $L(1,\chi)$} 
\label{section-largemoments} 

As we mentioned in the introduction, the proof of Theorem \ref{theorem-relating-to-RV} relies on comparing the moments of $|L(1,\chi)|^{2z}$ for $\chi \in \cF_3(X)$, to the expectations $\mathbb{E}(|L(1,\mathbb{X})|^{2z})$. We compute these moments in this section. 

In order to properly state our result, we introduce the generalized divisor function. Let $z \in \C$, and $n$ be a positive integer. 
The $z$-divisor function $d_z(n)$ is the multiplicative function defined on primes powers by 
\begin{align} \label{dzn}
d_z(p^a) = \frac{\Gamma(z+a)}{\Gamma(z) \; a!}.\end{align}
Then, for any Dirichlet series $D(s) = \sum_{n=1}^\infty \frac{a(n)}{n^s}$, where the $a(n)$ are completely multiplicative, and for $s \in \C$ such that the series is absolutely convergent, we have
$$
D(s)^z = \sum_{n=1}^\infty \frac{d_z(n) a(n)}{n^s}.
$$
We collect here the estimates that we will use for $d_z(n)$, which can also be found at the end of Section 2 of \cite{GS}. We have $|d_z(n)| \leq d_{|z|}(n)$, and for a real number $k \geq 1$, $d_k(mn) \leq d_k(m) d_k(n)$.
For any positive integers $a,b,n$, we have $d_a(n) d_b(n) \leq d_{a+b}(n)$, and for any $z \in \C, \beta \in \R$, $|d_z(n)|^\beta \leq d_{|z|^\beta}(n)$.

The goal of this section is to prove the following result. 
\begin{thm}\label{Asympformula}
Let $z$ be a complex number and $X$ be large. Then, uniformly in the region $|z|\leq\frac{\log X}{16\log_2X\log_3 X}$ we have 
\begin{align*}
\frac1{|\mathcal{F}_3(X)|}\sum_{\chi\in\mathcal{F}_3(X)}|L(1,\chi)|^{2z}&=\sum_{\substack{n,m\ge 1\\ nm^2=\tinycube}}\frac{d_z(m)d_z(n)}{mn}
\prod_{\substack{ p \equiv 1 \mod 3\\p \mid nm}} \frac{p}{p+2}
  +O\left(\exp\left(-\frac{\log X}{16\log_2 X}\right)\right)\\
  &=\E(|L(1, \X)|^{2z})+O\left(\exp\left(-\frac{\log X}{16\log_2 X}\right)\right),
\end{align*}
where the $z$-divisor function $d_z(n)$ is the multiplicative function defined by \eqref{dzn} and the $\cube$ notation indicates that the sum is restricted to the case where $nm^2$ is a perfect cube. 
\end{thm}
We remark that the range in Theorem \ref{Asympformula}  is more limited than what is provided in \cite[Theorem 3]{GS}, as  the authors are able to take $|z|\le \frac{\log X\log_3 X}{e^{12}\log_2X}$. A key input of their argument is a 
result of Graham and Ringrose \cite[Theorem 5]{GR} that is only available over $\Q$. For cubic characters, one needs to work over $\Q(\omega_3)$.

We first remark that the second identity of  Theorem \ref{Asympformula} follows directly since
\begin{align*}
\E(|L(1, \X)|^{2z}) &= \sum_{n, m=1}^\infty \frac{d_z(n) d_z(m)}{n m} \E \left( \X(n) \overline{\X}(m) \right) 
= \sum_{n, m=1}^\infty \frac{d_z(n) d_z(m)}{n m} \E \left( \X(nm^2) \right) \\
&= \sum_{\substack{n, m=1\\nm^2 = \tinycube}}^\infty \frac{d_z(n) d_z(m)}{n m} \prod_{\substack{p \equiv 1 \mod 3\\p \mid mn}}  \frac{p}{p+2},
\end{align*}
where the last line follows from Lemma \ref{expectation}.

Before proceeding to the proof of Theorem \ref{Asympformula}, we require a few auxiliary results. 
We first recall the following definition 
\begin{align} \label{count-zeros} N(\sigma,T,\chi):=\#\{\rho =\beta+i\gamma: L(\rho,\chi)=0,  \;\sigma\leq\beta<1 \text{ and } |\gamma|\leq T\}.\end{align}
Estimates for $N(\sigma, T, \chi)$ are known as zero density estimates. The Generalized Riemann Hypothesis states that for any $\sigma>\frac12$ we have $N(\sigma, T,\chi) =0$ for all characters $\chi$ modulo $q$.  In the absence of the Riemann Hypothesis one can show that $N(\sigma, T, \chi)$ is quite small when compared to the total zero count 
$$N(T,\chi) :=\#\{\rho =\beta+i\gamma: L(\rho,\chi)=0,  \;0< \beta < 1 \text{ and }  |\gamma|\leq T\}
\sim \frac{T}{\pi}\log T.$$

\begin{lem} \cite[Theorem 12.2]{Montgomery-lectures}\label{zerodensMont}
Suppose that $Q\geq 1$ and $T\ge 2$. If $\frac12\leq \sigma \leq \frac45$, we have 
\[\sum_{q\leq Q}\,\,\sideset{}{^*} \sum_{\chi \mod{q}}N(\sigma, T,\chi)\ll (Q^2T)^{\frac{3(1-\sigma)}{2-\sigma}}(\log (QT))^9,\]
and if $\frac45\leq \sigma \leq 1$ then 
\[\sum_{q\leq Q}\,\,\sideset{}{^*} \sum_{\chi \mod{q}}N(\sigma, T,\chi)\ll (Q^2T)^{\frac{2(1-\sigma)}{\sigma}}(\log (QT))^{14}.\]
\end{lem}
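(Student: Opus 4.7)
The plan is to combine Montgomery's zero detection method with the hybrid large sieve inequality. The overall strategy is: show that if $\rho=\beta+i\gamma$ is a zero of $L(s,\chi)$ with $\beta\ge\sigma$, then some Dirichlet polynomial (obtained by mollifying $L(s,\chi)$) must take a large value near $s=\rho$; then use mean-value bounds averaged over primitive $\chi$ modulo $q$ with $q\le Q$ to limit how often this can happen.

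First, I would introduce the Dirichlet mollifier $M_Y(s,\chi)=\sum_{n\le Y}\mu(n)\chi(n)n^{-s}$ of length $Y$, which is a partial inverse of $L(s,\chi)$. By a contour shift applied to the Mellin transform of $L(s+w,\chi)M_Y(s+w,\chi)\Gamma(w)X^{w}$ around $w=0$, at a zero $\rho$ of $L(s,\chi)$ one derives an identity which, after crude estimation, asserts
\[
1 \ll \Bigl|\sum_{Y<n\le X} a_Y(n)\chi(n) n^{-\rho}\Bigr| + \Bigl|\sum_{n>X} a_Y(n)\chi(n) n^{-\rho} e^{-n/X}\Bigr|,
\]
where $a_Y(n)=\sum_{d\mid n,\,d\le Y}\mu(d)$ vanishes for $1<n\le Y$. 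A standard well-spacing argument in $\gamma$ (losing only a factor of $\log T$) then reduces the problem to controlling large values, at well-spaced points $s_j=\sigma+it_j$, of dyadic Dirichlet polynomials in $n$ over the ranges $Y<N\le X$.

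Second, I would apply the hybrid large sieve in the form
\[
\sum_{q\le Q}\frac{q}{\phi(q)}\sideset{}{^*}\sum_{\chi\mod{q}}\int_{-T}^{T}\Bigl|\sum_{N<n\le 2N}a_n\chi(n)n^{-it}\Bigr|^{2}dt \;\ll\; (N+Q^{2}T)\sum_{n}|a_n|^{2},
\]
to each dyadic block. Bounding $|a_Y(n)|\le d(n)$, summing over dyadic ranges $Y<N\le X$, and optimizing the mollifier length $Y$ as a power of $Q^{2}T$ balancing the $Y^{2(1-\sigma)}$ and $Q^{2}T\cdot Y^{-2\sigma}$ contributions, yields the bound $(Q^{2}T)^{3(1-\sigma)/(2-\sigma)}(\log QT)^{9}$ in the range $\sigma\in[1/2,4/5]$.

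For $\sigma\in[4/5,1]$ the straightforward large sieve loses too much, and I would invoke the Hal\'asz--Montgomery method: one replaces the diagonal $\ell^{2}$-expansion by a duality estimate (the ``double large sieve'') that refines the count when $N$ is small compared to $Q^{2}T$. Reoptimizing $Y$ in this refined framework produces the exponent $2(1-\sigma)/\sigma$, and the two bounds agree precisely at $\sigma=4/5$. The main obstacle is the Hal\'asz--Montgomery step: tracking the well-spacedness of zeros, isolating the thin set of characters near which zeros cluster, and controlling the interaction with the mollifier requires delicate bookkeeping, which is why the polylogarithmic factor worsens from $(\log QT)^{9}$ to $(\log QT)^{14}$.
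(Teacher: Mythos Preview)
The paper does not prove this lemma at all: it is simply quoted as \cite[Theorem 12.2]{Montgomery-lectures} and used as a black box in the proof of Theorem~\ref{Asympformula}. Your sketch is a faithful outline of Montgomery's original argument from that reference (mollifier plus contour shift to produce a large-values problem, hybrid large sieve for $\sigma\le 4/5$, Hal\'asz--Montgomery refinement for $\sigma\ge 4/5$), so there is nothing to compare against within the paper itself.
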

Note that the power of $T$ here is strictly less than 1 for all $\frac12< \sigma\leq 1$.  We will see why this becomes useful in just a few lemmas. First, let us write an expression for  $|L(1,\chi)|^{2z}$ under the assumption that  $N(1-\epsilon, 2(\log q)^{2/\epsilon},\chi)=0$ for a fixed $\epsilon$:

\begin{lem} \label{DL-momentsprop}
 Let $q$ be large and $0<\epsilon < \frac12$ be fixed. Let $y$ be a real number such that $\frac{\log q}{\log_2q}\leq \log y\leq \log q$. Furthermore, assume that $L(s,\chi)$ 
 has no zeros inside the rectangle $\{s: 1-\epsilon < \re(s)\leq 1 \, \mathrm{ and }\,  |\im(s)|\leq 2(\log q)^{2/\epsilon}\}$.  Then for any complex number $z$ such that $|z|\leq\log y/(4\log_2q\log_3q)$ we have 
\begin{align*}
|L(1,\chi)|^{2z}=\sum_{m,n\ge 1}\frac{d_z(m)d_z(n)\chi(n)\overline{\chi}(m)}{mn}e^{-mn/y} + O_{\epsilon}\left(\exp\left(-\frac{\log y}{4\log_2 q}\right)\right),
 \end{align*}
 where $d_z(n)$ is the $z$-divisor function defined by \eqref{dzn}.
\end{lem}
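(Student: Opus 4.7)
The plan is to represent the sum on the right as a Mellin--Barnes contour integral, shift the contour past the simple pole at $s=0$ (whose residue produces $|L(1,\chi)|^{2z}$), and bound the remainder using the zero-free rectangle hypothesis.

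\emph{Step 1 (Mellin representation).} Using the Mellin identity $e^{-x}=\frac{1}{2\pi i}\int_{(c)}\Gamma(s)x^{-s}\,ds$ for $c>0$ and the Dirichlet series $L(1+s,\chi)^{z}=\sum_{n}d_{z}(n)\chi(n)n^{-1-s}$ (absolutely convergent on $\mathrm{Re}(s)>0$ since $\sum_n |d_z(n)|n^{-1-\sigma}\le \zeta(1+\sigma)^{|z|}$), I interchange sum and integral to write
\[
S:=\sum_{m,n\ge 1}\frac{d_z(m)d_z(n)\chi(n)\overline\chi(m)}{mn}\,e^{-mn/y}=\frac{1}{2\pi i}\int_{(c)}L(1+s,\chi)^{z}L(1+s,\overline\chi)^{z}\,\Gamma(s)\,y^{s}\,ds,
\]
where I take $c=1/\log y$; the integral converges absolutely thanks to the exponential decay of $\Gamma$ on vertical lines.

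\emph{Step 2 (Contour shift).} I shift the contour to $\mathrm{Re}(s)=-\delta_0$, truncated at $|\mathrm{Im}(s)|=T:=(\log q)^{2/\epsilon}$, which lies inside the hypothesized zero-free rectangle. The only singularity crossed is the simple pole of $\Gamma(s)$ at $s=0$, whose residue is $L(1,\chi)^{z}L(1,\overline\chi)^{z}=|L(1,\chi)|^{2z}$. So it suffices to bound the new vertical integral plus two horizontal pieces.

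\emph{Step 3 (Vertical integral).} Using the zero-free region and a standard Borel--Carath\'eodory / Hadamard factorization argument, I expect to establish $|\log L(1+s,\chi)|\ll_{\epsilon}\log_2 q$ uniformly on the rectangle $\{-\delta_0\le\mathrm{Re}(s)\le 0,\ |\mathrm{Im}(s)|\le T\}$, provided $\delta_0<\epsilon/2$. Then
\[
|L(1+s,\chi)^{z}L(1+s,\overline\chi)^{z}|\le\exp\!\bigl(2|z|\cdot O(\log_2 q)\bigr)\le\exp\!\Bigl(O\bigl(\tfrac{\log y}{\log_3 q}\bigr)\Bigr)
\]
by the hypothesis on $|z|$. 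The vertical integrand on $\mathrm{Re}(s)=-\delta_0$ is therefore bounded by $y^{-\delta_0}\exp(O(\log y/\log_3 q))\cdot|\Gamma(-\delta_0+it)|$, and $\int|\Gamma(-\delta_0+it)|\,dt=O(1)$. Choosing $\delta_0$ of order $1/\log_3 q$ (say $\delta_0=C_{\epsilon}/\log_3 q+1/(4\log_2 q)$) balances the two exponential factors and yields an overall bound of $\exp(-\log y/(4\log_2 q))$, which is the target error term.

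\emph{Step 4 (Horizontal pieces).} On the horizontal segments $\mathrm{Im}(s)=\pm T$, $-\delta_0\le\mathrm{Re}(s)\le c$, the factor $|\Gamma(s)|\ll e^{-\pi T/2}$ wipes out all polynomial-in-$q$ growth from $L$-values and from $y^{s}$, so these contributions are super-polynomially small, hence negligible compared to the target error.

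\emph{Main obstacle.} The crucial analytic input is the uniform bound $|\log L(1+s,\chi)|\ll_{\epsilon}\log_2 q$ on the shifted rectangle. This is where the zero-free hypothesis is really used: starting from the trivial bound $|\log L(s,\chi)|\ll\log q$ on a slightly larger box where $\mathrm{Re}(s)$ is close to $1+\epsilon$, one applies Borel--Carath\'eodory (or equivalently the Landau/Selberg explicit formula with the sum over zeros absent) to upgrade this to a $\log_2 q$ bound on the working rectangle. Once this bound is secured, the remaining calibration of $\delta_0$ and $T$ is mechanical, and the choice of $\delta_0\asymp 1/\log_3 q$ is what dictates the particular range $|z|\le\log y/(4\log_2 q\log_3 q)$ appearing in the statement.
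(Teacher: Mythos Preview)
Your approach---Mellin inversion, contour shift past the pole at $s=0$, and bounding the shifted integral via an estimate for $\log L$ coming from the zero-free rectangle---is correct and is exactly the standard argument; the paper simply defers to \cite[Proposition 3.3]{DL}, which proceeds in this way.

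One calibration point in Step~3 needs adjustment, though. The bound $|\log L(1+s,\chi)|\ll_{\epsilon}\log_2 q$ does \emph{not} hold uniformly for all $-\epsilon/2<\re(s)\le 0$: naive integration of $L'/L$ (which is $\ll_\epsilon\log q$ in the zero-free box) only gives $\ll_\epsilon\log q$ there, and the truncated prime sum $\sum_{p\le T}p^{-1+\delta_0}$ blows up once $T^{\delta_0}$ is unbounded. The point of the tall rectangle of height $T=(\log q)^{2/\epsilon}$ is that for $\delta_0=c_\epsilon/\log_2 q$ one has $T^{\delta_0}=e^{2c_\epsilon/\epsilon}=O_\epsilon(1)$, and then the approximation
\[
\log L(\sigma+it,\chi)=\sum_{n\le T}\frac{\Lambda(n)\chi(n)}{n^{\sigma+it}\log n}+O_\epsilon(1)
\]
yields $|\log L(1-\delta_0+it,\chi)|\ll_\epsilon\log\log T\asymp_\epsilon\log_3 q$. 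With this sharper bound and the hypothesis $|z|\le\log y/(4\log_2 q\log_3 q)$ one gets $|L(1+s,\chi)^{z}L(1+s,\overline\chi)^{z}|\le\exp\bigl(O_\epsilon(\log y/\log_2 q)\bigr)$, which is beaten by $y^{-\delta_0}=\exp(-c_\epsilon\log y/\log_2 q)$ for $c_\epsilon$ large enough, giving the stated error. So the correct choice is $\delta_0\asymp_\epsilon 1/\log_2 q$, not $1/\log_3 q$; your two quantitative claims in Step~3 were individually off in a way that happened to cancel, but the underlying mechanism is the one just described---and it explains simultaneously why the rectangle has height $(\log q)^{2/\epsilon}$ and why the admissible range for $|z|$ carries the factor $\log_3 q$.
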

\begin{proof} 
The proof can be easily adapted from the proof of Proposition 3.3 in \cite{DL}. 
\end{proof}

We will need two versions of the P\'olya--Vinogradov inequality.

\begin{lem} \cite{Hinz} \label{PV-Hinz} Let $K$ be a number field of degree $d$ and let $\chi$ be a primitive nonprincipal character of the group of narrow ideal-classes modulo an ideal $\mathfrak{q}$. Then we have 
\[\sum_{N(\mathfrak{a})\leq X} \chi(\mathfrak{a}) \ll N(\mathfrak{q})^{\frac{1}{d+1}}(\log N(\mathfrak{q}))^d X^{\frac{d-1}{d+1}}.\]
\end{lem}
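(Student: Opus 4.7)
The plan is to imitate the classical P\'olya--Vinogradov argument over $\Q$, suitably adapted to the $d$-dimensional geometry of a number field. The basic strategy---expanding $\chi$ via Gauss sums and then bounding the resulting linear exponential sums over lattice points---goes through, but the final exponents now reflect the $d$-dimensional lattice structure of $\OK$.

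First, I would partition the sum by narrow ideal classes of $K$. Fixing an integral representative $\mathfrak{b}_C$ in each class $C$, any ideal $\mathfrak{a} \in C$ with $N(\mathfrak{a}) \leq X$ has the form $(\alpha)\mathfrak{b}_C^{-1}$ for a totally positive $\alpha \in \mathfrak{b}_C$, uniquely defined up to totally positive units. Choosing a fundamental domain $\mathcal{D}$ for the action of the totally positive units on Minkowski space identifies the sum with a sum over lattice points $\alpha \in \mathfrak{b}_C \cap \mathcal{D}$ satisfying $|N(\alpha)| \leq Y$, where $Y \asymp X \cdot N(\mathfrak{b}_C)$. The number of narrow ideal classes of $K$ is a constant depending only on $K$, and is absorbed into the implicit constant.

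Next, I would invoke the Gauss sum expansion for the primitive ray class character $\chi$ modulo $\mathfrak{q}$:
\[
\chi((\alpha)\mathfrak{b}_C^{-1}) = \frac{\chi(\mathfrak{b}_C^{-1})}{\tau(\overline\chi)} \sum_{\beta \bmod \mathfrak{q}} \overline\chi(\beta)\, e_K\!\left(\frac{\beta\alpha}{\mathfrak{q}}\right),
\]
where $e_K$ is the standard additive character of $K$ built from the trace and $|\tau(\chi)|^2 = N(\mathfrak{q})$. Interchanging the order of summation reduces the problem to estimating, for each nonzero $\beta \bmod \mathfrak{q}$, the linear exponential sum
\[S(\beta) := \sum_{\substack{\alpha \in \mathfrak{b}_C \cap \mathcal{D} \\ |N(\alpha)|\leq Y}} e_K(\beta\alpha/\mathfrak{q}).\]
To bound $S(\beta)$, I would apply Poisson summation on the lattice $\mathfrak{b}_C$, using that the Fourier transform of the indicator of $\mathcal{D} \cap \{|N(\alpha)| \leq Y\}$ decays away from its support.

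The main obstacle is carrying out this Fourier analysis carefully enough to expose how the degree $d$ enters the final exponent. Counting dual-lattice points in a ball of radius $R$ yields $O(R^d)$ terms, while partial summation along one Minkowski coordinate gives an individual exponential sum bound of order $Y/R$. Balancing the two contributions by choosing $R \asymp (Y/N(\mathfrak{q}))^{1/(d+1)}$ produces the factors $X^{(d-1)/(d+1)}$ and $N(\mathfrak{q})^{1/(d+1)}$ in the final estimate, while the $(\log N(\mathfrak{q}))^d$ factor absorbs the summation of $\min(Y, \|\cdot\|^{-1})$-type contributions along each of the $d$ independent Minkowski directions, echoing the single $\log q$ factor in the classical one-dimensional bound. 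Treating the boundary of $\mathcal{D}$ in a way that remains uniform in $\beta$ and $\mathfrak{q}$ is the most delicate point, since the region $\{|N(\alpha)| \leq Y\}$ has cusps along the coordinate hyperplanes; one way to handle this is to smooth the cutoff with a mollifier of width $Y^{1/(d+1)}$ and to absorb the resulting error into the balanced bound.
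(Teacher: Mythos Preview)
The paper does not prove this lemma; it is quoted directly from Hinz \cite{Hinz} as a black-box input, so there is no ``paper's own proof'' to compare against. Your outline is a plausible reconstruction of the standard number-field P\'olya--Vinogradov argument (reduce to lattice points in a fundamental domain, expand via Gauss sums, then control linear exponential sums over the lattice), and the heuristic parameter-balancing you describe does predict the correct exponents $N(\mathfrak{q})^{1/(d+1)}$ and $X^{(d-1)/(d+1)}$.

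That said, what you have written is a strategy rather than a proof, and the points you flag as ``delicate'' are exactly where the real work lies. The region $\{|N(\alpha)|\le Y\}\cap\mathcal{D}$ is unbounded with hyperbolic cusps, so the Fourier transform of its indicator does not have the clean decay you would get for a convex body; simply mollifying at scale $Y^{1/(d+1)}$ does not obviously control the cusp contribution uniformly in $\beta$. In practice one truncates the fundamental domain (introducing a parameter that must later be optimized), or one works instead with a smooth weight adapted to the logarithmic embedding, and in either case the bookkeeping that produces $(\log N(\mathfrak{q}))^d$ rather than a worse logarithmic power is not automatic. Your Poisson-summation sketch also silently assumes a pointwise bound on $S(\beta)$ coming from a single coordinate, whereas the actual argument typically iterates partial summation across all $d$ archimedean places, which is what generates the $d$-th power of the logarithm. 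If you intend to supply a self-contained proof rather than cite Hinz, these steps need to be made precise.
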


\begin{lem}
\label{Lem41GS}
If $n$ is a positive integer which is not a cube, then
$$\sum_{\chi \in \cF_3(X)} \chi(n) \ll X^{\frac12} \,\log{X} \,\tilde{n}^{\frac12} \,(\log{\tilde{n}})^{\frac32} ,$$
where $\tilde{n}$ denotes the radical of $n$. 
\end{lem}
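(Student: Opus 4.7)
My plan is to translate $\sum_{\chi\in\cF_3(X)}\chi(n)$ into a character sum over ideals of $\OK=\Z[\omega_3]$ via cubic reciprocity, and then bound the result using Hinz's P\'olya--Vinogradov inequality (Lemma~\ref{PV-Hinz}) for $K=\Q(\omega_3)$, combined with M\"obius inversion in $\OK$ to handle the squarefree and split restrictions.

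First, using the correspondence from Section~\ref{Section2}: for each prime $p_i\equiv 1\mod{3}$ with factorization $p_i=\pi_i\bar{\pi}_i$ in $\OK$, the cubic characters $\chi_{p_i}$ and $\chi_{p_i}^2$ are the cubic residue symbols $\chi_{\pi_i}$ and $\chi_{\bar{\pi}_i}$. Thus each $\chi\in\cF_3(X)$ corresponds bijectively to a split squarefree ideal $\mathfrak{m}\subset\OK$ with $N(\mathfrak{m})=\mathrm{cond}(\chi)$, and $\chi(n)=\chi_\mathfrak{m}(n)$. Cubic reciprocity applied to a primary generator of $\mathfrak{m}$ (using $\chi_\alpha(-1)=1$ to absorb sign conventions on $n$) then gives $\chi_\mathfrak{m}(n)=\chi_n(\mathfrak{m})$, where $\chi_n$ is the cubic residue Hecke character modulo $n$ on $\OK$. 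Since $n$ is not a cube in $\Z$ and $\Z\cap\OK^3=\Z^3$, the character $\chi_n$ is a nontrivial primitive Hecke character with conductor dividing $\tilde n\OK$. Therefore
\[
\sum_{\chi\in\cF_3(X)}\chi(n)=\sum_{\substack{\mathfrak{m}\subset\OK\text{ sqf, split}\\ N(\mathfrak{m})\leq X,\,(\mathfrak{m},n)=1}}\chi_n(\mathfrak{m}).
\]

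Next, I would apply M\"obius inversion in $\OK$ twice to strip both constraints. Writing $\mathbf{1}[\mathfrak{m}\text{ sqf}]=\sum_{\mathfrak{d}^2\mid\mathfrak{m}}\mu_K(\mathfrak{d})$ removes squarefreeness (with $\mathfrak{d}$ forced to be split by the split condition on $\mathfrak{m}$); using the unique factorization $\mathfrak{l}=\mathfrak{l}_s\mathfrak{l}_{ns}$ of any ideal into its split and non-split parts, $\mathbf{1}_{\mathrm{split}}(\mathfrak{l})=\sum_{\mathfrak{e}\mid\mathfrak{l},\,\mathfrak{e}\text{ non-split}}\mu_K(\mathfrak{e})$ removes the split restriction. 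After swapping the order of summation, the innermost sum becomes $\sum_{N(\mathfrak{a})\leq Z,\,(\mathfrak{a},n)=1}\chi_n(\mathfrak{a})$, to which Lemma~\ref{PV-Hinz} applies with $d=2$. Combining this with the sizes of the outer M\"obius sums---the split-$\mathfrak{d}$ sum contributing $\log X$ from $\sum_{N(\mathfrak{d})\leq\sqrt{X}}N(\mathfrak{d})^{-1}$, and the non-split-$\mathfrak{e}$ sum being small owing to the sparsity of non-split ideals (supported on inert primes of norm $p^2$ with $p\equiv 2\mod{3}$ and the ramified prime of norm $3$, so whose Dirichlet series has its rightmost pole at $s=1/2$)---yields a bound of the expected shape $X^{1/2}(\log X)\cdot\tilde n^{?}(\log\tilde n)^{?}$.

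The main obstacle will be the delicate bookkeeping needed to extract the sharp exponent $\tilde n^{1/2}$ and logarithmic power $(\log\tilde n)^{3/2}$: since Lemma~\ref{PV-Hinz} applied at a conductor of norm $\tilde n^2$ yields dependence $\tilde n^{2/3}$ in the bare bound, the remaining factor $\tilde n^{1/6}$ must be recovered by carefully exploiting cancellation in the outer M\"obius layers (especially the saving from the sparse non-split sum), and possibly by sharpening the P\'olya--Vinogradov step via convexity for $L(s,\chi_n)$ tailored to the cubic Hecke character. Verifying that cubic reciprocity transfers cleanly to the Hecke-character sum over $\OK$ (tracking the primary conventions and the fact that $-1$ is a cube) is an additional technical point that needs to be laid out precisely.
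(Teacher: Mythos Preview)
Your overall strategy matches the paper's: translate to a character sum over $\Z[\omega_3]$ via the Baier--Young correspondence, peel off the ``squarefree'' and ``no rational prime divisor'' constraints by M\"obius inversion, and bound the resulting free inner sum by the Hinz P\'olya--Vinogradov inequality with $d=2$. The paper's sieve is organised slightly differently (a rational M\"obius over $d\in\Z$ for the split condition, a $\Z[\omega_3]$-M\"obius over $\ell$ for squarefreeness, and a further M\"obius over $e\mid d$ to drop the coprimality $(c,d)=1$), but this is cosmetic.

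The genuine gap is in how you get from $\tilde n^{2/3}$ down to $\tilde n^{1/2}$. You correctly observe that Lemma~\ref{PV-Hinz} applied to a Hecke character of conductor norm $\asymp\tilde n^2$ gives only
\[
\sum_{N(c)\le Z}\psi_n(c)\ll \tilde n^{2/3}(\log\tilde n)^2\,Z^{1/3},
\]
but your proposed remedies---extra cancellation in the outer M\"obius layers, or sharpening P\'olya--Vinogradov by convexity---are not what is needed and would not in fact work here. The paper instead uses the completely elementary device of pairing the P\'olya--Vinogradov bound with the \emph{trivial} bound $\sum_{N(c)\le Z}\psi_n(c)\ll Z$, taking the minimum of the two, and then optimising over the squarefree-sieve variable $\ell$. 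Concretely, with $Z=X/N(de\ell^2)$ one splits the $\ell$-sum at
\[
N(\ell_0)=X^{1/2}\,\tilde n^{-1/2}(\log\tilde n)^{-3/2}\,N(ed)^{-1/2},
\]
using P\'olya--Vinogradov for $N(\ell)\le\ell_0$ and the trivial bound for $N(\ell)>\ell_0$. Both pieces then give $\ll X^{1/2}\tilde n^{1/2}(\log\tilde n)^{3/2}N(ed)^{-1/2}$, and summing over $d$ (which contributes the $\log X$) and $e\mid d$ finishes the proof. This min-and-optimise step is the missing idea; once you insert it, the rest of your outline goes through.
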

\begin{rem}
This result is a weaker analogue of  \cite[Lemma 4.1]{GS}, which gives a bound of $X^{\frac12} n^\frac{1}{4} (\log{n})^{\frac12}$ in the quadratic case. This is because we have to use Lemma \ref{PV-Hinz}, since we are working with integers in $\Z[\omega_3]$,  and the degree of the extension is $2$.
\end{rem}

The following statement provides a way to express the number of elements of $\cF_3(X)$ as a single sum over characters of  $\Z[\omega_3]$, with some additional conditions.
\begin{lem}\cite[Lemma 2.1]{BY} \label{lemma-BY}
\label{BY-1to1-cubic-chars}
The primitive cubic Dirichlet characters of conductor $q$, $(q,3)=1$ are of the form $\chi_n:m\to\legendre{m}{n}_3$ for some $n\in \Z[\omega_3]$, $n\equiv 1 \mod 3$, $n$ squarefree and not divisible by any rational primes, with norm $N(n)=q$.
\end{lem}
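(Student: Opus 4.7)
My plan is to establish the classification by matching the local structure of primitive cubic Dirichlet characters on $\Z$ with the arithmetic of split primes in $\Z[\omega_3]$, going in both directions.

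First I would constrain the conductor: if $\chi$ is a primitive cubic character of conductor $q$ with $(q,3)=1$, then $q$ is squarefree and each prime divisor $p \mid q$ satisfies $p \equiv 1 \mod 3$. This follows because a nontrivial cubic character modulo $p^k$ requires $3 \mid \varphi(p^k)=p^{k-1}(p-1)$, which for $p\ne 3$ forces $p\equiv 1\mod 3$; and since the $3$-torsion of $(\Z/p^k\Z)^\times \cong \Z/p^{k-1}\Z \times \Z/(p-1)\Z$ sits entirely in the second factor, primitivity forces $k=1$. Thus $q=p_1\cdots p_s$ and $\chi=\prod_i \chi_i$ with each $\chi_i$ a nontrivial cubic character modulo $p_i$.

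Next I would match each local factor $\chi_i$ with a split prime in $\Z[\omega_3]$. For each $p_i\equiv 1\mod 3$, $p_i=\pi_i\bar{\pi}_i$ in $\Z[\omega_3]$, and among the six associates of $\pi_i$ there is a unique \emph{primary} generator (i.e., satisfying $\pi_i\equiv 1\mod 3$). The natural embedding yields an isomorphism $(\Z/p_i\Z)^\times \xrightarrow{\sim} (\Z[\omega_3]/\pi_i)^\times$, and under this isomorphism the cubic residue symbol $\left(\tfrac{\cdot}{\pi_i}\right)_3$ is one of the two nontrivial cubic characters modulo $p_i$, with $\left(\tfrac{\cdot}{\bar{\pi}_i}\right)_3 = \overline{\left(\tfrac{\cdot}{\pi_i}\right)_3}$ being the other. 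For each $i$ I pick the primary representative from $\{\pi_i,\bar{\pi}_i\}$ so that $\left(\tfrac{\cdot}{\pi_i}\right)_3=\chi_i$, and set $n=\pi_1\cdots\pi_s$. By multiplicativity of the cubic residue symbol, $\chi_n(m)=\prod_i\chi_i(m)=\chi(m)$ for $m\in\Z$ coprime to $q$. The element $n$ is primary (product of primaries), squarefree (distinct split primes), has $N(n)=q$, and is not divisible by any rational prime (each $\pi_i$ is split rather than inert). Conversely, any $n$ with these properties determines $\chi_n$, and the uniqueness of the primary generator at each split prime shows that the correspondence $\chi\leftrightarrow n$ is bijective.

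The main technical obstacle is verifying that $\left(\tfrac{\cdot}{n}\right)_3$, which is a priori defined on $\Z[\omega_3]/n$, actually descends to a well-defined Dirichlet character on $(\Z/q\Z)^\times$ of conductor \emph{exactly} $q$. Well-definedness follows from $n\Z[\omega_3]\cap\Z=q\Z$, which in turn follows from the Chinese Remainder Theorem applied to the coprime ideals $(\pi_1),\dots,(\pi_s)$ together with $(\pi_i)\cap\Z=p_i\Z$. For the conductor being exactly $q$, I would observe that the exclusion of rational primes dividing $n$ is essential: if $p\equiv 2\mod 3$ were inert and divided $n$, then the restriction of $\left(\tfrac{\cdot}{p}\right)_3$ to $\Z$ would be trivial (since $m^{(p^2-1)/3}=(m^{p-1})^{(p+1)/3}\equiv 1\mod p$ for $m\in\Z$ coprime to $p$), so $\chi_n$ would factor through a proper divisor of its apparent conductor, contradicting primitivity. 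This verification closes both directions and gives the stated bijection.
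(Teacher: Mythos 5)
Your argument is correct, and it is essentially the standard proof of this classification: the paper itself gives no proof but simply cites \cite[Lemma 2.1]{BY}, and your route (reduce to prime conductors $p\equiv 1 \bmod 3$, match each nontrivial cubic character mod $p$ with the cubic residue symbol of the unique primary prime above $p$, then multiply and check conductor/uniqueness) is the same one used there. No gaps of substance; the only step worth spelling out slightly more is that in the converse direction the conductor is exactly $N(n)$ because each local factor $\left(\frac{\cdot}{\pi_i}\right)_3$ restricted to $\Z$ is a nontrivial character to the distinct prime modulus $p_i$, which is already implicit in your forward construction.
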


From Lemma \ref{lemma-BY}, we have 
\[
\sum_{\chi \in \cF_3(X)} 1 =  \sideset{}{'}\sum_{\substack {n \in \Z[\omega_3] \\ N(n) \leq X}} 1,
\]
where the prime  indicates that the sum runs over the integers $n \in \Z[\omega_3]$ which are square-free, not divisible by any $p \in \Z$, and such that $n \equiv 1 \mod 3$. We use the detectors
\begin{equation*}
\sum_{\substack{d \in \Z, d \mid n \\ d \equiv 1 \mod 3}} \mu_\Z (d) = \begin{cases} 1 & \text{if $n$ is not divisible by a rational prime}, \\ 0 & \mbox{otherwise}, \end{cases}
\end{equation*}
(where $\mu_\Z(d) = \mu(|d|)$), and
\begin{equation*}
\sum_{\substack{d \in \Z[\omega_3], d \equiv 1 \mod 3\\ d^2 \mid n}} \mu_{\Z[\omega_3]} (d) = \begin{cases} 1 & \text{if $n$ is square-free}, \\ 0 & \mbox{otherwise}. \end{cases}
\end{equation*}

\begin{proof}[Proof of Lemma \ref{Lem41GS}]
Using Lemma \ref{lemma-BY}, we have
\begin{align}\label{eq:sieve}
&\sum_{\chi \in \cF_3(X)} \chi(n) 
= \sum_{\substack{d \in \Z, \;d  \equiv 1 \mod 3 \\ |d| \leq \sqrt{X}}}  \mu_\Z(d) \left( \frac{n}{d} \right)_3  
\sum_{\substack{\ell \in \Z[\omega_3], \;\ell \equiv 1 \mod 3\\ (\ell, d)=1\\ N(\ell) \leq \sqrt{X/N(d)}}} \mu_{\Z[\omega_3]} (\ell) \left( \frac{n}{\ell^2} \right)_3
\sum_{\substack{ c \in \Z[\omega_3], \;c \equiv 1 \mod 3 \\ (c,d)=1 \\ N(c) \leq X/N(d \ell^2)}} \left( \frac{n}{c} \right)_3 \\
&= \sum_{\substack{d \in \Z, \;d  \equiv 1 \mod 3 \\ e \mid d, \;e \equiv 1 \mod{3} \\|d| \leq \sqrt{X}}}  \mu_{\Z[\omega_3]} (e) \mu_\Z(d) \left( \frac{n}{d} \right)_3  \left( \frac{n}{e} \right)_3 
\sum_{\substack{\ell \in \Z[\omega_3], \;\ell \equiv 1 \mod 3\\ (\ell, d)=1 \\ N(\ell) \leq \sqrt{X/N(d)}}} \mu_{\Z[\omega_3]} (\ell) \left( \frac{n}{\ell^2} \right)_3
\sum_{\substack{ c \in \Z[\omega_3], \;c \equiv 1 \mod 3  \\ N(c) \leq X/N(d e \ell^2)}} \left( \frac{n}{c} \right)_3. \nonumber
\end{align}

The function $\psi_n: (c) \mapsto  \left( \frac{n}{c} \right)_3$ defined on ideals $(c) \subset \Z[\omega_3]$ (coprime to 3, and where $c \equiv 1 \mod 3$) is a non-trivial Hecke character of modulus $9n$, and 
we apply Lemma \ref{PV-Hinz}. It is not necessarily primitive, but we work with the primitive character induced by  $\left( \frac{n}{c} \right)_3$, whose conductor divides $9\tilde{n}$, where $\tilde{n}$ is the radical of $n$. Since $[\Q(\omega_3):\Q]=2$, we get by Lemma \ref{PV-Hinz} that
\[
\sum_{\substack{ c \in \Z[\omega_3], c \equiv 1 \mod 3  \\ N(c) \leq X/N(d e \ell^2)}} \left( \frac{n}{c} \right)_3 \ll N(\tilde{n})^{\frac13} \log^2{N(\tilde{n})}  \left( \frac{X}{N(de\ell^2)} \right)^{\frac13} \ll \tilde{n}^{\frac 23} \left( \frac{X}{N(de\ell^2)} \right)^{\frac13} (\log{\tilde{n}})^2,\]
and then
\begin{align*}
\sum_{\chi \in \cF_3(X)} \chi(n)  &\ll \sum_{\substack{d \in \Z,\; d  \equiv 1 \mod 3 \\ |d| \leq \sqrt{X}\\e \mid d, \; e \equiv 1 \mod 3}} 
\sum_{\substack{\ell \in \Z[\omega_3], \ell \equiv 1 \mod 3\\ (\ell, d)=1 \\ N(\ell) \leq \sqrt{X/N(d)}}} \min \left( \tilde{n}^{\frac 23} \left( \frac{X}{N(de\ell^2)} \right)^{\frac13} (\log{\tilde{n}})^2, \frac{X}{N(ed\ell^2)} \right) \\
&\ll X^{\frac13} \tilde{n}^{\frac 23} (\log{\tilde{n}})^2 \sum_{\substack{d \in \Z, d  \equiv 1 \mod 3 \\ |d| \leq \sqrt{X}\\e \mid d, \; e \equiv 1 \mod 3}} \frac{1}{N(ed)^{\frac13}}
\sum_{\substack{\ell \in \Z[\omega_3], \ell \equiv 1 \mod 3\\ N(\ell) \leq \ell_0}} \frac{1}{N(\ell)^{\frac 23}} \\ & + X \sum_{\substack{d \in \Z, d  \equiv 1 \mod 3 \\ |d| \leq \sqrt{X}\\e \mid d, \; e \equiv 1 \mod 3}} \frac{1}{N(ed)}\sum_{\substack{\ell \in \Z[\omega_3], \ell \equiv 1 \mod 3\\ N(\ell) \geq \ell_0}}  \frac{1}{N(\ell)^2}  \\
&\ll X^{\frac12} \tilde{n}^{\frac12} (\log{\tilde{n}})^{\frac32} \sum_{\substack{d \in \Z, d  \equiv 1 \mod 3 \\ |d| \leq \sqrt{X}\\e \mid d, \; e \equiv 1 \mod 3}} \frac{1}{N(ed)^{\frac12}} \ll X^{\frac12}\log X \tilde{n}^{\frac12} (\log{\tilde{n}})^{\frac32},
\end{align*}
where $\ell_0 = X^{\frac{1}{2}} \tilde{n}^{-1/2} (\log \tilde{n})^{-3/2} N(ed)^{-1/2}$.
\end{proof}

\begin{proof}[Proof of Theorem \ref{Asympformula}.]
Let $y=X^\alpha$ for some $\alpha<\frac{1}{2}$  and let $z$ be a complex number such that $|z|\leq\log y/(4\log_2X\log_3X )$. 
Let $$\widetilde{\mathcal{F}}_{\epsilon}(X)=\{\chi\in\mathcal{F}_3(X): N(1-\epsilon, 2(\log q)^{2/\epsilon},\chi)=0\},$$ 
where $N(1-\epsilon, 2(\log q)^{2/\epsilon},\chi)$ is defined by \eqref{count-zeros}. Then,
\[\frac1{|\mathcal{F}_3(X)|}\sum_{\chi\in\mathcal{F}_3(X)}|L(1,\chi)|^{2z}=\frac1{|\mathcal{F}_3(X)|}\sum_{\chi\in\widetilde{\mathcal{F}}_{\epsilon}(X)}|L(1,\chi)|^{2z}+\frac1{|\mathcal{F}_3(X)|}\sum_{\chi\in\mathcal{F}_3(X)\setminus\widetilde{\mathcal{F}}_{\epsilon}(X) }|L(1,\chi)|^{2z}.\]
We first  show that the second sum is negligible using Lemma \ref{zerodensMont}. Afterwards, we apply Lemma \ref{DL-momentsprop} to the first sum to obtain the desired main term.
First, if $\frac45\leq1-\epsilon$ then 
\begin{align*}
|\mathcal{F}_3(X)\setminus\widetilde{\mathcal{F}}_{\epsilon}(X)|&=\sum_{q\leq X} \sum_{\substack{\chi \text{ cubic} \\ N(1-\epsilon,  2(\log q)^{2/\epsilon},\chi)\neq 0}}1\\
&\ll \sum_{q\leq X}\,\,\sideset{}{^*}\sum_{\chi \mod q}N(1-\epsilon, 2(\log q)^{2/\epsilon},\chi)\\
&\ll  (X^22(\log X)^{2/\epsilon})^{\frac{2\epsilon}{1-\epsilon}}(\log (X2(\log X)^{2/\epsilon}))^{14} \ll X^{\frac45+\epsilon},
\end{align*}
where the last line comes from choosing \footnote{It suffices to choose any $\epsilon < \frac15$. The choice of   $\epsilon=\frac16$ is motivated by convenience of the final expression.}  $\epsilon=\frac16$.
Thus we have, by the standard bound $|L(1,\chi)|\ll \log \mathrm{cond}(\chi)$,  
\[\sum_{\chi\in\mathcal{F}_3(X)\setminus\widetilde{\mathcal{F}}_{\epsilon}(X) }|L(1,\chi)|^{2z}\ll X^{\frac45+\epsilon}\exp(2z\log_2 X).\]
Recall that $y=X^{\alpha}$, for some $0<\alpha<\frac{1}{2}$, which gives
\[\sum_{\chi\in\mathcal{F}_3(X)\setminus\widetilde{\mathcal{F}}_{\epsilon}(X) }|L(1,\chi)|^{2z}\ll X^{\frac45+\epsilon}\exp\left(\frac{\alpha \log X}{2\log_3X}\right)\ll X^{\frac45+2\epsilon}.\]
So far we have seen, 
\[\frac1{|\mathcal{F}_3(X)|}\sum_{\chi\in\mathcal{F}_3(X)}|L(1,\chi)|^{2z}=\frac1{|\mathcal{F}_3(X)|}\sum_{\chi\in\widetilde{\mathcal{F}}_{1/6}(X)}|L(1,\chi)|^{2z}+O\left(X^{-\frac15+\epsilon}\right).
\]
Now, we apply Lemma \ref{DL-momentsprop} to obtain
\begin{align*}\frac1{|\mathcal{F}_3(X)|}\sum_{\chi\in\widetilde{\mathcal{F}}_{{1/6}}(X)}|L(1,\chi)|^{2z}=\frac{1}{|\mathcal{F}_3(X)|}\sum_{\chi\in \widetilde{\mathcal{F}}_{1/6}(X)}\sum_{m,n\ge 1}\frac{d_z(m)d_z(n)\chi(n)\overline{\chi}(m)}{mn}e^{-mn/y}\\+ O\left(\exp\left(-\frac{\alpha\log X}{4\log_2 q}\right)\right).
\end{align*}
In order to apply the orthogonality relation we require the sum to be over $\mathcal{F}_3(X)$ and not $\widetilde{\mathcal{F}}_{1/6}(X)$. We can extend our sum to the full family by noting that
 \begin{align*}
 \sum_{\chi\in \mathcal{F}_3(X)\setminus\widetilde{\mathcal{F}}_{1/6}(X)}\sum_{m,n\ge 1}\frac{d_z(m)d_z(n)\chi(n)\overline{\chi}(m)}{mn}e^{-mn/y}&\ll |\mathcal{F}_3(X)\setminus\widetilde{\mathcal{F}}_{1/6}(X)|\sum_{m,n\ge 1}\frac{d_z(m)d_z(n)}{mn}e^{-mn/y}\\
 &\ll X^{\frac{4}{5}+\epsilon}(\log(3y))^{2\lceil|z|\rceil}\ll X^{\frac{4}{5}+2\epsilon},
 \end{align*}
where the second line follows from \cite[Eq.~(2.4)]{GS} and the fact  that $e^{-mn/y} \leq e^{-(m+n)/y}$ for $m,n>1$,
and the last calculation comes from the assumptions on $y$ and $|z|$. 
 
 Therefore, 
 \begin{align*}
 \frac1{|\mathcal{F}_3(X)|}\sum_{\chi\in\mathcal{F}_3(X)}&|L(1,\chi)|^{2z}=\sum_{m,n\ge 1}\frac{d_z(m)d_z(n)}{mn}e^{-mn/y} \frac1{|\mathcal{F}_3(X)|} \sum_{\chi\in {\mathcal{F}}_{3}(X)}\chi(nm^2)+ O\left(\exp\left(-\frac{\alpha\log X}{4\log_2 q}\right)\right),
 \end{align*}
 and applying \eqref{quotient-family}, we have
 \begin{align*}
 &\frac1{|\mathcal{F}_3(X)|}\sum_{\chi\in\mathcal{F}_3(X)}|L(1,\chi)|^{2z} =\sum_{\substack{m,n\ge 1\\ nm^2=\tinycube}}\frac{d_z(m)d_z(n)e^{-mn/y}}{mn}\prod_{\substack{ p \equiv 1 \mod 3\\p \mid nm}} \frac{p}{p+2} \\
 &+ O\Bigg(X^{-3/8+\epsilon}\sum_{\substack{m,n\ge 1\\ nm^2=\tinycube}}\frac{d_z(m)d_z(n)(d(mn))^2}{mn}e^{-mn/y}\Bigg)\\
 &+\frac1{|\mathcal{F}_3(X)|}\sum_{\substack{m,n\ge 1\\ nm^2\neq\tinycube}}\frac{d_z(m)d_z(n)}{mn}e^{-mn/y}\sum_{\chi\in {\mathcal{F}_3}(X)}\chi(nm^2) + O\left(\exp\left(-\frac{\alpha\log X}{4\log_2X}\right)\right).
 \end{align*}
We recognize the first sum as the main term listed in Theorem \ref{Asympformula} with a smoothing factor $e^{-mn/y}$, which will be removed later.  Therefore, it is necessary to prove that the other terms are negligible.  For the second sum, we have
\begin{align*}
\nonumber X^{-\frac{3}{8}+\epsilon}\sum_{\substack{m,n\ge 1\\ nm^2=\tinycube}}\frac{d_z(m)d_z(n)(d(mn))^2}{mn}e^{-mn/y}&\ll X^{-\frac{3}{8}+\epsilon}\bigg(\sum_{\substack{n\ge 1 }}\frac{d_{\lceil|z|\rceil+2}(n)}{n}e^{-n/y}\bigg)^2\\
&\ll X^{-\frac{3}{8}+\epsilon}(\exp(3y))^{2(\lceil|z|\rceil+2)} \ll X^{-\frac{3}{8}+2\epsilon},
\end{align*}
where we have applied that $e^{-mn/y}\leq  e^{-(m+n)/y}$, $d(mn)\leq d(m)d(n)$ and $d_a(m)d_b(m)\leq d_{a+b}(m)$ for $a, b$ positive integers.

For the sum over non-cubes we apply Lemma \ref{Lem41GS} together with the fact that the radical of $nm^2$ is bounded by $nm$ to get
\begin{align}\label{noncube1}
\sum_{\substack{m,n\ge 1\\ nm^2\neq\tinycube}}\frac{d_z(m)d_z(n)}{mn}e^{-mn/y}\sum_{\chi\in {\mathcal{F}_3}(X)}\chi(nm^2)\ll X^{\frac12}\log X\bigg(\sum_{\substack{n\ge 1}}\frac{d_z(n)(\log(n))^{\frac32}}{\sqrt{n}}e^{-n/y}\bigg)^2.
\end{align}
We now further split \eqref{noncube1} into two sums, for $n\leq y(\log y)^2$ and for $n>y(\log y)^2$. For the first case, using $1\leq\frac{\sqrt{y}\log(y)}{\sqrt{n}}$, we get
\begin{align*}
\sum_{\substack{n\leq y(\log y)^2}}\frac{d_z(n)(\log n)^{\frac32}}{\sqrt{n}}e^{-n/y}\leq\sqrt{y}(\log y)^3 \sum_{n\leq y(\log y)^2}\frac{d_z(n)}{n}e^{-n/y}\ll \sqrt{y}(\log y)^3(\log(3y))^{\lceil|z|\rceil}.
\end{align*}
Replacing in \eqref{noncube1} and using $y=X^{\alpha}$ we see the contribution of $n\leq y(\log y)^2$ is
\begin{align*}
\ll X^{\frac12+\alpha}(\log X)^7\exp\left(\frac{\alpha}{2}\frac{\log(X)}{\log_3 X}\right)\ll X^{\frac12+\alpha+\epsilon},
\end{align*}
which is $O\left(X^{1-\epsilon'}\right)$ when $\alpha<\frac{1}{2}$. 
For the remaining range, we  note that if $m\geq y(\log y)^2$ then $e^{-m/(2y)}\leq e^{-(\log y)^2/2}$, so that 
\begin{align*}
\Bigg(\sum_{\substack{n\geq y(\log y)^2 }}\frac{d_z(n)(\log(n))^{\frac32}}{\sqrt{n}}e^{-n/y}\Bigg)^2\ll e^{-(\log y)^2}(\log X)^3\Bigg(\sum_{\substack{n\geq y(\log y)^2 }}\frac{d_z(n)}{\sqrt{n}}e^{-n/(2y)}\Bigg)^2.
\end{align*}
Now, if $k$ is a positive integer and $x>3$, we have $d_{k}(n)e^{-n/x} \leq e^{k/x}\sum_{a_1a_2\cdots a_k=n}e^{-(a_1+a_2+\cdots+a_k)/x}$ so that 
\begin{align*}
\Bigg(\sum_{\substack{n\geq y(\log y)^2 }}\frac{d_z(n)}{\sqrt{n}}e^{-n/(2y)}\Bigg)^2 \leq \Bigg(e^{1/(2y)}\sum_{a=1}^{\infty}\frac{e^{-a/(2y)}}{a^{\frac12}}\Bigg)^{2\lceil|z|\rceil} \ll y^{2\lceil|z|\rceil}.
\end{align*}
Replacing in \eqref{noncube1},  we see the contribution of  $n>y(\log y)^2$ is
\begin{align*}
\ll X^{\frac12}(\log X)^4e^{-(\log y)^2}y^{2\lceil|z|\rceil}&=\exp\left(\frac12\log X +4\log_2 X -(\log y)^2 +2\lceil|z|\rceil \log y\right)\\
&=\exp\left(\frac12\log X +4\log_2 X -(\alpha\log X)^2 +  \frac{\alpha^2(\log X)^2}{2\log_2X\log_3X}\right)\\  
& \ll X^{\frac12+\epsilon}.
\end{align*}
Thus,  we have proven that
\begin{align}  \label{with-ET}
 \frac1{|\mathcal{F}_3(X)|}\sum_{\chi\in\mathcal{F}_3(X)}|L(1,\chi)|^{2z} &=  \sum_{\substack{m,n\ge 1\\ nm^2=\tinycube}}\frac{d_z(m)d_z(n)e^{-mn/y}}{mn}
 \prod_{\substack{ p \equiv 1 \mod 3\\p \mid nm}} \frac{p}{p+2} + O\left(\exp\left(-\frac{\alpha\log X}{4\log_2X}\right)\right),
 \end{align}
and the last remaining step is the removal of the smoothing factor. Note that $1-e^{-t}\ll t^{\beta}$ for any $\beta,t>0$, thus 
\begin{align} \nonumber
\sum_{\substack{m,n\ge 1\\ nm^2=\tinycube}}\frac{d_z(m)d_z(n)(1-e^{-mn/y})}{mn}  \prod_{\substack{ p \equiv 1 \mod 3\\p \mid nm}} \frac{p}{p+2}
&\ll \sum_{\substack{m,n\ge 1\\ nm^2=\tinycube}}\frac{d_z(m)d_z(n) (\frac{mn}y)^{\beta}}{mn}\\ \label{removing-smoothing}
&\ll y^{-\beta}\sum_{r\geq 1}\frac{d_{|z|}(r)^2}{r^{2-2\beta}}\left(\sum_{s\geq1}\frac{d_{|z|}(s)^3}{s^{3-3\beta}}\right)^2,
\end{align}
where we have used $nm^2=\cube$ and the substitution $n=rs^3$ and $m=rt^3$.   For the sum over $r$, we use \cite[Lemma 3.3]{Lam2011}, with the choice $\beta=\frac{3}{\log_2 X}$,   to obtain
\[\sum_{r\geq1}\frac{d_{|z|}(r)^2}{r^{2-2\beta}}\leq \exp((2+o(1))\lceil|z|\rceil \log_2\lceil|z|\rceil).\]
For the sum over $s$, we use  the multiplicativity of $d_z(n)$, which gives
$$
\sum_{s=1}^{\infty}\frac{d_{|z|}(s)^3}{s^{3-3\beta}} =\prod_p \left(1+\frac{d_{|z|}(p)^3}{p^{3-3\beta}}+\frac{d_{|z|}(p^2)^3}{p^{6-6\beta}}+\cdots\right),
$$
the recursive property $\Gamma(z+1)=z\Gamma(z)$, and the fact that for any integer $r\geq 1$, we have
$$
d_{|z|}(p^r)=\frac{\Gamma(|z|+r)}{\Gamma(|z|)\, r!}=\frac{|z|(|z|+1))\cdots (|z|+r-1)}{r!}.
$$
Splitting the Euler product into two pieces depending on the size of $p$ relative to $|z|$, we have 
\begin{align*}
\sum_{s=1}^{\infty}\frac{d_{|z|}(s)^3}{s^{3-3\beta}} &=\prod_{p\leq |z|+2}\left(1+\frac{d_{|z|}(p)^3}{p^{3-3\beta}}+\frac{d_{|z|}(p^2)^3}{p^{6-6\beta}}+\cdots\right)\prod_{p>|z|+2}\left(1+O\left(\frac{|z|^3}{p^{3-3\beta}}\right)\right)\\
&\ll \prod_{p\leq |z|+2}\left(1+\frac{d_{|z|}(p)}{p^{1-\beta}}+\frac{d_{|z|}(p^2)}{p^{2-2\beta}}+\cdots\right)^3 \prod_{p>|z|+2}\left(1+O\left(\frac{|z|^3}{p^{3-3\beta}}\right)\right)\\
&\ll \prod_{p\leq |z|+2}\left(1+\frac{3(|z|+1)}{p^{1-\beta}}+\cdots\right)^3 \prod_{p>|z|+2}\left(1+O\left(\frac{|z|^3}{p^{3-3\beta}}\right)\right)\\
&\ll \prod_{p\leq |z|+2}\left(1-\frac{1}{p^{1-\beta}}\right)^{-3|z|}\prod_{p>|z|+2}\left(1+O\left(\frac{|z|^3}{p^{3-3\beta}}\right)\right), 
\end{align*} 
and using Mertens' theorem $\prod_{p \leq y}\left(1-\frac{1}{p}\right)\sim \frac{e^{-\gamma}}{\log y}$, we  obtain
\begin{align*}
\sum_{s=1}^{\infty}\frac{(d_{|z|}(s))^3}{s^{3-3\beta}} &\ll (e^{\gamma}\log(|z|+2))^{3|z|}\prod_{p>|z|+2}\left(1+O\left(\frac{|z|^3}{p^3}\right)\right)\\
&\ll (e^2 \log(|z|+2))^{3|z|}.
\end{align*}
Replacing in \eqref{removing-smoothing}, the error term from removing the smoothing is 
 \begin{align*}
 &\ll \exp(-\beta\log y +12|z|+6|z|\log_2|z| +(2+o(1))\lceil|z|\rceil \log_2\lceil|z|\rceil) ) \\
&\ll \exp\left(-\beta \alpha \log X+\frac{3\alpha \log X}{\log_2X\log_3X} +\frac{3\alpha\log X}{2\log_2X\log_3X}\log \left(\log_2X -\log_3X-\log_4X\right)\right. \\  
&\hspace{1in}  + (2+o(1))\frac{\alpha\log X}{4\log_2X\log_3X}\log \left(\log_2X -\log_3X-\log_4X\right)\Bigg) \\
& \ll \exp\left(-\frac{\alpha \log X}{\log _2 X}\right),
\end{align*}
which is smaller than the error term from \eqref{with-ET}. 
Recalling that $\alpha<1/2$, we choose $\alpha=1/4$. 
This completes the proof. 
\end{proof}

\section{Working with the random variables}\label{working with the RV}

Let $\X(m)$ and $L(1, \X)$ be the random variables defined in the introduction. The main result of this section is the following theorem, which will be used to prove Theorems \ref{thm-RV} and  \ref{theorem-relating-to-RV}.

\begin{thm}\label{main thm for Distribution of RV} 
For $\tau$ large,  we have
\begin{align*}
\Phi(\tau) = \frac{\E \left( |L(1, \X)|^{2\kappa} \right) ( e^\gamma \tau)^{-2\kappa}}{\kappa \sqrt{2 \pi \mathcal{L}''(\kappa)}} \left( 1 +  O\left(\sqrt{\frac{\log \kappa}{\kappa}}\right) \right)
\end{align*}
and
\[
\Psi(\tau)=\frac{\E \left( |L(1, \X)|^{-2\widetilde{\kappa}} \right)\zeta(3)^{ \widetilde{\kappa}} }{\widetilde{\kappa} \sqrt{2 \pi \widetilde{\mathcal{L}}''(\widetilde{\kappa})} {( e^\gamma \tau^2)^{\widetilde{\kappa}}}} \left( 1 +  O\left(\sqrt{\frac{\log \widetilde{\kappa}}{ \widetilde{\kappa}}}\right) \right),
\]
where $\Phi(\tau)$ and $\Psi(\tau)$ are given by \eqref{def:Phi} and \eqref{def:Psi} respectively, $\mathcal{L}, \widetilde{\mathcal{L}}$ are defined by \eqref{mathcalL} and \eqref{tilde-mathcalL}, and  $\kappa, \widetilde{\kappa}$ are the unique solutions to \eqref{saddlept} and \eqref{saddleptmin}.

\end{thm}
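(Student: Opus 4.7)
My plan is to obtain both asymptotics by a classical saddle-point (Laplace inversion) argument, treating the upper and lower tails in parallel. Let $Y=\log|L(1,\X)|-\gamma$, so that $\Phi(\tau)=\P(Y>\log\tau)$; the moment generating function of $Y$ is $\E(e^{2wY})=e^{-2\gamma w}\E(|L(1,\X)|^{2w})$, which is analytic in a right half-plane thanks to the convergence established in Lemma \ref{expectation}. Starting from the standard identity
\[
\Phi(\tau)=\frac{1}{2\pi i}\int_{(c)}\E(|L(1,\X)|^{2w})\,\frac{(e^{\gamma}\tau)^{-2w}}{w}\,dw \qquad (c>0),
\]
valid once moments of all orders are finite, I write the integrand as $w^{-1}\exp\bigl(\mathcal{L}(w)-2w\log(e^{\gamma}\tau)\bigr)$ with $\mathcal{L}(w)=\log\E(|L(1,\X)|^{2w})$. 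The saddle point equation $\mathcal{L}'(\kappa)=2\log(e^{\gamma}\tau)$ is precisely \eqref{saddlept}, and it has a unique positive solution because $\mathcal{L}$ is strictly convex (a consequence of $\mathcal{L}''>0$ which comes from positivity of the variance of $\log|L(1,\X)|$ under the tilted measure).

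The core of the argument is to shift the contour to $\re(w)=\kappa$ and apply Laplace's method. Writing $w=\kappa+it$ and Taylor expanding,
\[
\mathcal{L}(\kappa+it)-2(\kappa+it)\log(e^{\gamma}\tau)=\mathcal{L}(\kappa)-2\kappa\log(e^{\gamma}\tau)-\tfrac{t^{2}}{2}\mathcal{L}''(\kappa)+O\bigl(|t|^{3}\,|\mathcal{L}'''(\kappa+O(t))|\bigr),
\]
the linear term in $t$ cancels by the saddle-point equation. One then needs to (i) restrict the integral to a window $|t|\le T_{0}$ where the quadratic approximation is valid, producing a Gaussian integral of value $\sqrt{2\pi/\mathcal{L}''(\kappa)}$ up to the stated relative error, and (ii) show that the tails $|t|>T_{0}$ contribute negligibly. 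The natural window is $T_{0}\asymp\sqrt{\log\kappa/\kappa}\cdot\mathcal{L}''(\kappa)^{-1/2}$, since this is where the cubic remainder $|t|^{3}\mathcal{L}'''(\kappa)$ becomes comparable to $\sqrt{\log\kappa/\kappa}$; this is the origin of the error term in the statement. Dividing $w^{-1}$ by its value $\kappa^{-1}$ at the saddle contributes a factor $1+O(T_{0}/\kappa)$, which is absorbed into the same error. This gives the stated formula for $\Phi(\tau)$.

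For the lower tail, the same scheme applies with the substitution $w\mapsto -w$. Writing
\[
\Psi(\tau)=\frac{1}{2\pi i}\int_{(c)}\E(|L(1,\X)|^{-2w})\,\frac{\bigl(\zeta(3)/(e^{\gamma}\tau^{2})\bigr)^{w}}{w}\,dw,
\]
the saddle point equation $\widetilde{\mathcal{L}}'(\widetilde{\kappa})=\log(e^{\gamma}\tau^{2}/\zeta(3))$ is exactly \eqref{saddleptmin}, and Laplace's method around $\widetilde{\kappa}$ yields the second asymptotic. The appearance of $\tau^{2}$ rather than $\tau$ ultimately reflects the asymmetric growth $\widetilde{\mathcal{L}}(w)\sim cw^{2}\log w$ versus $\mathcal{L}(w)\sim c'w\log w$, which is visible already from the different rates in Theorem \ref{thm-RV}.

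The main obstacle is the quantitative control of the derivatives of $\mathcal{L}$ and $\widetilde{\mathcal{L}}$ at the saddle points: establishing sharp lower bounds on $\mathcal{L}''(\kappa)$ and $\widetilde{\mathcal{L}}''(\widetilde{\kappa})$, along with polynomial upper bounds on $\mathcal{L}'''$, $\widetilde{\mathcal{L}}'''$ in a neighborhood of the saddle. This requires opening up the Euler product for $\E(|L(1,\X)|^{2w})$ and extracting, for large $\kappa$, the dominant contribution from primes of size up to roughly $e^{\tau}$ where $\X(p)=1$ is the "maximizing" value (respectively $\X(p)\in\{\omega_{3},\omega_{3}^{2}\}$ in the lower-tail case); the constants $C_{\max}, C_{\min}$ of \eqref{constantC0}, \eqref{minimum value} arise from the convergent Euler products that remain after this extraction. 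Once these derivative estimates are in place, the remaining steps — truncating and bounding the vertical integral away from the saddle by convexity of $\mathcal{L}$ — are standard.
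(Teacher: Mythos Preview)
Your overall strategy—saddle-point analysis on the inverse Mellin/Laplace transform of the moment generating function—is exactly the one the paper uses, and the identification of the saddle points $\kappa,\widetilde\kappa$ via \eqref{saddlept}, \eqref{saddleptmin} is correct. However, two technical points that you treat as routine are in fact where the work lies, and the paper handles them differently.

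First, the ``standard identity'' $\Phi(\tau)=\tfrac{1}{2\pi i}\int_{(c)}\E(|L(1,\X)|^{2w})(e^{\gamma}\tau)^{-2w}w^{-1}\,dw$ is not directly usable: the kernel $w^{-1}$ is not absolutely integrable on a vertical line, so exchanging expectation and integral, and later shifting the contour and truncating, require justification. The paper avoids this by smoothing the indicator of $\{y>1\}$ with the kernel $(e^{\lambda s}-1)/(\lambda s^{2})$ of Lemma~\ref{approx-phi}, sandwiching $\Phi(\tau)$ between two absolutely convergent integrals; the auxiliary parameter $\lambda$ is taken to be $\kappa^{-2}$ at the end. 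Without such a device your argument has a gap at the very first step.

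Second, ``bounding the vertical integral away from the saddle by convexity of $\mathcal{L}$'' is not enough. Convexity of $\mathcal{L}$ on the real axis says nothing about the size of $|\E(|L(1,\X)|^{2(\kappa+it)})|$ for large $|t|$; what is needed is a quantitative decay estimate of the ratio $|\E(|L(1,\X)|^{2(\kappa+it)})|/\E(|L(1,\X)|^{2\kappa})$. The paper obtains this in Lemmas~\ref{DL-lemma4.5}--\ref{DL-lemma4.6}, by bounding each local factor $|E_{p}(\kappa+it)|/E_{p}(\kappa)$ and summing over primes $p\asymp\kappa$, which yields $\exp(-b_{2}t^{2}/(\kappa\log\kappa))$ for $|t|\le\kappa/4$ and $\exp(-b_{2}|t|/\log|t|)$ beyond. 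This is the input that allows the truncation at $T=\kappa^{\delta}$, $\tfrac12<\delta<1$; your proposed window $T_{0}\asymp\sqrt{\log\kappa/\kappa}\,\mathcal{L}''(\kappa)^{-1/2}\asymp\log\kappa$ is far narrower than the Gaussian width $\mathcal{L}''(\kappa)^{-1/2}\asymp\sqrt{\kappa\log\kappa}$ and would not capture the main contribution.

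A minor correction: the asymmetry $\tau$ versus $\tau^{2}$ does not come from $\widetilde{\mathcal{L}}(w)\sim cw^{2}\log w$. Proposition~\ref{L-est-prop} gives $\mathcal{L}(r)\sim 2r\log\log r$ and $\widetilde{\mathcal{L}}(r)\sim r\log\log r$; the factor of two in the derivative is what forces $\log\log\widetilde\kappa\sim 2\log\tau$, hence $\log\widetilde\kappa\sim\tau^{2}$.
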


\subsection{Expectations}

\begin{lem} \label{expectation}
Let  $m= p_1^{a_1}\cdots p_k^{a_k}$, and let $\X(p)$ be the independent random variables defined in the introduction, taking the value 0 with probability 
$$\delta_p := \begin{cases} 0 & \text{if}\; p \equiv 2 \mod{3} \;\text{or}\; p=3, \\
\frac{2}{p+2} & \text{if}\; p \equiv 1 \mod{3}, \end{cases}$$
 and each of the values $1, \omega_3, \omega_3^2$ with probability $\frac{\alpha_p}{3}$ where
\begin{equation}\label{eq:alphas}
 \alpha_p := \begin{cases} 1 & \text{if}\; p \equiv 2 \mod{3} \;\text{or}\; p=3, \\
\frac{p}{p+2} & \text{if}\; p \equiv 1 \mod{3}. \end{cases}
 \end{equation}
 Notice that $\alpha_p=1-\delta_p$.
Then
$$\mathbb{E}(\mathbb{X}(m)) = \begin{cases}
                            \prod_{j=1}^k (1-\delta_{p_j}) = \prod_{\substack{p \equiv 1 \mod 3 \\ p \mid m}} \frac{p}{p+2} 
                            & \mbox{if $m=\cube$,}\\
                            0 & \mbox{otherwise.}
                           \end{cases}$$
Furthermore,  $\sum_{m=1}^\infty \mathbb{E}\left(\frac{\mathbb{X}(m)}{m}\right)\leq \zeta(3)$ and $ \sum_{m=1}^\infty \mathrm{Var}\left(\frac{\mathbb{X}(m)}{m}\right)\leq \zeta(6)$.
\end{lem}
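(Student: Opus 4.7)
The plan is to reduce every claim to a per-prime computation, using the multiplicativity of $\mathbb{X}$ together with the independence of $\{\mathbb{X}(p)\}_p$ across primes. First I would compute $\mathbb{E}(\mathbb{X}(p)^a)$ for each prime $p$ and integer $a\ge 1$: conditional on $\mathbb{X}(p)\ne 0$, the value of $\mathbb{X}(p)$ is uniform on $\{1,\omega_3,\omega_3^2\}$, and the identity $1+\omega_3^a+\omega_3^{2a}=3\cdot\mathbf{1}[3\mid a]$ yields
\[
\mathbb{E}(\mathbb{X}(p)^a)=\alpha_p\cdot\mathbf{1}[3\mid a],
\]
where $\alpha_p=1-\delta_p$ is the probability that $\mathbb{X}(p)\ne 0$, as defined in \eqref{eq:alphas}.

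By the independence of the $\mathbb{X}(p)$, I then have $\mathbb{E}(\mathbb{X}(m))=\prod_{p\mid m}\mathbb{E}(\mathbb{X}(p)^{a_p})$, which vanishes unless every exponent $a_p$ is divisible by $3$, i.e., unless $m$ is a perfect cube. When $m=\tinycube$, the product collapses to $\prod_{p\mid m}\alpha_p$; since $\alpha_p=1$ for $p=3$ and $p\equiv 2\mod 3$, only the primes $p\equiv 1\mod 3$ survive, yielding the claimed formula $\prod_{p\mid m,\, p\equiv 1\mod 3} p/(p+2)$.

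For the two series bounds, only cubes $m=n^3$ contribute to $\sum_m\mathbb{E}(\mathbb{X}(m)/m)$, and the expectation is bounded by $1$ on that set, so the sum is at most $\sum_{n\ge 1}1/n^3=\zeta(3)$. For the variance, I would rerun the per-prime argument on $\mathbb{X}(m)^2$: since $\gcd(2,3)=1$, the condition $3\mid 2a_p$ is equivalent to $3\mid a_p$, so $\mathbb{E}(\mathbb{X}(m)^2)=\prod_{p\mid m,\, p\equiv 1\mod 3}p/(p+2)$ when $m$ is a cube and zero otherwise. Combining with the expectation formula, $\mathrm{Var}(\mathbb{X}(m))=\mathbb{E}(\mathbb{X}(m)^2)-\mathbb{E}(\mathbb{X}(m))^2$ equals $q_m(1-q_m)\le 1$ when $m$ is a cube (with $q_m$ the Euler product displayed above) and vanishes otherwise, so the series is bounded by $\sum_{n\ge 1}1/n^6=\zeta(6)$. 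No serious obstacle arises; the only point requiring care is the cube indicator, which isolates exactly the terms that contribute.
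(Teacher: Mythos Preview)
Your proposal is correct and follows essentially the same route as the paper: both reduce to the per-prime computation $\mathbb{E}(\mathbb{X}(p)^a)=\alpha_p\cdot\mathbf{1}[3\mid a]$ via $1+\omega_3^a+\omega_3^{2a}=3\cdot\mathbf{1}[3\mid a]$, then use independence to get the cube criterion, and for the variance bound both observe that $\mathbb{E}(\mathbb{X}(m)^2)=\mathbb{E}(\mathbb{X}(m))$ on cubes and vanishes off cubes (since $m^2$ is a cube iff $m$ is). The paper's write-up is slightly terser but the argument is the same.
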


\begin{proof}
By independence,
$$\mathbb{E}(\mathbb{X}(m))= \prod_{j=1}^k \mathbb{E}(\mathbb{X}(p_j)^{a_j}).
$$
 If $a_j$ is divisible by 3, then 
$$\mathbb{E}(\mathbb{X}(p_j)^{a_j})=\frac{\alpha_{p_j}}{3}+\frac{\alpha_{p_j}}{3}+\frac{\alpha_{p_j}}{3}=1-\delta_{p_j}.$$
On the other hand, if $a_j\equiv \pm 1 \mod{3}$, 
$$\mathbb{E}(\mathbb{X}(p_j)^{a_j})=\frac{\alpha_{p_j}}{3}+\omega_3^{\pm 1} \frac{\alpha_{p_j}}{3}+\omega_3^{\mp 1} \frac{\alpha_{p_j}}{3}=0.$$
Finally, we have 
\[\sum_{m=1}^\infty \mathbb{E}\left(\frac{\mathbb{X}(m)}{m}\right)=\sum_{n=1}^\infty \frac{\mathbb{E}(\mathbb{X}(n^3))}{n^3}\leq \zeta(3),\]
and 
\begin{align*}
\sum_{m=1}^\infty \mathrm{Var}\left(\frac{\mathbb{X}(m)}{m}\right)=&\sum_{m\not= \tinycube}
\frac{\mathbb{E}(\mathbb{X}(m)^2)}{m^2}+\sum_{n=1}^\infty\frac{1}{n^6} \left[\mathbb{E}\left(\mathbb{X}(n^3)^2\right)-\mathbb{E}\left(\mathbb{X}(n^3)\right)^2\right]\\
=&\sum_{n=1}^\infty \frac{1}{n^6} \left[\mathbb{E}\left(\mathbb{X}(n^3)\right)-\mathbb{E}\left(\mathbb{X}(n^3)\right)^2\right]\leq  \zeta(6).
\end{align*}

\end{proof}

\begin{rem} Recall that Kolmogorov's three-series Theorem states that for a family $\{X_m\}_{m\in \mathbb{N}}$ of independent random variables, the random series $\sum_{m=1}^\infty X_m$ converges almost surely in $\R$ if only if the following conditions hold for some $A>0$:
\begin{enumerate}
\item[(i)] $\displaystyle{\sum_{m=1}^\infty \mathbb{P}\left(\left|X_m\right|\geq A\right)<\infty.}$
\item[(ii)]   $\displaystyle{\sum_{m=1}^\infty \mathbb{E}\left(X_m {\bf 1}_{\{|X_m|\leq A\}} \right)<\infty.}$
\item[(iii)]  $\displaystyle{\sum_{m=1}^\infty \mathrm{Var}\left(X_m {\bf 1}_{\{|X_m|\leq A\}}\right)<\infty.}$
\end{enumerate}

 Lemma \ref{expectation} guarantees the conditions for Kolmogorov's three-series Theorem for the family of random variables $X_m:=\frac{\mathbb{X}(m)}{m}$ in the case $A=1$.  We thus conclude that $\sum_{m=1}^\infty \frac{\mathbb{X}(m)}{m}$ converges almost surely in $\R$.
\end{rem}

For any $z \in \C$, let $$E_p(z) := \mathbb{E}\left(\left|1 - \frac{\mathbb{X}(p)}{p} \right|^{-2z} \right).$$

\begin{lem} 
For $z\in \C$,
\[
E_p(z)=1-\alpha_p + \frac{\alpha_p}{3} \left( 1-\frac{2}{p} + \frac{1}{p^2}\right)^{-z}  + \frac{2 \alpha_p}{3} \left(1+\frac{1}{p} + \frac{1}{p^2} \right)^{-z}.
\]
\end{lem}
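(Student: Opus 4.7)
The plan is a straightforward direct computation, conditioning on the four possible values of $\mathbb{X}(p)$. Since $\mathbb{X}(p)$ takes the values $0, 1, \omega_3, \omega_3^2$ with probabilities $\delta_p, \alpha_p/3, \alpha_p/3, \alpha_p/3$ respectively (where $\delta_p = 1-\alpha_p$ by the definitions in Lemma~\ref{expectation}), I will unpack the expectation as a sum of four terms:
\begin{equation*}
E_p(z) = \delta_p\cdot 1 + \frac{\alpha_p}{3}\left|1-\tfrac{1}{p}\right|^{-2z} + \frac{\alpha_p}{3}\left|1-\tfrac{\omega_3}{p}\right|^{-2z} + \frac{\alpha_p}{3}\left|1-\tfrac{\omega_3^2}{p}\right|^{-2z}.
\end{equation*}

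The $\mathbb{X}(p)=0$ term is simply $1-\alpha_p$, and the $\mathbb{X}(p)=1$ term gives $(1-1/p)^{-2z}=(1-2/p+1/p^2)^{-z}$. The one small computation to verify is the modulus-squared for the cubic root of unity cases. I would expand
\begin{equation*}
\left|1-\tfrac{\omega_3}{p}\right|^2 = \left(1-\tfrac{\omega_3}{p}\right)\left(1-\tfrac{\overline{\omega_3}}{p}\right) = 1 - \tfrac{\omega_3+\omega_3^2}{p} + \tfrac{1}{p^2},
\end{equation*}
using $\overline{\omega_3}=\omega_3^2$. Since $1+\omega_3+\omega_3^2 = 0$, we have $\omega_3+\omega_3^2 = -1$, yielding $|1-\omega_3/p|^2 = 1 + 1/p + 1/p^2$. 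The same value arises for $\omega_3^2$ by conjugation symmetry, so these two terms together contribute $\frac{2\alpha_p}{3}(1+1/p+1/p^2)^{-z}$.

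Combining the four contributions and writing $\delta_p = 1-\alpha_p$ gives the claimed formula. There is no real obstacle here; the only ``content'' is the elementary identity $\omega_3+\omega_3^2 = -1$, which collapses the two complex-valued cases into a single real factor and explains why $E_p(z)$ is real for real $z$.
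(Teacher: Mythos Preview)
Your proof is correct and follows essentially the same approach as the paper: both compute the expectation by conditioning on the four values of $\mathbb{X}(p)$ and evaluating $\left|1-\tfrac{c}{p}\right|^2$ in each case, using $\omega_3+\omega_3^2=-1$ to collapse the two nontrivial-root contributions. The paper's write-up is slightly more compressed (it passes through $\mathbb{X}(p)+\overline{\mathbb{X}}(p)$ and $\mathbb{X}(p)\overline{\mathbb{X}}(p)$ in one line), but the content is identical.
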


\begin{proof} We compute
  \begin{align*}E_p(z) = \mathbb{E}\left(\left|1 - \frac{\mathbb{X}(p)}{p} \right|^{-2z} \right) &= \mathbb{E}\left(\left|1 - \frac{\mathbb{X}(p) + \overline{\mathbb{X}}(p)}{p} +  \frac{\mathbb{X}(p)  \overline{\mathbb{X}}(p)}{p^2}\right|^{-z} \right)\\
&=  1-\alpha_p + \frac{\alpha_p}{3} \left( 1-\frac{2}{p} + \frac{1}{p^2}\right)^{-z}  + \frac{2 \alpha_p}{3} \left(1+\frac{1}{p} + \frac{1}{p^2} \right)^{-z},
               \end{align*}
               and the statement follows.
\end{proof} 

For any real number $r>0$, we define
\begin{align} \label{mathcalL}
\mathcal{L}(r) &:=\log(\mathbb{E}(|L(1,\mathbb{X})|^{2r}))=\sum_p\log E_p(r),\\ \label{tilde-mathcalL}
\widetilde{\mathcal{L}}(r) &:=\log(\mathbb{E}(|L(1,\mathbb{X})|^{-2r}))=\sum_p\log E_p(-r).
\end{align}

\begin{prop}\label{L-est-prop}
For any real number $r\geq 4$ we have 
\begin{align}\label{L-estimate}
&\mathcal{L}(r)=2r \log \log r +2r \gamma + \frac{2r(C_{\max}-1)}{\log r}+
 O\left( \frac{r}{(\log r)^2}\right), \\
 &\widetilde{\mathcal{L}}(r)=r \log \log r +r \gamma + \frac{r(C_{\min}-1)}{\log r}-r\log \zeta(3)+
 O\left( \frac{r}{(\log r)^2}\right), \label{L-estimatetilde}
\end{align}
and 
 \begin{align}
  \begin{split}\label{Lprime-estimate}
  &\mathcal{L}'(r)  = 2 \log \log r +2\gamma+\frac{2C_{\max}}{\log r} +O\left(\frac{1}{(\log r)^2} \right),\\
  &\widetilde{\mathcal{L}}'(r)= \log \log r +\left(\gamma-\log \zeta(3)\right)+\frac{C_{\min}}{\log r} +O\left(\frac{1}{(\log r)^2} \right),
  \end{split}
 \end{align}
 where
 \begin{align}\label{constantC0}
2C_{\max}:=\int_0^1 \frac{2e^{2t}-2e^{-t}}{t(e^{2t}+2e^{-t})}dt+\int_1^\infty \frac{-6}{t(e^{3t}+2)}dt\approx1.97455\ldots
 \end{align}
 and 
 \begin{align}\label{minimum value}
 C_{\min}:= \int_{0}^1 \frac{2e^{t}-2e^{-2t}}{t(e^{-2t}+2e^{t})}dt +\int_1^{\infty} \frac{-3e^{-2t}}{t(e^{-2t}+2e^{t})}dt\approx 1.40459\ldots.
 \end{align}
Moreover, for all real numbers $y, t$ such that $|y|\geq 3$ and $|t|\leq|y|$ we have 
\begin{align}\label{Lhigherderivs-estimate} 
\mathcal{L}''(y)\asymp \frac{2}{|y| \log|y|} & \text{ and } \mathcal{L}'''(y+it)\ll \frac{1}{|y|^2 \log|y|}.
\end{align}
The same holds for the derivatives of $\widetilde{\mathcal{L}}(r)$.
\end{prop}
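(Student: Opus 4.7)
The plan is to analyze the prime sum $\mathcal{L}(r)=\sum_{p}\log E_p(r)$ by splitting the primes into ranges determined by the scale $v_p:=2r/p$, then converting the transitional-range contribution into a definite integral via the prime number theorem. The analysis for $\widetilde{\mathcal{L}}(r)=\mathcal{L}(-r)$ is parallel, but the roles of the two exponentials $e^{v}$ and $e^{-v/2}$ coming from $(1-1/p)^{-2r}$ and $(1+1/p+1/p^{2})^{-r}$ are reversed, so the leading growth rate halves from $2r\log\log r$ to $r\log\log r$ (reflecting growth of $e^{r/p}$ rather than $e^{2r/p}$), and a term $-r\log\zeta(3)$ emerges from the bookkeeping identity $1+1/p+1/p^{2}=(1-1/p^{3})/(1-1/p)$ applied to $\sum_{p}r\log(1+1/p+1/p^{2})$.

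\emph{Step 1: explicit form of $E_p(r)$ and uniform approximation.} Since $|1-\mathbb{X}(p)/p|^{2}\in\{1,(1-1/p)^{2},1+1/p+1/p^{2}\}$, one has
\begin{equation*}
E_p(r)=(1-\alpha_p)+\frac{\alpha_p}{3}(1-1/p)^{-2r}+\frac{2\alpha_p}{3}(1+1/p+1/p^{2})^{-r}.
\end{equation*}
Taylor-expanding inside the exponents gives, for $p\geq\sqrt{r}$,
\begin{equation*}
E_p(r)=h_p(2r/p)\bigl(1+O(r/p^{2})\bigr),\qquad h_p(v):=(1-\alpha_p)+\frac{\alpha_p}{3}e^{v}+\frac{2\alpha_p}{3}e^{-v/2},
\end{equation*}
with $h_p(v)=h(v)+O(1-\alpha_p)$ and $h(v):=\tfrac{1}{3}e^{v}+\tfrac{2}{3}e^{-v/2}$.

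\emph{Step 2: three ranges.} I split the primes into (a) $p\leq\sqrt{r}$, (b) $\sqrt{r}<p\leq r(\log r)^{2}$, and (c) $p>r(\log r)^{2}$. In (a) the term $(\alpha_p/3)(1-1/p)^{-2r}$ is exponentially dominant, so $\log E_p(r)=-2r\log(1-1/p)+\log(\alpha_p/3)+O(e^{-3r/p})$, and Mertens' theorem $\sum_{p\leq x}\log(1-1/p)^{-1}=\log\log x+\gamma+O(1/\log x)$ produces the two leading terms $2r\log\log r+2r\gamma$ together with a boundary contribution $-2r\log 2$ from the truncation at $\sqrt{r}$ and a convergent constant from $\sum_{p}\log(\alpha_p/3)$. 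In (c), $\log h(v)=O(v^{2})$ so the total contribution is $O\!\bigl(\sum_{p>r(\log r)^{2}}r^{2}/p^{2}\bigr)=O(1/\log r)$.

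\emph{Step 3: integral representation and the constant $C_{\max}$.} For range (b), I apply the prime number theorem with classical error term to convert $\sum_{p}\log h_p(2r/p)$ into $\int\log h(2r/u)\,du/\log u$, substitute $u=2r/t$, and expand $\log(2r/t)=\log r+\log 2-\log t$. The contribution is $(2r/\log r)\mathcal{I}+O(r/(\log r)^{2})$, where $\mathcal{I}$ is a regularized integral. Using $(\log h(2t))'=2(e^{3t}-1)/(e^{3t}+2)$ and the asymptotics $\log h(2t)\sim t^{2}$ as $t\to 0$ and $\log h(2t)=2t-\log 3+O(e^{-3t})$ as $t\to\infty$, the divergent piece $\int_{1}^{\infty}2/t\,dt$ of the transitional integral is precisely cancelled by the $-2r\log 2$ and associated boundary corrections from range (a), leaving
\begin{equation*}
\mathcal{I}=\int_{0}^{1}\frac{(\log h(2t))'}{t}\,dt+\int_{1}^{\infty}\frac{(\log h(2t))'-2}{t}\,dt = 2C_{\max}-2,
\end{equation*}
where the last equality uses $(\log h(2t))'-2=-6/(e^{3t}+2)$. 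This assembles into \eqref{L-estimate}.

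\emph{Step 4: derivatives.} Differentiation in $r$ is term-by-term legitimate; the same three-range analysis with $\partial_r\log E_p(r)$ replacing $\log E_p(r)$ yields \eqref{Lprime-estimate}, since the derivative of the main term $2r\log\log r+2r\gamma+2r(C_{\max}-1)/\log r$ is $2\log\log r+2\gamma+2C_{\max}/\log r+O(1/(\log r)^{2})$. For higher derivatives the contribution becomes localized to the transitional range $p\sim r$, since each additional $r$-derivative of $\log E_p(r)$ contributes a factor $\sim 1/p$ which in range (a) is summable ($O(1/\log r)$) while in the transitional range it produces a factor $1/r$. Combined with the PNT density $1/\log r$, this gives $\mathcal{L}''(y)\asymp 2/(|y|\log|y|)$ and $\mathcal{L}'''(y+it)\ll 1/(|y|^{2}\log|y|)$. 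The complex shift $|t|\leq|y|$ only multiplies the relevant exponentials by bounded phase factors in $E_p(y+it)$, so the modulus bounds on $\log E_p$ and its derivatives go through unchanged.

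\emph{Main obstacle.} The technical heart is the bookkeeping in Step 3 that identifies the specific regularized integral in \eqref{constantC0} as the actual limit of the prime sum. Three pieces must conspire to cancel: (i) the $-2r\log 2$ from splitting range (a) at $\sqrt{r}$; (ii) the divergent tail $\int_{1}^{\infty}2/t\,dt$ in the transitional integral arising from $\log h(2t)\sim 2t$ at infinity; and (iii) the non-uniformity of the approximation $\log(2r/t)\approx\log r$ near the boundaries of the transitional range. These must match exactly the particular split at $t=1$ and the specific integrand subtractions in the definition of $C_{\max}$. Handling all three cleanly requires sharp forms of Mertens' theorem and the PNT with power-saving error, together with an integration by parts that trades the $\log h(2t)/t^{2}$ kernel for the $(\log h(2t))'/t$ kernel in \eqref{constantC0}. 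The analogous identification of $C_{\min}$ proceeds by the same scheme applied with $r\mapsto-r$; the $-r\log\zeta(3)$ in \eqref{L-estimatetilde} emerges from $\sum_{p}\log(1+1/p+1/p^{2})$ via $1+1/p+1/p^{2}=(1-1/p^{3})/(1-1/p)$ and the Mertens/Euler-product machinery.
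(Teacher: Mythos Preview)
Your approach is essentially the paper's: split the prime sum $\sum_p\log E_p(r)$ by the size of $r/p$, extract the leading terms via Mertens' third theorem, convert the transitional range to a definite integral via the prime number theorem, and identify that integral with $2C_{\max}-2$ (resp.\ $C_{\min}-1$) through integration by parts. The treatment of $\widetilde{\mathcal L}$ and the appearance of $-r\log\zeta(3)$ via $1+1/p+1/p^2=(1-1/p^3)/(1-1/p)$ are also exactly as in the paper.

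The paper differs in one execution detail that dissolves what you call the ``main obstacle.'' Rather than truncating the Mertens sum at $\sqrt r$ and then cancelling the resulting $-2r\log 2$ against the divergent tail of the transitional integral, the paper builds the subtraction into the auxiliary function: it sets $f(t)=\log\bigl((e^{2t}+2e^{-t})/3\bigr)$ for $0\le t<1$ and $f(t)=\log\bigl((e^{2t}+2e^{-t})/3\bigr)-2t$ for $t\ge 1$, so that $f$ is bounded on $[0,\infty)$. One then checks directly that
\[
\mathcal L(r)=-2r\sum_{p\le r}\log\Bigl(1-\tfrac1p\Bigr)+\sum_{r^{2/3}<p<r^{4/3}} f\!\left(\tfrac{r}{p}\right)+O\bigl(r^{2/3}\bigr),
\]
with the Mertens sum running all the way to $r$. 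The transitional sum now has a bounded integrand, so the PNT with error $\pi(t)-\mathrm{li}(t)\ll t/(\log t)^3$ and partial summation give $\sum_{r^{2/3}<p<r^{4/3}} f(r/p)=(r/\log r)\int_0^\infty f(u)u^{-2}\,du+O(r/(\log r)^2)$ without any delicate matching of boundary pieces; integration by parts then yields $\int_0^\infty f(u)u^{-2}\,du=\int_0^\infty f'(u)u^{-1}\,du-2=2C_{\max}-2$. Your three-way cancellation is thus replaced by a single algebraic subtraction in the definition of $f$.

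One small correction: $\sum_{p\le\sqrt r}\log(\alpha_p/3)$ is not a convergent constant. Each summand equals $-\log 3+O(1/p)$, so the sum is $-\log 3\cdot\pi(\sqrt r)+O(\log\log r)\asymp\sqrt r/\log r$. This is harmless since $\sqrt r/\log r=o(r/(\log r)^2)$, but it is absorbed in the error term rather than contributing a constant; in the paper's scheme this is simply the $O(1)$ in $\log E_p(r)=-2r\log(1-1/p)+O(1)$ for $p\le r^{2/3}$, summed to $O(r^{2/3})$.
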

We will prove Proposition \ref{L-est-prop} in Section \ref{section-L-est-prop}.

\begin{rem}
 The difference between the coefficients of $r\log \log r$ in equations \eqref{L-estimate} and \eqref{L-estimatetilde} reflects the distinction between the maximal values (achieved when 
 $\mathbb{X}(p)=1$, and when the term $\left(1-\frac{2}{p}+\frac{1}{p^2}\right)^{-r}$ dominates in $E_p(r)$) and the minimal values (achieved when $\re(\mathbb{X}(p))=-\frac{1}{2}$, and when the term $\left(1+\frac{1}{p}+\frac{1}{p^2}\right)^{-r}$ dominates in $E_p(-r)$). 
\end{rem}

\subsection{Proof of Proposition \ref{L-est-prop}}
\label{section-L-est-prop}

We first prove some  preliminary results. 

\begin{lem}\label{Ep-bound-lem}
Let $r\geq 4$ be a real number. Then we have
\begin{equation}\label{logEp-bound}
\log E_p(r)=\begin{cases}
-2r \log \left(1-\frac{1}{p} \right) +O(1) & \text{if}\; p <r^{\frac{2}{3}},\\
            \log \left( \frac{e^{2r/p}+2e^{-r/p}}{3}\right) +O\left(\frac{r}{p^2}\right) & \text{if}\; p>r^{\frac{2}{3}},
             \end{cases}
\end{equation}
and 
\begin{equation*}
\frac{E'_p(r)}{E_p(r)}=\begin{cases}
-2\log \left( 1-\frac{1}{p}\right) \left( 1+O\left(\exp(-3r^{\frac{1}{3}} \right)\right) & \text{ if } p\leq r^{\frac{2}{3}},\\
\frac{\frac{2}{p} \left(e^{2r/p}-e^{-r/p}\right)}{e^{2r/p}+2e^{-r/p}}\left( 1+O\left(\frac{1}{p}+\frac{r}{p^2} \right)\right) & \text{ if } p>r^{\frac{2}{3}}.
\end{cases}
\end{equation*}
We also have 
	\begin{equation*}
	\log E_p(-r)=\begin{cases}
	r \log \left(1+\frac{1}{p}+\frac{1}{p^2} \right) +O(1) & \text{ if } p <r^{\frac{2}{3}},\\
	\log \left( \frac{e^{-2r/p}+2e^{r/p}}{3}\right) +O\left(\frac{r}{p^2}\right) & \text{ if } p>r^{\frac{2}{3}},
	\end{cases}
	\end{equation*}
	and 
	\begin{equation*}
	\frac{E'_p(-r)}{E_p(-r)}=\begin{cases}
	\log \left( 1+\frac{1}{p}+\frac{1}{p^2}\right) \left( 1+O\left(\exp(-3r^{\frac{1}{3}} \right)\right) & \text{ if } p\leq r^{\frac{2}{3}},\\
	\frac{\frac{2}{p} \left(e^{r/p}-e^{-2r/p}\right)}{e^{-2r/p}+2e^{r/p}}\left( 1+O\left(\frac{1}{p}+\frac{r}{p^2} \right)\right) & \text{ if } p>r^{\frac{2}{3}}.
	\end{cases}
	\end{equation*}

\end{lem}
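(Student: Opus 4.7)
The plan is to reduce in each case to isolating the dominant term in the three-term expression
\[ E_p(\pm r) = (1-\alpha_p) + \frac{\alpha_p}{3}\left(1-\tfrac{1}{p}\right)^{\mp 2r} + \frac{2\alpha_p}{3}\left(1+\tfrac{1}{p}+\tfrac{1}{p^2}\right)^{\mp r}, \]
and then estimate the remaining contributions. For $\log E_p(r)$ with $p \leq r^{2/3}$ I would show that the middle term dominates: since $(1-1/p)^{-2r} \geq e^{2r/p} \geq e^{2r^{1/3}}$, while the third term is bounded above by $e^{-r/(p+1)} \leq 1$ and $1-\alpha_p = O(1/p)$, the ratio of the non-dominant terms to the dominant one is $O(e^{-3r^{1/3}})$. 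Taking logarithms then yields $\log E_p(r) = -2r\log(1-1/p) + \log(\alpha_p/3) + O(e^{-3r^{1/3}})$, and $\log(\alpha_p/3) = O(1)$. For the derivative, the two summands of
\[ E_p'(r) = -\tfrac{2\alpha_p}{3}\log(1-1/p)(1-1/p)^{-2r} - \tfrac{2\alpha_p}{3}\log(1+1/p+1/p^2)(1+1/p+1/p^2)^{-r} \]
are likewise dominated by the first with the same exponentially small relative error, giving $E_p'(r)/E_p(r) = -2\log(1-1/p)(1+O(e^{-3r^{1/3}}))$.

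For $p > r^{2/3}$, the quantity $r/p^2 < r^{-1/3}$ is small, and I would Taylor-expand $\log(1-1/p) = -1/p - 1/(2p^2) + O(1/p^3)$ and $\log(1+1/p+1/p^2) = 1/p + 1/(2p^2) + O(1/p^3)$ to obtain $(1-1/p)^{-2r} = e^{2r/p}(1+O(r/p^2))$ and $(1+1/p+1/p^2)^{-r} = e^{-r/p}(1+O(r/p^2))$. Using $\alpha_p = 1+O(1/p)$ and the elementary fact that $x\mapsto(e^{2x}+2e^{-x})/3$ is minimized at $x=0$ with value $1$, one can absorb the $1-\alpha_p$ term to get $E_p(r) = \frac{e^{2r/p}+2e^{-r/p}}{3}(1+O(1/p+r/p^2))$. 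The stated additive error $O(r/p^2)$ in the log comes from separately checking that the $O(1/p)$ contribution is already subsumed: for $p \leq r$ this holds because $1/p \leq r/p^2$; for $p > r$ one verifies directly that $E_p(r) = 1 + O(r/p^2)$ and that $\log\frac{e^{2r/p}+2e^{-r/p}}{3} = O(r^2/p^2) = O(r/p^2)$, so both quantities and hence their difference are $O(r/p^2)$. Dividing the Taylor-expanded derivative
\[ E_p'(r) = \tfrac{2\alpha_p}{3p}(e^{2r/p}-e^{-r/p})(1+O(1/p+r/p^2)) \]
by this $E_p(r)$ produces the stated formula for the log-derivative.

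The $-r$ estimates are entirely parallel, with the roles swapped: for $p \leq r^{2/3}$ the third term $\frac{2\alpha_p}{3}(1+1/p+1/p^2)^r$ now dominates since $(1+1/p+1/p^2)^r \geq e^{r/(p+1)} \geq e^{r^{1/3}/2}$, and the same comparison argument gives both claims. For $p > r^{2/3}$ the Taylor expansions are identical apart from sign reversals in the exponents, and the notation $E_p'(-r)$—which in the proof of Proposition \ref{L-est-prop} stands for the $r$-derivative of $E_p(-r)$—produces a dominant summand $\log(1+1/p+1/p^2)\cdot \frac{2\alpha_p}{3}(1+1/p+1/p^2)^r$ yielding the stated asymptotic. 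The main technical difficulty throughout is the bookkeeping in the large-prime regime, specifically the absorption of the $1-\alpha_p$ contribution (of size $O(1/p)$) into the finer error $O(r/p^2)$; this requires splitting into the subcases $p \leq r$ and $p > r$ as described above, but otherwise the arguments are routine Taylor expansions.
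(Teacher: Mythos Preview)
Your approach is essentially the same as the paper's: factor out the dominant summand of $E_p(\pm r)$ in the small-prime regime and Taylor-expand in the large-prime regime. You are in fact more explicit than the paper about one point it glosses over, namely why the $1-\alpha_p = O(1/p)$ contribution can be absorbed into the claimed $O(r/p^2)$ error when $p > r^{2/3}$; the paper simply writes $\sim \tfrac13(e^{2r/p}+2e^{-r/p})(1+O(r/p^2))$ without justification.

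However, your handling of the subcase $p>r$ contains a slip. You assert that $E_p(r) = 1 + O(r/p^2)$ and $\log\tfrac{e^{2r/p}+2e^{-r/p}}{3} = O(r^2/p^2) = O(r/p^2)$, and conclude that their difference is $O(r/p^2)$. But for $p>r$ one has $S-1 := \tfrac13(e^{2r/p}+2e^{-r/p}) - 1 = r^2/p^2 + O(r^3/p^3)$, so both quantities are genuinely of size $r^2/p^2$, and $r^2/p^2$ is \emph{not} $O(r/p^2)$ uniformly in $r$. The conclusion is nonetheless correct, via the multiplicative route: writing $E_p(r) = S - (1-\alpha_p)(S-1) + O(S\cdot r/p^2)$, the extra term satisfies
\[
(1-\alpha_p)\,\frac{S-1}{S} \;=\; O\!\left(\frac{1}{p}\right)\cdot O\!\left(\frac{r^2}{p^2}\right) \;=\; O\!\left(\frac{r^2}{p^3}\right) \;=\; O\!\left(\frac{r}{p^2}\right)
\]
since $r/p<1$, which gives $E_p(r) = S(1+O(r/p^2))$ and hence the stated additive error in $\log E_p(r)$. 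With this correction your argument goes through and matches the paper's.
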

 \begin{proof}
We begin with \eqref{logEp-bound}.  For $p \leq r^{\frac{2}{3}}$, we have
 \begin{align} \nonumber
 E_p(r) &=     \frac{\alpha_p}{3} \left( 1-\frac{2}{p} + \frac{1}{p^2} \right)^{-r}  \left( 1 + \frac{1 - \alpha_p + \frac{2 \alpha_p}{3} \left(1+\frac{1}{p} + \frac{1}{p^2} \right)^{-r}}
{\frac{\alpha_p}{3} \left( 1-\frac{2}{p} + \frac{1}{p^2}\right)^{-r}} \right) \\ \label{exp-3}
&=     \frac{\alpha_p}{3} \left( 1-\frac{2}{p} + \frac{1}{p^2} \right)^{-r}  \left( 1 + O \left(  \exp{(- 3r^{\frac{1}{3}})} \right) \right),
\end{align}
since
\[\frac{1 - \alpha_p + \frac{2 \alpha_p}{3} \left(1+\frac{1}{p} + \frac{1}{p^2} \right)^{-r}}
{\frac{\alpha_p}{3} \left( 1-\frac{2}{p} + \frac{1}{p^2}\right)^{-r}} 
\sim 2  \left(1+\frac{1}{p} \right)^{-r} \left(1+\frac{1}{p} \right)^{-2r} \sim 2 \exp (-3r/p) .
\]
This proves \eqref{logEp-bound} when $p \leq r^{\frac{2}{3}}$.
For $p > r^{\frac{2}{3}}$, we have
\begin{align*}
E_p(r) &=  1-\alpha_p + \frac{\alpha_p}{3} \left( 1-\frac{2}{p} + \frac{1}{p^2}\right)^{-r}  + \frac{2 \alpha_p}{3} \left(1+\frac{1}{p} + \frac{1}{p^2} \right)^{-r}\\
&=1-\alpha_p + \frac{\alpha_p}{3} \exp\left(\frac{2r}{p}+O\left(\frac{r}{p^2}\right) \right) + \frac{2 \alpha_p}{3} \exp\left(-\frac{r}{p}+O\left(\frac{r}{p^2}\right) \right) \\
&=1+\frac{\alpha_p}{3} \left(e^{2r/p}+2e^{-r/p}-3\right) \exp\left(O\left(\frac{r}{p^2}\right)\right) \\
&\sim \frac{1}{3}\left(e^{2r/p}+2e^{-r/p}\right) \left(1+O\left(\frac{r}{p^2}\right)\right)
\end{align*}
and \eqref{logEp-bound} follows for this case too.

Now we have 
\begin{align*}
 E_p'(r) &=-\frac{\alpha_p}{3} \left( 1-\frac{2}{p} + \frac{1}{p^2}\right)^{-r} \log \left( 1-\frac{2}{p} + \frac{1}{p^2}\right)- \frac{2 \alpha_p}{3} \left(1+\frac{1}{p} + \frac{1}{p^2} \right)^{-r}\log  \left(1+\frac{1}{p} + \frac{1}{p^2} \right).
\end{align*}

For $p\leq r^{\frac{2}{3}}$,  we use \eqref{exp-3} and get
\begin{align*}
 \frac{E'_p(r)}{E_p(r)}=& -\log \left( 1-\frac{2}{p} + \frac{1}{p^2}\right) \left( 1+O\left(\exp(- 3r^{\frac{1}{3}} \right)\right)\\
 =&-2\log \left( 1-\frac{1}{p}\right) \left( 1+O\left(\exp(-3r^{\frac{1}{3}} \right)\right).
\end{align*}
For $p>r^{\frac{2}{3}}$, we get
\begin{align*}
 E_p'(r) &=-\frac{\alpha_p}{3} e^{2r/p}\left(1+O\left(\frac{r}{p^2}\right) \right) 
 \left( -\frac{2}{p}+O\left(\frac{1}{p^2} \right)\right)- \frac{2 \alpha_p}{3} e^{-r/p}\left(1+O\left(\frac{r}{p^2}\right) \right) \left( \frac{1}{p}+O\left(\frac{1}{p^2} \right)\right)\\
 &=\frac{2\alpha_p}{3p} \left(e^{2r/p}-e^{-r/p}\right)\left( 1+O\left(\frac{1}{p}+\frac{r}{p^2} \right)\right) \\
&=\frac{2}{3p} \left(e^{2r/p}-e^{-r/p}\right)\left( 1+O\left(\frac{1}{p}+\frac{r}{p^2} \right)\right).
\end{align*}
This gives
\begin{align*}
 \frac{E'_p(r)}{E_p(r)}=& \frac{\frac{2\alpha_p}{3p} \left(e^{2r/p}-e^{-r/p}\right)}{1+\frac{\alpha_p}{3} \left(e^{2r/p}+2e^{-r/p}-3\right)}\left( 1+O\left(\frac{1}{p}+\frac{r}{p^2} \right)\right)\\
=& \frac{\frac{2}{p} \left(e^{2r/p}-e^{-r/p}\right)}{\left(e^{2r/p}+2e^{-r/p}\right)}\left( 1+O\left(\frac{1}{p}+\frac{r}{p^2} \right)\right).
\end{align*}

We now proceed to consider the $E_p(-r)$ case. 
	For $p \leq r^{\frac{2}{3}}$, we have
	\begin{align*} \nonumber
	E_p(-r) &=     \frac{2\alpha_p}{3} \left( 1+\frac{1}{p} + \frac{1}{p^2} \right)^{r}  \left( 1 + \frac{1 - \alpha_p + \frac{ \alpha_p}{3} \left(1-\frac{2}{p} + \frac{1}{p^2} \right)^{r}}
	{\frac{2\alpha_p}{3} \left( 1+\frac{1}{p} + \frac{1}{p^2}\right)^{r}} \right) \\ 
	&=     \frac{2\alpha_p}{3} \left( 1+\frac{1}{p} + \frac{1}{p^2} \right)^{r}  \left( 1 + O \left(  \exp{(- 3r^{\frac{1}{3}})} \right) \right),
	\end{align*}
	since
	\[\frac{1 - \alpha_p + \frac{\alpha_p}{3} \left(1-\frac{2}{p} + \frac{1}{p^2} \right)^{r}}
	{\frac{2\alpha_p}{3} \left( 1+\frac{1}{p} + \frac{1}{p^2}\right)^{r}} 
	\sim \frac12  \left(1+\frac{1}{p} \right)^{-2r} \left(1+\frac{1}{p} \right)^{-r} \sim \frac12 \exp (-3r/p) .
	\]
	The rest of the proof proceeds similarly as in the case $E_p(r)$. 
	\end{proof}

Define 
\begin{align}
f(t)=\begin{cases}\label{f(t)}
             \log \left( \frac{e^{2t}+2e^{-t}}{3}\right) & \text{ if } 0\leq t <1,\\ 
             \log \left( \frac{e^{2t}+2e^{-t}}{3}\right) -2t & \text{ if } 1\leq t.\\ 
               \end{cases}
               \end{align}
               and 
              \begin{align}\label{f(t) hat}
              \widetilde{f}(t)=\begin{cases}
              \log \left( \frac{e^{-2t}+2e^{t}}{3}\right) & \text{ if } 0\leq t <1,\\ 
              \log \left( \frac{e^{-2t}+2e^{t}}{3}\right) -t & \text{ if } 1\leq t.\\ 
              \end{cases}
              \end{align} 
\begin{lem}\label{f-bound-lem}
 The functions $f(t)$ and $\widetilde{f}(t)$ are bounded on $[0,\infty)$, $f(t)=t^2+t^3/3+O(t^4)$ and $\widetilde{f}(t)=t^2-t^3/3+O(t^4)$ if $0\leq t<1$. Moreover, 
\begin{equation*}
\widetilde{f}'(t)=f'(t)=\begin{cases}
2t+O(t^2) &  \text{ if } 0\leq t <1,\\
O(e^{-3t}) & \text{ if } t\geq 1. 
\end{cases}
\end{equation*}

\end{lem}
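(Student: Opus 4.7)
The plan is to verify all claims by direct computation, handling the Taylor expansion near $t=0$ and the asymptotic behavior as $t\to\infty$ separately, since the function is defined piecewise.

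First I will establish the expansions near $0$ by expanding the numerator of the logarithmic argument. Using $e^{2t}=1+2t+2t^2+\tfrac{4t^3}{3}+O(t^4)$ and $e^{-t}=1-t+\tfrac{t^2}{2}-\tfrac{t^3}{6}+O(t^4)$, the linear terms cancel and I obtain
$$\frac{e^{2t}+2e^{-t}}{3}=1+t^2+\frac{t^3}{3}+O(t^4).$$
Applying $\log(1+u)=u-\frac{u^2}{2}+O(u^3)$ with $u=t^2+t^3/3+O(t^4)$ gives $f(t)=t^2+t^3/3+O(t^4)$ on $[0,1)$. The analogous calculation with $e^{-2t}$ and $e^t$ reversed produces $\frac{e^{-2t}+2e^t}{3}=1+t^2-\tfrac{t^3}{3}+O(t^4)$, yielding $\widetilde{f}(t)=t^2-t^3/3+O(t^4)$.

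Next I will handle the range $t\geq 1$ and the boundedness claim. For $f$, factoring out $e^{2t}$ gives $\log\!\left(\tfrac{e^{2t}+2e^{-t}}{3}\right)=2t-\log 3+\log(1+2e^{-3t})$, so the subtraction of $2t$ leaves $f(t)=-\log 3+\log(1+2e^{-3t})$, which is bounded (in fact, $O(1)$) on $[1,\infty)$. Similarly, factoring $2e^t$ from the defining expression for $\widetilde{f}$ yields $\widetilde{f}(t)=\log(2/3)+\log(1+\tfrac{1}{2}e^{-3t})$ for $t\geq 1$, which is bounded. On the compact intervals $[0,1]$ both functions are continuous and hence bounded, so boundedness on $[0,\infty)$ follows.

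For the derivatives, I compute directly: on $[0,1)$, $f'(t)=\frac{2e^{2t}-2e^{-t}}{e^{2t}+2e^{-t}}$ and expanding numerator and denominator gives $f'(t)=\frac{6t+O(t^2)}{3+O(t^2)}=2t+O(t^2)$; the same computation yields the identical expansion for $\widetilde{f}'(t)$, reflecting that $f$ and $\widetilde{f}$ agree to order $t^2$. On $[1,\infty)$, the subtractions simplify the derivatives to the clean forms
$$f'(t)=\frac{-6}{e^{3t}+2},\qquad \widetilde{f}'(t)=\frac{-3}{1+2e^{3t}},$$
both of which are plainly $O(e^{-3t})$.

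No part of this is delicate; the only minor care needed is to verify that the two-piece definitions really produce bounded functions (a notational jump occurs at $t=1$ but does not disrupt boundedness), and to confirm that the leading cancellation in both expansions about zero produces the same $t^2$ coefficient and opposite-signed $t^3$ coefficients, which is the essential symmetry exploited later when comparing $\mathcal{L}(r)$ to $\widetilde{\mathcal{L}}(r)$.
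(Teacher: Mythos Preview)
Your proof is correct and follows essentially the same approach as the paper: direct Taylor expansion near $0$ and explicit computation of the derivatives, with the same formulas $f'(t)=\frac{-6}{e^{3t}+2}$ and $\widetilde{f}'(t)=\frac{-3}{1+2e^{3t}}$ for $t\geq 1$. The only cosmetic difference is that the paper argues boundedness via the sandwich $\frac{e^{2t}}{3}\leq \frac{e^{2t}+2e^{-t}}{3}\leq e^{2t}$ rather than factoring out the dominant exponential, but these are equivalent.
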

\begin{proof}
We see that $f(t)$ is bounded on $[0,\infty)$ since $\frac{e^{2t}}{3}\leq  \frac{e^{2t}+2e^{-t}}{3} \leq e^{2t}$, and the Taylor expansion gives  $f(t)=t^2+t^3/3+O(t^4)$ if $0\leq t <1$.  
For the derivative,  if $0\leq t <1$, then
 $f'(t)=\frac{2e^{2t}-2e^{-t}}{e^{2t}+2e^{-t}}=2t+O(t^2)$, 
and if $t \geq 1$, then
 $f'(t)=\frac{-6e^{-t}}{e^{2t}+2e^{-t}}=O(e^{-3t})$.  The proof is similar for $\widetilde{f}(t)$.
\end{proof}
 
\begin{proof}[Proof of Proposition \ref{L-est-prop}] 
We will first prove \eqref{L-estimate} and \eqref{L-estimatetilde}, beginning with \eqref{L-estimate}. 

Lemmas \ref{Ep-bound-lem} and \ref{f-bound-lem} give, as in \cite[Eq.~(4.13)]{DL},
\begin{align} \nonumber \mathcal{L}(r)=&-2r \sum_{p\leq r^{\frac{2}{3}}}  \log \left(1-\frac{1}{p} \right) +\sum_{p> r^{\frac{2}{3}}}\log \left( \frac{e^{2r/p}+2e^{-r/p}}{3}\right)  + O\left(r^{\frac{2}{3}} \right)\\
\label{compute-Lr}
=&-2r \sum_{p\leq r}  \log \left(1-\frac{1}{p}\right) +\sum_{r^{\frac{2}{3}}<p<r^{\frac{4}{3}}} f\left(\frac{r}{p}\right)  + O\left(r^{\frac{2}{3}} \right).
 \end{align}
Using the Prime Number Theorem $\pi(t)-\mathrm{li}(t)\ll \frac{t}{(\log t)^3}$, together with partial summation, we obtain 
\begin{align*}
 \sum_{r^{\frac{2}{3}}<p<r^{\frac{4}{3}}}f\left(\frac{r}{p}\right)   =&\int_{r^{\frac{2}{3}}}^{r^{\frac{4}{3}}} f\left(\frac{r}{t}\right) \frac{dt}{\log t} + O\left( \frac{r}{(\log r)^2}\right)\\
 =& \frac{r}{\log r} \int_{r^{-1/3}}^{r^{\frac{1}{3}}} \frac{f(u)}{u^2} du +  O\left( \frac{r}{(\log r)^2}\right).
\end{align*}
We have 
\[ \int_{r^{-1/3}}^{r^{\frac{1}{3}}} \frac{f(u)}{u^2} du = \int_{0}^{\infty} \frac{f(u)}{u^2} du +O \left( r^{-1/3}\right),\]
and
\begin{align*}
\int_{0}^{\infty} \frac{f(u)}{u^2} du& =\int_{0}^{\infty} \frac{f'(u)}{u} du-\left(\lim_{x\to1^-}\frac{f(x)}{x}-\lim_{x\to0^+}\frac{f(x)}{x}+\lim_{x\to\infty}\frac{f(x)}{x}-\lim_{x\to1^+}\frac{f(x)}{x}\right)  \\
& =\int_{0}^{\infty} \frac{f'(u)}{u} du  - 2 \\
&= \int_0^1 \frac{2e^{2t}-2e^{-t}}{t(e^{2t}+2e^{-t})}dt+\int_1^\infty \frac{-6}{t(e^{3t}+2)}dt - 2 = 2C_{\max} - 2.
\end{align*}
 Then, collecting the above estimates and replacing in \eqref{compute-Lr}  gives
\begin{align*}\mathcal{L}(r)
=&-2r \sum_{p\leq r}  \log \left(1-\frac{1}{p}\right) + \frac{r}{\log r}\int_{0}^{\infty} \frac{f'(u)}{u} du- 2\frac{r}{\log r}+
 O\left( \frac{r}{(\log r)^2}\right) \\
 =& 2r \log \log r +2r \gamma + \frac{2r(C_{\max}-1)}{\log r}+
 O\left( \frac{r}{(\log r)^2}\right),
 \end{align*}
 where we have used the explicit form of Mertens' third theorem 
 $$
 \prod_{p \leq r} \left( 1 - \frac{1}{p} \right) = \frac{e^{-\gamma}}{\log{r}} + O \left( \frac{1}{\log^3{r}} \right)
 $$
due to Rosser and Schoenfeld \cite{Rosser-Schoenfeld}.

 We now proceed to \eqref{L-estimatetilde}, the proof is very similar to \eqref{L-estimate} but we include the details as we arrive at a different constant.

   Lemmas \ref{Ep-bound-lem} and \ref{f-bound-lem} give, as in the case $\mathcal{L}(r)$,
\begin{align*}\widetilde{\mathcal{L}}(r)=&r \sum_{p\leq r^{\frac{2}{3}}}  \log \left(1+\frac{1}{p} +\frac{1}{p^2}\right) +\sum_{p> r^{\frac{2}{3}}}\log \left( \frac{e^{-2r/p}+2e^{r/p}}{3}\right)  + O\left(r^{\frac{2}{3}} \right)\\
=&r \sum_{p\leq r}  \log \left(1+\frac{1}{p} +\frac{1}{p^2}\right) +\sum_{r^{\frac{2}{3}}<p<r^{\frac{4}{3}}}\widetilde{f}\left(\frac{r}{p}\right)  + O\left(r^{\frac{2}{3}} \right),
 \end{align*}
and 
\begin{align*}
 \sum_{r^{\frac{2}{3}}<p<r^{\frac{4}{3}}}\widetilde{f}\left(\frac{r}{p}\right)  
 &= \frac{r}{\log r} \int_{r^{-1/3}}^{r^{\frac{1}{3}}} \frac{\widetilde{f}(u)}{u^2} du +  O\left( \frac{r}{(\log r)^2}\right).
\end{align*}
We have 
\[ \int_{r^{-1/3}}^{r^{\frac{1}{3}}} \frac{\widetilde{f}(u)}{u^2} du = \int_{0}^{\infty} \frac{\widetilde{f}(u)}{u^2} du +O \left( r^{-1/3}\right),\]
and 
\begin{align*}
\int_{0}^{\infty} \frac{\widetilde{f}(u)}{u^2} du &=\int_{0}^{\infty} \frac{\widetilde{f}'(u)}{u} du-\left(\lim_{x\to1^-}\frac{\widetilde{f}(x)}{x}-\lim_{x\to0^+}\frac{\widetilde{f}(x)}{x}+\lim_{x\to\infty}\frac{\widetilde{f}(x)}{x}-\lim_{x\to1^+}\frac{\widetilde{f}(x)}{x}\right) \\
 &=
 \int_{0}^1 \frac{2e^{t}-2e^{-2t}}{t(e^{-2t}+2e^{t})}dt +\int_1^{\infty} \frac{-3e^{-2t}}{t(e^{-2t}+2e^{t})}dt-1 = C_{\min} - 1.
\end{align*}
 Then, collecting the above estimates gives
\begin{align*}\widetilde{\mathcal{L}}(r)
=&r \sum_{p\leq r} \left( \log \left(1-\frac{1}{p^3}\right)- \log \left(1-\frac{1}{p}\right)\right)+ \frac{r}{\log r}\int_{0}^{\infty} \frac{\widetilde{f}'(u)}{u} du-\frac{r}{\log r}+
 O\left( \frac{r}{(\log r)^2}\right) \\
 =& r \log \log r +r \gamma -r\log\zeta(3)+ \frac{r(C_{\min}-1)}{\log r}+
 O\left( \frac{r}{(\log r)^2}\right),
 \end{align*}
where we have applied Mertens' third theorem as before.

 We will only prove the first equation in \eqref{Lprime-estimate} as the proofs are identical for both. We have
 \begin{align*}
  \mathcal{L}'(r)=& -2\sum_{p\leq r^{\frac{2}{3}}} \log \left( 1-\frac{1}{p}\right) +\sum_{r^{\frac{2}{3}}<p}  \frac{2 \left(e^{2r/p}-e^{-r/p}\right)}{p \left(e^{2r/p}+2e^{-r/p}\right)}+O\left(r^{-1/3}\right)\\
  =& -2\sum_{p\leq r} \log \left( 1-\frac{1}{p}\right) +\sum_{r^{\frac{2}{3}}<p<r^{\frac{4}{3}}}  \frac{1}{p }f'\left(\frac{r}{p}\right)+O\left(r^{-1/3}\right),
 \end{align*}
and
\[\sum_{r^{\frac{2}{3}}<p<r^{\frac{4}{3}}}  \frac{1}{p }f'\left(\frac{r}{p}\right)=\frac{1}{\log r} \int_0^\infty \frac{f'(u)}{u} du +O\left(\frac{1}{(\log r)^2} \right),\]
 which gives
 \begin{align*}
  \mathcal{L}'(r)
 =& -2\sum_{p\leq r} \log \left( 1-\frac{1}{p}\right) +\frac{1}{\log r} \int_0^\infty \frac{f'(u)}{u} du +O\left(\frac{1}{(\log r)^2} \right)\\
  =& 2 \log \log r +2\gamma+\frac{2C_{\max}}{\log r} +O\left(\frac{1}{(\log r)^2} \right).
 \end{align*}
The approach for the results in \eqref{Lhigherderivs-estimate} is the same, although the calculations are more technical. 
\end{proof}

The following result corresponds to  Lemma 4.5 from \cite{DL}.
 \begin{lem} \label{DL-lemma4.5} 
  Let $r$ be large. If $p\gg r$, then for some positive constant $b_1$ we have 
  \[\frac{|E_p(r+it)|}{E_p(r)},\frac{|E_p(-r+it)|}{E_p(-r)}\leq \exp \left( -b_1\left( 1- \cos\left(t \log \left(\frac{p^2+p+1}{p^2-2p+1}\right)\right)\right) \right).\]
 \end{lem}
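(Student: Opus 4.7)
\medskip

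\noindent\emph{Proof proposal.} The plan is to write $E_p(r+it)$ as a sum of three complex exponentials with moduli determined by $E_p(r)$, and then compute $E_p(r)^2 - |E_p(r+it)|^2$ directly, so that the relevant $1-\cos(\theta-\phi)$ factor drops out of the algebra. Set $u = 1 - \tfrac{2}{p} + \tfrac{1}{p^2} = \tfrac{p^2-2p+1}{p^2}$ and $v = 1 + \tfrac{1}{p} + \tfrac{1}{p^2} = \tfrac{p^2+p+1}{p^2}$. Then
\[
E_p(r+it) = a + b\,e^{i\theta} + c\,e^{i\phi},
\]
where $a = 1-\alpha_p$, $b = \tfrac{\alpha_p}{3}u^{-r}$, $c = \tfrac{2\alpha_p}{3}v^{-r}$, $\theta = -t\log u$, $\phi = -t\log v$. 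The key observation is that
\[
\theta - \phi \;=\; t\log(v/u) \;=\; t\log\!\left(\frac{p^2+p+1}{p^2-2p+1}\right),
\]
which is precisely the argument appearing inside the cosine in the statement. Expanding $|E_p(r+it)|^2 = (a + b\cos\theta + c\cos\phi)^2 + (b\sin\theta + c\sin\phi)^2$ and comparing with $E_p(r)^2 = (a+b+c)^2$ yields the identity
\[
E_p(r)^2 - |E_p(r+it)|^2 \;=\; 2ab\bigl(1-\cos\theta\bigr) + 2ac\bigl(1-\cos\phi\bigr) + 2bc\bigl(1-\cos(\theta-\phi)\bigr).
\]

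Next, I would discard the first two non-negative terms and argue that, in the regime $p \gg r$, the ratio $r/p$ is bounded by an absolute constant, so that $u^{-r} = \exp(2r/p + O(r/p^2))$ and $v^{-r} = \exp(-r/p + O(r/p^2))$ are each pinched between positive absolute constants. Combined with $\alpha_p$ bounded away from $0$ (indeed $\alpha_p \geq \tfrac{p}{p+2}$), this gives $b,c \asymp 1$ and $E_p(r) \asymp 1$, so $bc \geq c_0$ and $E_p(r)^2 \leq C_1$ for absolute constants $c_0, C_1 > 0$. Consequently,
\[
\frac{|E_p(r+it)|^2}{E_p(r)^2} \;\leq\; 1 - \frac{2c_0}{C_1}\bigl(1-\cos(\theta-\phi)\bigr) \;\leq\; \exp\!\left(-\tfrac{2c_0}{C_1}\bigl(1-\cos(\theta-\phi)\bigr)\right),
\]
using $1-x \leq e^{-x}$ (valid for all real $x$). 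Taking square roots establishes the first bound with $b_1 = c_0/C_1$.

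For the inequality involving $E_p(-r+it)$, the only modification is to replace $b$ and $c$ by $b' = \tfrac{\alpha_p}{3}u^{r}$ and $c' = \tfrac{2\alpha_p}{3}v^{r}$; the angles $\theta,\phi$ are unchanged (because we are conjugating the exponent, not its imaginary part up to sign), and the same algebraic identity continues to hold. Since $r/p$ is still bounded, $u^r$ and $v^r$ remain $\asymp 1$, so $b'c' \geq c_0'$ for some absolute $c_0'>0$, and the argument goes through verbatim, yielding the same form of bound (possibly with a smaller absolute constant $b_1$).

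The only delicate point is arranging the constants so that $b_1$ is independent of $p$ and $r$ in the range $p \gg r$; this requires nothing beyond the elementary size estimates $u^{\pm r}, v^{\pm r} = \Theta(1)$, $\alpha_p = \Theta(1)$ noted above. There is no genuine obstacle — the whole content of the lemma is the observation that the triangle inequality for the three-term sum $a + be^{i\theta} + ce^{i\phi}$ loses, at the dominant order, an amount proportional to the pairwise argument difference between the two \emph{large} summands $be^{i\theta}$ and $ce^{i\phi}$, which is exactly $\theta-\phi$.
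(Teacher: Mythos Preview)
Your proof is correct and follows essentially the same approach as the paper. The only cosmetic difference is that the paper quotes the three-term inequality $|x_1+x_2e^{i\theta_2}+x_3e^{i\theta_3}|\le (x_1+x_2+x_3)\exp\bigl(-x_1x_3(1-\cos\theta_3)/(x_1+x_2+x_3)^2\bigr)$ from \cite[Lemma~3.2]{GS} as a black box (after factoring out the phase of the $u^{-r}$ term so that $\theta_3=t\log(u/v)$), whereas you derive the equivalent bound directly via the identity $E_p(r)^2-|E_p(r+it)|^2=2ab(1-\cos\theta)+2ac(1-\cos\phi)+2bc(1-\cos(\theta-\phi))$ and $1-x\le e^{-x}$; the size estimates $b,c,E_p(r)\asymp 1$ for $p\gg r$ that produce the absolute constant $b_1$ are identical in both arguments.
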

\begin{proof}
 Let $x_1,x_2,x_3$ be positive real numbers and $\theta_2,\theta_3$ be real numbers. The following inequality is established in the proof of \cite[Lemma 3.2]{GS}:
 \[|x_1+x_2e^{i\theta_2}+x_3e^{i\theta_3}|\leq (x_1+x_2+x_3) \exp \left(-\frac{x_1x_3(1-\cos \theta_3)}{(x_1+x_2+x_3)^2}\right).\]
 We apply the above inequality to $x_1=\frac{\alpha_p}{3}\left(1-\frac{2}{p}+\frac{1}{p^2} \right)^{-r}$, $x_2=1-\alpha_p$, $x_3=\frac{2\alpha_p}{3}\left(1+\frac{1}{p}+\frac{1}{p^2} \right)^{-r}$ and $\theta_2=t\log \left(1-\frac{2}{p}+\frac{1}{p^2} \right)$, and $\theta_3=t\log \left(\frac{p^2-2p+1}{p^2+p+1}\right)$. We conclude since $\cos(\log(x))=\cos(\log(1/x))$. The proof for $-r$ is completely analogous. 
\end{proof}

\begin{lem} \label{DL-lemma4.6} Let $r$ be large. Then, there exists a constant $b_2 >0$ such that
\begin{align*}
\frac{\left\vert \E \left( |L(1, \X)|^{r + it} \right) \right\vert}{ \E \left( |L(1, \X)|^{r} \right)},\frac{\left\vert \E \left( |L(1, \X)|^{-r + it} \right) \right\vert}{ \E \left( |L(1, \X)|^{-r} \right)} \ll \begin{cases}
\exp \left( - b_2 \frac{t^2}{r \log r} \right) & \text{if $|t| \leq r/4$,} \\ \exp \left( - b_2 \frac{|t|}{ \log |t|} \right) & \text{if $|t| > r/4$.} \end{cases}
\end{align*}
\end{lem}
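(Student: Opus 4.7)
The plan is to pass to Euler products and invoke the prime-by-prime bound of Lemma \ref{DL-lemma4.5}. By independence of the $\X(p)$, I have
\[
\frac{|\E(|L(1,\X)|^{r+it})|}{\E(|L(1,\X)|^{r})} = \prod_p \frac{|E_p((r+it)/2)|}{E_p(r/2)}.
\]
For primes $p \leq c_0 r$ (with $c_0$ a small constant chosen below) the trivial bound $|E_p((r+it)/2)| \leq E_p(r/2)$ applies, being an immediate consequence of the triangle inequality for the defining expectation $E_p(z) = \E(|1-\X(p)/p|^{-2z})$. For $p > c_0 r$, Lemma \ref{DL-lemma4.5} (applied with $r/2$ in place of $r$) yields
\[
\frac{|E_p((r+it)/2)|}{E_p(r/2)} \leq \exp\bigl(-b_1(1-\cos\theta_p)\bigr), \qquad \theta_p := \tfrac{t}{2}\log\tfrac{p^2+p+1}{p^2-2p+1} = \tfrac{3t}{2p} + O(|t|/p^2).
\]
Taking the product over $p>c_0 r$ reduces the task to a lower bound for $S(t,r) := \sum_{p>c_0 r}(1-\cos\theta_p)$ of the orders stated in the two regimes.

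In the regime $|t|\leq r/4$, choosing $c_0$ sufficiently large makes $|\theta_p|\leq 1/2$ for all $p > c_0 r$, so the Taylor bound $1-\cos\theta_p \gg \theta_p^2 \asymp t^2/p^2$ is available. Partial summation with the Prime Number Theorem then gives
\[
S(t,r) \gg t^2\sum_{p>c_0 r} p^{-2} \asymp \frac{t^2}{r\log r},
\]
which produces the required exponential savings $\exp\bigl(-b_2 t^2/(r\log r)\bigr)$.

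In the regime $|t|>r/4$ I split at $p=A|t|$ for a large constant $A$. On $p>A|t|$ the Taylor expansion argument still applies and contributes $\gg t^2/(|t|\log|t|) = |t|/\log|t|$. On $c_0 r < p \leq A|t|$ the phases $\theta_p$ are no longer small, and the mass is extracted by restricting to those primes for which $\theta_p \bmod 2\pi$ lies in $[\pi/4,3\pi/4]$, where $1-\cos\theta_p\geq 1/2$. For each integer $k\geq 1$ in a suitable range, the condition $\theta_p \in [2\pi k+\pi/4,\,2\pi k+3\pi/4]$ selects primes in an interval of length $\asymp |t|/k^2$ centered near $p\asymp 3|t|/(4\pi k)$; a short-interval version of PNT yields $\asymp |t|/(k^2\log|t|)$ primes per interval, and summing over $k$ gives $\sum_k |t|/(k^2\log|t|) \asymp |t|/\log|t|$. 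This equidistribution step is the main technical obstacle: one needs PNT to be valid on the short intervals, which restricts $k$ to a small power of $|t|$, and the tail in $k$ must be shown to be negligible (only the constantly many smallest $k$ carry the bulk of the mass, so this tail estimate is not sharp). Finally, the bound for $|\E(|L(1,\X)|^{-r+it})|/\E(|L(1,\X)|^{-r})$ follows from an identical argument, since Lemma \ref{DL-lemma4.5} provides the analogous prime-by-prime bound with the same phases $\theta_p$.
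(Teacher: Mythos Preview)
Your approach is correct and is precisely the one the paper has in mind: it simply says the lemma follows from Lemma~\ref{DL-lemma4.5} ``with exactly the same proof as in \cite[Lemma 4.6]{DL}'', and the argument there is to pass to the Euler product, use the trivial bound $|E_p(\cdot)|\le E_p(\re(\cdot))$ for small primes, and Lemma~\ref{DL-lemma4.5} for $p\gg r$, exactly as you do.

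One simplification: your treatment of the range $c_0 r < p \le A|t|$ in the regime $|t|>r/4$ is unnecessary. You already observe that the tail $p>A|t|$ contributes $\gg t^2/(|t|\log|t|)=|t|/\log|t|$ via the Taylor bound, and this is the full target. Since every factor in the product is at most $1$, you may simply discard the range $c_0 r < p \le A|t|$ (use the trivial bound there) and be done; there is no equidistribution step to carry out and no ``main technical obstacle''. The short-interval counting you sketch would work, but it is not needed.
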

\begin{proof} This follows from Lemma \ref{DL-lemma4.5} with exactly the same proof as in \cite[Lemma 4.6]{DL}. \end{proof}

\begin{lem} \label{approx-phi}
For any $\lambda>0$, let
\begin{align} \label{def-f}
f(y)&:=\frac{1}{2\pi i}\int_{(c)}y^s \frac{e^{\lambda s}-1}{\lambda s}\frac{ds}{s}. \end{align}
Then,
\begin{align*} 
f(y)
&=\frac{1}{\lambda}\int_{0}^\lambda \left(\frac{1}{2\pi i}\int_{(c)}(ye^u)^s\frac{ds}{s}\right)du=
\begin{cases}
1, &\mbox{ if } y>1,\\ 
1+\frac{\log y}{\lambda}, &\mbox{ if } e^{-\lambda}\leq y\leq 1,\\
0, &\mbox{ if } y<e^{-\lambda},
\end{cases}
\end{align*}
and we have
\begin{align*}
f(ye^{-\lambda})\leq \phi(y)\leq f(y),
\end{align*}
where $\phi(y)$ is the indicator function
$$
\phi(y) := \begin{cases}
1, &\mbox{if $y > 0$} ,\\
0, &\mbox{otherwise}.
\end{cases}
$$
\end{lem}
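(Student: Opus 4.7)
The plan is to establish the integral identity first via a Fubini-type argument, then compute the inner Perron-type integral, and finally verify the sandwich bound by monotonicity.

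First, I would write the kernel $\frac{e^{\lambda s}-1}{\lambda s}$ as the Laplace-type integral
$$\frac{e^{\lambda s}-1}{\lambda s} = \frac{1}{\lambda}\int_0^\lambda e^{us}\,du,$$
which holds identically in $s$. Substituting into the definition of $f(y)$ and interchanging the order of integration (justified by absolute convergence on the vertical line $\mathrm{Re}(s)=c>0$, since $u$ ranges over a compact interval and the $s$-integral converges for any fixed $u$) yields the claimed identity
$$f(y) = \frac{1}{\lambda}\int_0^\lambda \left(\frac{1}{2\pi i}\int_{(c)}(ye^u)^s\frac{ds}{s}\right)du.$$

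Next, I would invoke the classical Perron/Mellin formula, which gives
$$\frac{1}{2\pi i}\int_{(c)}w^s\frac{ds}{s} = \begin{cases} 1 & \text{if } w>1,\\ 0 & \text{if } 0<w<1,\end{cases}$$
for $c>0$. Applied with $w=ye^u$, the inner integral equals $1$ precisely when $u>-\log y$. The three cases then fall out by a direct length computation: if $y>1$ then $-\log y<0$ so the condition holds throughout $[0,\lambda]$ and $f(y)=1$; if $y<e^{-\lambda}$ then $-\log y>\lambda$ so the condition never holds and $f(y)=0$; in the intermediate range $e^{-\lambda}\le y\le 1$ the condition holds on the subinterval $(-\log y,\lambda)$ of length $\lambda+\log y$, giving $f(y)=1+\frac{\log y}{\lambda}$.

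Finally, for the sandwich inequality, I would just note that the explicit formula shows $f$ is a non-decreasing function of $y$ taking values in $[0,1]$, and the indicator $\phi$ (of $y>1$) jumps from $0$ to $1$ at $y=1$. When $y>1$, we have $f(y)=1=\phi(y)$, and $f(ye^{-\lambda})\le 1=\phi(y)$ by the range of $f$. When $y<1$, we have $\phi(y)=0\le f(y)$, and moreover $ye^{-\lambda}<e^{-\lambda}$, so $f(ye^{-\lambda})=0=\phi(y)$. This verifies both inequalities simultaneously. There is no substantive obstacle here, only bookkeeping: the whole argument is a standard smoothed Perron construction, with the smoothing parameter $\lambda$ controlling the width of the transition region where $f$ differs from $\phi$.
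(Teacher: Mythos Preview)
The paper states this lemma without proof, as it is a standard smoothed Perron construction; your argument is correct and is exactly the expected one. You also correctly resolved the evident typo in the statement: $\phi$ must be the indicator of $y>1$, not $y>0$ (otherwise $\phi(y)\le f(y)$ fails for $0<y<e^{-\lambda}$), and this is consistent with how the lemma is applied in the proof of Theorem~\ref{main thm for Distribution of RV}. One small quibble: the Fubini step is not literally justified by absolute convergence, since for fixed $u$ the inner Perron integral $\int_{(c)}(ye^u)^s\,ds/s$ is only conditionally convergent; the clean fix is to truncate to $|\mathrm{Im}(s)|\le T$, interchange (trivially, finite integrals), and let $T\to\infty$ on both sides.
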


\begin{proof}[Proof of Theorem \ref{main thm for Distribution of RV}]

We will prove Theorem  \ref{main thm for Distribution of RV} with the saddle-point technique, from which we deduce that $\Phi(\tau)$ is given by a formula involving $\E (|L(1, \X)|^{2r})$ evaluated at the saddle point 
$\kappa  = \kappa(\tau)$ defined below. For $\Psi(\tau)$, the distribution is related to  $\E (|L(1, \X)|^{-2r})$ evaluated at the saddle point $\tilde{\kappa}  = \tilde{\kappa}(\tau)$.

Let $\tau$ be a large real number and consider the differential equations 
\begin{equation}\label{saddlept}
\left(\mathbb{E}(|L(1,\mathbb{X})|^{2r})(e^{\gamma}\tau)^{-2r}\right)'=0 \Leftrightarrow \mathcal{L}'(r)=2(\log \tau +\gamma),
\end{equation}
and 
\begin{equation}\label{saddleptmin}
\left(\mathbb{E}(|L(1,\mathbb{X})|^{-2r})\left(\frac{e^{\gamma}\tau^{2}}{ \zeta(3)}\right)^{-r}\right)'=0 \Leftrightarrow \widetilde{\mathcal{L}}'(r)=2 \log \tau +\gamma -\log \zeta(3),
\end{equation}
where the derivative is taken with respect to the real variable $r$.  It follows from \eqref{Lprime-estimate} that $\lim_{r\to\infty}\mathcal{L}'(r)=\infty$. Moreover, a straightforward calculation shows that 
\begin{align} \label{simple-calc} E''_p(r)E_p(r)>(E'_p(r))^2 \end{align}
for all primes $p$, so that $\mathcal{L}''(r)>0.$ The same calculation leads to \eqref{simple-calc}
with $r$ replaced $-r$, and then we also have that
$\widetilde{\mathcal{L}}''(r) > 0$. Thus, we deduce that \eqref{saddlept} and \eqref{saddleptmin} have unique solutions $\kappa:=\kappa(\tau)$ and $\widetilde{\kappa}:=\widetilde{\kappa}(\tau)$ respectively.

We  write $\Phi(\tau)$ as 
\[\Phi(\tau) = \P \left( (|L(1, \X)|^2 /(e^\gamma \tau))^2> 1\right).\]
Let $A=\frac{|L(1, \X)|^2}{e^{2\gamma} \tau^2}>1$ be an event. Then  \begin{align*}
\phi(A)=\begin{cases}
1, &\mbox{if the event $A$ occurs} ,\\
0, &\mbox{otherwise},
\end{cases}
\end{align*}
is a random variable and $\E(\phi(A))=\Phi(\tau)=\P(A).$

By Lemma \ref{approx-phi}, where $f$ is the function defined by
\eqref{def-f}, we  have 
\begin{align}\label{Ine 1}
\E(f(Ae^{-\lambda}))\leq \E(\phi(A))=\Phi(\tau)\leq \E(f(A)),\end{align}
since $X\leq Y$ implies that $\E(X)\leq \E(Y)$.
Using the definition of $f$ and $A$, we write
\begin{align*}
\E(f(Ae^{-\lambda}))&=\E\left( \frac{1}{2\pi i}\int_{(c)}\frac{|L(1, \X)|^{2s}}{(e^{2\gamma}\tau^2)^s} e^{-\lambda s}\left(\frac{e^{\lambda s}-1}{\lambda s}\right)\frac{ds}{s}\right)\\
&= \frac{1}{2\pi i}\int_{(c)}\frac{\mathbb{E}(|L(1, \X)|^{2s})}{(e^{2\gamma}\tau^2)^s} e^{-\lambda s}\left(\frac{e^{\lambda s}-1}{\lambda s}\right)\frac{ds}{s},
\end{align*}
where $0 < \lambda < 1/(2 \kappa)$ is a real number to the chosen later. With the above equation and  \eqref{Ine 1}, we have 
\begin{align*}
&\frac{1}{2\pi i}\int_{(\kappa)}\frac{\E(|L(1, \X)|^{2s})}{(e^{2\gamma}\tau^2)^s}e^{-\lambda s}\left(\frac{e^{\lambda s}-1}{\lambda s}\right)\frac{ds}{s}\\
&\nonumber \hspace{1cm}  \leq \Phi(\tau) \leq \frac{1}{2\pi i}\int_{(\kappa)}\frac{\E(|L(1, \X)|^{2s})}{(e^{2\gamma}\tau^2)^s}\left(\frac{e^{\lambda s}-1}{\lambda s}\right)\frac{ds}{s},
\end{align*}
and it follows that
\begin{align*}
0 &\leq \frac{1}{2 \pi i} \int_{\kappa-i \infty}^{\kappa+i \infty} \E \left( |L(1, \X)|^{2s} \right) (e^\gamma \tau)^{-2s} \left(\frac{e^{\lambda s} -1}{\lambda s}\right) \frac{ds}{s} - \Phi(\tau) \\
&\leq \frac{1}{2 \pi i} \int_{\kappa-i \infty}^{\kappa+i \infty} \E \left( |L(1, \X)|^{2s} \right) ( e^\gamma \tau)^{-2s} \left(\frac{e^{\lambda s} -1}{\lambda s}\right) \left(\frac{1 - e^{-\lambda s}}{s}\right) ds.
\end{align*}

We remark that  since $\lambda \kappa < \frac{1}{2}$ by the choice of $\lambda$, we have $|e^{\lambda s} - 1| < 3$ and $|e^{-\lambda s} - 1| < 2$.
For $T\ll \kappa$ 
(to be chosen later), using Lemma \ref{DL-lemma4.6}, we have
\begin{align*}
&\int_{\kappa + i T}^{\kappa + i \infty} \E \left( |L(1, \X)|^{2s} \right) (e^\gamma \tau)^{-2s}\left( \frac{e^{\lambda s} -1}{\lambda s}\right) \frac{ds}{s}\\
&\ll \frac{\E \left( |L(1, \X)|^{2\kappa} \right) ( e^\gamma \tau)^{-2\kappa}}{\lambda}\left(\int_{T}^{\kappa/2}   \exp \left( - b_2 \frac{t^2}{\kappa \log \kappa} \right) \frac{dt}{\kappa^2 + t^2} + \int_{\kappa/2}^\infty   \exp \left( - b_2 \frac{t}{\log t} \right)  \frac{dt}{\kappa^2 + t^2} \right)\\
&\ll \frac{\E \left( |L(1, \X)|^{2\kappa} \right) (e^\gamma \tau)^{-2\kappa}}{\lambda}\left(\frac{e^{-c_1\frac{T^2}{\kappa \log \kappa}}}{T}+\frac{e^{-c_2\frac{\kappa}{\log \kappa}}}{\kappa}\right),
\end{align*}
with some absolute constants $c_1$ and $c_2$.

Furthermore, when $|t| \leq T$, and $s = \kappa + it$ then $| (1 - e^{-\lambda s}) (e^{\lambda s} -1) | \ll \lambda^2 |s|^2$, and we have
\begin{align*}
\int_{\kappa-i T}^{\kappa+i T}  \E \left( |L(1, \X)|^{2s} \right) (e^\gamma \tau)^{-2s} \left(\frac{e^{\lambda s} -1}{\lambda s} \right)\left( \frac{1-e^{-\lambda s}}{s} \right)
ds \ll \lambda T \E \left( |L(1, \X)|^{2\kappa} \right) ( e^\gamma \tau)^{-2\kappa},
\end{align*}
which gives
 \begin{align} \label{approx-Phi}
 \Phi(\tau)  - \frac{1}{2 \pi i} \int_{\kappa-i T}^{\kappa+i T} \E \left( |L(1, \X)|^{2s} \right) (e^\gamma \tau)^{-2s} \left(\frac{e^{\lambda s} -1}{\lambda s}\right) \frac{ds}{s} \\ \nonumber
\ll  \left(\frac{e^{-c_1\frac{T^2}{\kappa \log \kappa }}}{\lambda T}+\frac{e^{-c_2\frac{\kappa}{\log \kappa}}}{\lambda \kappa} +\lambda T\right)\E \left( |L(1, \X)|^{2\kappa} \right) ( e^\gamma \tau)^{-2\kappa}.
 \end{align}
 
 We now give a bound for the integral between $\kappa - iT$ and $\kappa + iT$.
 It follows from \eqref{Lhigherderivs-estimate} of Proposition \ref{L-est-prop}  that when $|t| \leq T$, 
 \begin{align*}
 \cL(\kappa+it) = \cL(\kappa) + it \cL'(\kappa) - \frac{t^2 \cL''(\kappa)}{2} + O \left(   \frac{|t|^3}{\kappa^2 \log{\kappa}} \right).
 \end{align*}
 We also have
 \begin{align*}
 \frac{e^{\lambda s}-1}{\lambda s^2} = \frac{1}{s} \left( 1 + O \left( \lambda \kappa \right) \right) = \frac{1}{\kappa} \left( 1 - i \frac{t}{\kappa} + O \left( \lambda \kappa + \frac{t^2}{\kappa^2} \right) \right).
 \end{align*}
 Writing $\E \left( |L(1, \X ) |^{2s}\right) = \exp \left( \cL(s) \right)$ and $\cL'(\kappa) = 2(\log{\tau} + \gamma)$, and using the above two equations, we have
 \[
 \frac{\E \left( |L(1, \X)|^{2s} \right)}{(e^\gamma \tau)^{2s}}
\left(\frac{e^{\lambda s}-1}{\lambda s^2}\right)=\frac{\E(|L(1, \X)|^{2\kappa})}{\kappa(e^\gamma \tau)^{2\kappa}}\exp\left(-\frac{t^2}{2}\cL''(\kappa)\right)\left(1-i\frac{t}{\kappa}+O\left(\lambda \kappa +\frac{t^2}{\kappa^2}+\frac{|t|^3}{\kappa^2 \log \kappa}\right)\right),
 \]
 since
 \begin{align*} (e^\gamma \tau)^{-2s} \exp{\left( i t \cL'(\kappa) \right)} = (e^\gamma \tau)^{-2 \kappa - 2 i t} \exp{\left( 2 i t \log \tau \right)} \exp{\left( 2 i t \gamma \right)}= ( e^\gamma \tau)^{-2 \kappa}.
 \end{align*}
 Therefore,
 \begin{align*}
& \frac{1}{2\pi i}\int_{\kappa-i T}^{\kappa+iT}\frac{\E \left( |L(1, \X)|^{2s} \right)}{( e^\gamma \tau)^{2s}} \left(\frac{e^{\lambda s}-1}{\lambda s} \right)\; \frac{ds}{s} \\
&=\frac{\E(|L(1, \X)|^{2\kappa})}{\kappa(e^\gamma \tau)^{2\kappa}}\frac{1}{2\pi} \int_{-T}^{T}\exp\left(-\frac{t^2}{2}\cL''(\kappa)\right)\left(1-i\frac{t}{\kappa}+O\left(\lambda \kappa 
+\frac{t^2}{\kappa^2}+\frac{|t|^3}{\kappa^2 \log \kappa}\right)\right)dt.
 \end{align*}
 Since $\cL''(\kappa) \asymp 1/(\kappa \log{\kappa})$ by \eqref{Lhigherderivs-estimate} of Proposition \ref{L-est-prop}, we have
\begin{align*}
 \frac{1}{2\pi} \int_{-T}^{T}\exp\left(-\frac{t^2}{2}\cL''(\kappa)\right)dt &=\frac{1}{\sqrt{2\pi \cL''(\kappa)}}\left(1+O\left(e^{-c_3T^2\cL''(\kappa)}\right)\right),\\
 \int_{-T}^T |t|^n\exp\left(-\frac{t^2}{2}\cL''(\kappa)\right)dt &\ll \frac{1}{\cL''(\kappa)^{\frac{n+1}{2}}} \hspace{1cm} 1 \leq n \leq 3,
 \end{align*}
 and replacing above, we get
 \begin{align*}
 &\frac{1}{2\pi i}\int_{\kappa-i T}^{\kappa+iT}\frac{\E \left( |L(1, \X)|^{2s} \right)}{( e^\gamma \tau)^{2s}} \left(\frac{e^{\lambda s}-1}{\lambda s}\right)\frac{ds}{s}\\
 &=\frac{\E(|L(1, \X)|^{2\kappa})}{\kappa \sqrt{2\pi \cL''(\kappa)}( e^\gamma \tau)^{2\kappa}}\left(1+ O\left(\lambda \kappa+ \frac{1}{\kappa^2\cL''(\kappa)}+\frac{1}{\kappa^2 \log \kappa \cL''(\kappa)^{\frac{3}{2}}}+e^{-c_3T^2\cL''(\kappa)}\right)\right),
 \end{align*}
 for some absolute constant $c_3$.
Replacing in \eqref{approx-Phi}, and using again \eqref{Lhigherderivs-estimate}  of Proposition \ref{L-est-prop} to get
$$
\kappa \sqrt{2 \pi \mathcal{L}''(\kappa)} \ll \frac{\kappa^{\frac{1}{2}}}{(\log{\kappa})^{\frac{1}{2}}},
$$
we finally obtain
 \begin{align*}
 \Phi(\tau)&=\frac{\E(|L(1, \X)|^{2\kappa})}{\kappa \sqrt{2\pi \cL''(\kappa)}( e^\gamma \tau)^{2\kappa}}\left(1+ O\left(\lambda \kappa+ \frac{1}{\kappa^2\cL''(\kappa)}+\frac{1}{\kappa^2 \log \kappa \cL''(\kappa)^{\frac{3}{2}}}+e^{-c_3T^2\cL''(\kappa)}\right. \right.\\
 & \left.\left. +\frac{e^{-c_1\frac{T^2}{\kappa \log \kappa}}}{\lambda T}\frac{\kappa^{\frac{1}{2}}}{(\log \kappa)^{\frac{1}{2}}}+\frac{e^{-c_2\frac{\kappa}{\log \kappa}}}{\lambda \kappa}\frac{\kappa^{\frac{1}{2}}}{(\log \kappa)^{\frac{1}{2}}} +\lambda T\frac{\kappa^{\frac{1}{2}}}{(\log \kappa)^{\frac{1}{2}}} \right)\right).
 \end{align*}
We choose  $T=\kappa^{\delta}$ for any $1/2<\delta<1$ and $\lambda=\kappa^{-2}$, which can be done by taking $\tau$ sufficiently large by \eqref{tau and kappa connection}. We conclude that 
 \[
 \Phi(\tau)=\frac{\E(|L(1, \X)|^{2\kappa})}{\kappa \sqrt{2\pi \cL''(\kappa)}(e^\gamma \tau)^{2\kappa}}\left(1+ O\left(\sqrt{\frac{\log \kappa}{\kappa}}\right)\right).
 \]
The proof is similar for $\Psi(\tau)$. This completes the proof of Theorem \ref{main thm for Distribution of RV}.

\end{proof}

\subsection{Proofs of Theorems \ref{thm-RV} and \ref{theorem-relating-to-RV}.}
\label{proofof-1.2-1.3}

\begin{proof}[Proof of Theorem \ref{thm-RV}]
Recall that $\E\left(|L(1, \X)|^{2r}\right)=\exp\left( \cL(r) \right)$, and from Proposition \ref{L-est-prop}, we have
\begin{align*}
\cL(r)&=2r \log \log r +2 \gamma r +\frac{2r (C_{\max}-1) }{\log r}+O\left(\frac{r}{(\log r)^2}\right), \\
\cL'(r)&=2 \log \log r +2 \gamma +\frac{2C_{\max} }{\log r}+O\left(\frac{1}{(\log r)^2}\right).
\end{align*}
Recall that $\kappa = \kappa(\tau)$ is the unique solution to $\cL'(\kappa)=2(\log \tau+\gamma)$. Therefore, 
\begin{align}\label{tau and kappa connection}
\log \tau =\log \log \kappa +\frac{C_{\max}}{\log \kappa}+O\left(\frac{1}{(\log \kappa)^2}\right).
\end{align}
Using these estimates and Theorem \ref{main thm for Distribution of RV}, we obtain
\begin{align*}
 \Phi(\tau)&=\frac{\E(|L(1, \X)|^{2\kappa})}{\kappa \sqrt{2\pi \cL''(\kappa)}(e^\gamma \tau)^{2\kappa}}\left(1+ O\left(\sqrt{\frac{\log \kappa}{\kappa}}\right)\right)\\
 &= \exp\left(\cL(\kappa)-2\kappa(\log \tau + \gamma)+O(\log \kappa)\right)\\
 &=\exp\left(-\frac{2\kappa}{\log \kappa}+O\left(\frac{\kappa}{(\log \kappa)^2}\right)\right)=\exp\left(-\frac{2e^{\tau-C_{\max}}}{\tau}\left(1+O(\tau^{-1})\right)\right),
\end{align*}
since $ \log \kappa = \tau - C_{\max} + O(\tau^{-1} )$ from \eqref{tau and kappa connection}.

The proof is similar for $\Psi(\tau)$ by noting that 
$\E\left(|L(1, \X)|^{-2r}\right)=\exp\left( \widetilde{\cL}(r) \right)$, where 
\begin{align*}
\widetilde{\cL}(r)&= r\log \log r +r \gamma + \frac{r(C_{\min}-1)}{\log r}-r\log \zeta(3)+
O\left( \frac{r}{(\log r)^2}\right),\\
\widetilde{\cL}'(r)&= \log \log r + ( \gamma - \log{\zeta(3)}) +\frac{C_{\min} }{\log r}+O\left(\frac{1}{(\log r)^2}\right).
\end{align*}
Let $\widetilde{\kappa}$ be the unique solution to $\widetilde{\cL}'(\widetilde{\kappa})=2 \log \tau+\gamma -\log \zeta(3))$.
Then, 
\begin{align}\label{tau and kappa connection-2}
2\log \tau
 =\log \log \tilde{\kappa} +\frac{C_{\min}}{\log \tilde{\kappa}}+O\left(\frac{1}{(\log \tilde{\kappa})^2}\right).
\end{align}
Using these estimates and Theorem \ref{main thm for Distribution of RV}, we obtain
\begin{align*}
 \Psi(\tau)&=\frac{\E(|L(1, \X)|^{-2\tilde{\kappa}})\zeta(3)^{\tilde{\kappa}}}{\tilde{\kappa} \sqrt{2\pi \cL''(\kappa)}(e^\gamma \tau^2)^{\tilde{\kappa}}}\left(1+ O\left(\sqrt{\frac{\log \tilde{\kappa}}{\tilde{\kappa}}}\right)\right)\\
 &= \exp\left(\widetilde{\cL}(\tilde{\kappa})-\tilde{\kappa}(2\log \tau + \gamma-\log \zeta(3))+O(\log \tilde{\kappa})\right)\\
 &=\exp\left(-\frac{\tilde{\kappa}}{\log \tilde{\kappa}}+O\left(\frac{\tilde{\kappa}}{(\log \tilde{\kappa})^2}\right)\right)=\exp\left(-\frac{e^{\tau^2-C_{\min}}}{ \tau^2}\left(1+O(\tau^{-1})\right)\right),
\end{align*}
since $ \log \tilde{\kappa} =  \tau^2 - C_{\min} + O(\tau^{-2} )$ from \eqref{tau and kappa connection-2}.

\end{proof}

	\begin{proof}[Proof of Theorem \ref{theorem-relating-to-RV}]

	We adapt the  proof of \cite[Theorem 1]{GS06} to our context.
		For any $r\geq 1$, we have
		\begin{align*}
		2r\int_{0}^{\infty}t^{2r-1}\phi_X(t)dt&=\frac{2r}{|\mathcal{F}_3(X)|} \sum_{\chi\in \mathcal{F}_3(X)}\int_{0}^{|L(1, \chi)|e^{-\gamma}}t^{2r-1}dt=\frac{e^{-2r\gamma}}{|\mathcal{F}_3(X)|}\sum_{\chi\in \mathcal{F}_3(X)}|L(1, \chi)|^{2r}.
		\end{align*}
		Then, by Theorem \ref{Asympformula}, uniformly for $r\leq \frac{\log X}{16 \log_2 X \log_3 X}$, we get 
		\begin{align*}
		&2r \int_{0}^{\infty}t^{2r-1}\phi_X(t)dt=e^{-2\gamma r} E\left(|L(1, \X)|^{2r}\right)+e^{-2\gamma r}\exp\left(-\frac{\log X}{16\log_2X}\right),
		\end{align*}
		 and by Proposition \ref{L-est-prop}, we have
		 \begin{align*}
		 E\left(|L(1, \X)|^{2r}\right)&=\exp\left(2r \log \log r +2r \gamma + \frac{2r(C_{\max}-1)}{\log r}+
		 O\left( \frac{r}{(\log r)^2}\right)\right)\\
		 &=(\log r)^{2r}e^{2\gamma r} \exp\left(\frac{2r(C_{\max}-1)}{\log r}+
		 O\left( \frac{r}{(\log r)^2}\right)\right).
		 \end{align*}
		
		This implies that 
		\begin{align}\label{asymp for kappa}
		\int_{0}^{\infty}t^{2r-1}\phi_X(t)dt &=\frac{(\log r)^{2r}}{2r}\exp\left(\frac{2r}{\log r}(C_{\max}-1)+O\left(\frac{r}{(\log r)^2}\right)\right)+\frac{1}{2r}\exp\left(-2\gamma r-\frac{\log X}{16\log_2X}\right) \nonumber\\
		&=(\log r)^{2r}\exp\left(\frac{2r}{\log r}(C_{\max}-1)+O\left(\frac{r}{(\log r)^2}\right)\right).
		\end{align}

		Let  $r_2=r e^{\delta}$, where $\delta>0$ is sufficiently small to be determined later. We apply  \eqref{asymp for kappa} with $r_2$ to obtain  
		\begin{align*}
		\int_{\tau+\delta}^{\infty}t^{2r-1}\phi_X(t)dt&\leq (\tau +\delta)^{2r-2r_2}\int_{\tau+\delta}^{\infty}t^{2r_2-1}\phi_X(t)dt  \leq (\tau +\delta)^{2r-2r_2}\int_{0}^{\infty}t^{2r_2-1}\phi_X(t)dt \\
		&= (\tau +\delta)^{2r(1-e^{\delta})}(\log r+\delta)^{2r e^{\delta}}\exp\left(\frac{2r e^{\delta}}{\log r +\delta}(C_{\max}-1)+O\left(\frac{r}{(\log r)^2}\right)\right) \\
		&\leq \tau^{2r(1-e^{\delta})}(\log r)^{2re^{\delta}}  \exp\left(2r(1-e^{\delta})\log \left(1+\frac{\delta}{\tau}\right)+2r e^{\delta}\log \left(1+\frac{\delta}{\log r}\right)\right) \\
		& \times \exp\left(\frac{2r e^{\delta}}{\log r}\left(C_{\max}-1\right)+O\left(\frac{r}{(\log r)^2}\right)\right) \\
		&=  (\log{r} )^{2 r} \exp\left(2r(1-e^{\delta})\log \left(1+\frac{\delta}{\tau}\right)+2r e^{\delta}\log \left(1+\frac{\delta}{\log r}\right)\right) \\
		& \times \exp\left(\frac{2r e^{\delta}}{\log r}\left(C_{\max}-1\right)+O\left(\frac{r}{(\log r)^2}\right)\right)  \exp \left( 2r (1-e^\delta) \left( \log{\tau} - \log_2{r} \right) \right), \end{align*}
		which holds uniformly for 
		\begin{align} \label{uniform-range}
		r_2 \leq \frac{\log X}{16 \log_2 X \log_3 X} \iff r\leq  e^{-\delta}\frac{\log X}{16 \log_2 X \log_3 X}.
		\end{align}
		We now choose $r = r(\tau)$ by $\log{r} = \tau - C_{\max}$. This gives
				\begin{align*}
		\int_{\tau+\delta}^{\infty}t^{2r-1}\phi_X(t)dt&\leq (\log r)^{2r} \exp\left(2r(1-e^{\delta})\log \left(1+\frac{\delta}{\tau}\right)+2r e^{\delta}\log \left(1+\frac{\delta}{\log r}\right)\right) \\
		& \times \exp \left( \frac{2r}{\log{r}} \left( C_{\max} - e^{\delta} + O \left( \frac{1}{\log{r}} \right) \right) \right)\\
		&\leq (\log r)^{2r}\exp\left(\frac{2r}{\log r}\left(C_{\max}+\delta-e^{\delta}\right)+O\left(\frac{r}{(\log r)^2}\right)\right).
		\end{align*}
		Using $\delta=\frac{c}{\sqrt{\log r}}$ in the last equation, and the fact that  $1 + \delta - e^\delta  = - \frac12 \delta^{2} + O(\delta^3)$, the last line becomes
						\begin{align*}
		\int_{\tau+\delta}^{\infty}t^{2r-1}\phi_X(t)dt& \leq (\log r)^{2r}\exp\left(\frac{2r}{\log r}\left(C_{\max}-1\right)+O\left(\frac{r}{(\log r)^2}\right)\right) \exp \left( {-\frac{c^2r}{(\log r)^2}} \right)		\end{align*}
		adjusting the constant $c$. Using \eqref{asymp for kappa}, we have proven that
		\begin{align} \label{1st ineqC}
		\int_{\tau+\delta}^{\infty}t^{2r-1}\phi_X(t)dt &\leq \left(\int_{0}^{\infty}t^{2r-1}\phi_X(t)dt\right) \exp \left( {-\frac{c^2r}{(\log r)^2}} \right),
		\end{align}
	and similarly, we can prove that
		\begin{align}\label{2nd ineqC}
		\int_0^{\tau-\delta}t^{2r-1}\phi_X(t)dt\leq \left(\int_{0}^{\infty}t^{2r-1}\phi_X(t)dt\right) \exp \left( {-\frac{c^2r}{(\log r)^2}} \right).
		\end{align}
		The exponential decay of \eqref{1st ineqC} and \eqref{2nd ineqC} implies that most of the mass of the integral is between $\tau - \delta$ and $\tau+\delta$ when $r$ grows.		
		More precisely, {again using \eqref{asymp for kappa}}, we have
		\begin{align} 
		\int_{\tau-\delta}^{\tau+\delta}t^{2r-1}\phi_X(t)dt=& \int_0^{\infty} t^{2r-1}\phi_X(t)dt \left( 1 + O \left( \exp \left( {-\frac{c^2r}{(\log r)^2}} \right)\right) \right) \nonumber\\
		&= (\log r)^{2r}\exp\left(\frac{2r}{\log r}(C_{\max}-1)+O\left(\frac{r}{(\log r)^2}\right)\right) \nonumber\\ & \;\;\;\;\;\; \times \exp\left( \log \left( 1 + O \left( \exp \left( {-\frac{c^2r}{(\log r)^2}} \right)\right) \right) \right) \nonumber \\
		&= (\log r)^{2r}\exp\left(\frac{2r}{\log r}(C_{\max}-1)+O\left(\frac{r}{(\log r)^2}\right)\right). \label{main distn term}
		\end{align}
We have		\[
		\int_{\tau-\delta}^{\tau+\delta}t^{2r-1}dt=\frac{(\tau + \delta)^{2r} - (\tau-\delta)^{2r}}{2r} = \tau^{2r} \exp\left(O\left(\frac{\delta r}{\tau}\right)\right),
		\]
		and since $\phi_X(t) = \P \left( |L(1, \chi)| > e^\gamma t \right)$ is non-increasing, this gives
		\[
		\tau^{2r}\exp\left(O\left(\frac{\delta r}{\tau}\right)\right)\phi_X(\tau+\delta)\leq \int_{\tau-\delta}^{\tau+\delta}t^{2r-1}\phi_X(t)dt \leq \tau^{2r}\exp\left(O\left(\frac{\delta r}{\tau}\right)\right)\phi_X(\tau-\delta).
		\]
		Replacing the middle term by  \eqref{main distn term}, we have 
		\[
		\phi_X(\tau+\delta)\leq \left(\frac{\log r}{\tau}\right)^{2r}\exp\left(\frac{2r}{\log r}(C_{\max}-1)+O\left(\frac{r}{(\log r)^2}\right)+O\left(\frac{\delta r}{\tau}\right)\right)\leq \phi_X(\tau-\delta).
		\]
			Since $\tau=\log r+C_{\max} \iff r=e^{\tau- C_{\max}}$, we have that the middle expression above is 
		\begin{align*}
		&=\exp\left(\frac{2r}{\log r}(C_{\max}-1) + 2r \log \left(1-\frac{C_{\max}}{\tau} \right)+O\left(\frac{\delta r}{\tau}\right)\right)\\
		& = \exp\left(\frac{2r}{\log r}(C_{\max}-1)-\frac{2rC_{\max}}{\tau}+O\left(\frac{r}{\tau^2}\right)+O\left(\frac{\delta r}{\tau}\right)\right)\\
		&=\exp\left(-\frac{2r}{\log r}+O\left(\frac{r}{\tau^2}\right)+O\left(\frac{\delta r}{\tau}\right)\right)\\
			&=\exp\left(-\frac{2e^{\tau-C_{\max}}}{\tau}\left(1+O\left(\delta\right)+O(1/\tau)\right)\right).
\end{align*}		Therefore,
		 \[
		\phi_X(\tau+\delta)\leq \exp\left(-\frac{2e^{\tau-C_{\max}}}{\tau}\left(1+O\left(\delta\right)\right)\right)\leq \phi_X(\tau-\delta),
		\]
		and we have
				\[
		\phi_{X}(\tau)=\exp\left(-\frac{2e^{\tau-C_{\max}}}{\tau}\left(1+O\left(\frac{1}{\tau^{\frac{1}{2}}}\right)\right)\right).
		\]
				Recalling \eqref{uniform-range}, this holds uniformly in the range
		\begin{align*}
		\tau&=\log r+C_{\max}\leq \log \left( e^{-\delta}\frac{\log X}{16 \log_2 X \log_3 X}\right) + C_{\max} \\
&\leq \log_2 X-\log_3 X-\log_4 X-2.
		\end{align*}
		A similar estimate holds for  $\psi_{X}(\tau)$.
			\end{proof}	

\section{Extreme values of $|L(1,\chi)|$} \label{sec:omega}
In this section, we investigate extreme values of $|L(1,\chi)|$ (under GRH). Our first goal is to prove Proposition \ref{thm:littlewood-gen},  exhibiting the upper and lower bounds for characters of prime order $\ell \geq 3$
following the work of Littlewood \cite{Littlewood-1928} for quadratic characters.

We will focus on the lower bound, since the upper bound can be obtained by the same arguments given by Littlewood. 
We first recall the following result. 
\begin{lem}\cite[Lemma 7]{Littlewood-1928}
 Let $\chi$ be a non-principal character of conductor $q$. Assume GRH for $L(s, \chi)$. Then, as $y\rightarrow 0$, 
\begin{align}\label{log equation at 1}
-\log |L(1, \chi)|=-\re\left(\sum_{n=1}^\infty\frac{\Lambda_1(n)\chi(n)}{n}e^{-ny}\right)+O_\epsilon(y^{1/2-\epsilon}\log q)+o(1),
\end{align}
where $\Lambda_1(n)=\frac{\Lambda(n)}{\log n}$.
\end{lem}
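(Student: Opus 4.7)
The natural starting point is the Mellin transform identity
\[
e^{-u} = \frac{1}{2\pi i}\int_{(c)} \Gamma(s)\, u^{-s}\, ds, \qquad c>0,\ u>0,
\]
applied with $u=ny$. Substituting into the smoothed Dirichlet sum on the right-hand side of \eqref{log equation at 1} and interchanging the absolutely convergent sum and integral for $c>0$ gives
\[
\sum_{n=1}^\infty \frac{\Lambda_1(n)\chi(n)}{n}\,e^{-ny} = \frac{1}{2\pi i}\int_{(c)} \Gamma(s)\,y^{-s}\,\log L(1+s,\chi)\, ds,
\]
since $\sum_n \Lambda_1(n)\chi(n)/n^{1+s} = \log L(1+s,\chi)$ for $\re(s)>0$ under the principal branch inherited from the Euler product.

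Next I would shift the contour from $\re(s)=c$ down to $\re(s)=-\tfrac{1}{2}+\epsilon$. Because $\chi$ is non-principal, $L(s,\chi)$ is entire, and under GRH all its nontrivial zeros lie on $\re(s)=\tfrac{1}{2}$; hence $\log L(1+s,\chi)$ is holomorphic in the open strip $-\tfrac{1}{2}<\re(s)<c$. The only singularity of the integrand in this strip is the simple pole of $\Gamma(s)$ at $s=0$, and since $y^{-s}\log L(1+s,\chi)$ is holomorphic and equals $\log L(1,\chi)$ at $s=0$, the residue contributes exactly $\log L(1,\chi)$. The horizontal truncation pieces at $\im(s)=\pm T$ vanish as $T\to\infty$ thanks to the exponential decay $|\Gamma(\sigma+it)|\ll_\sigma |t|^{\sigma-1/2}e^{-\pi|t|/2}$ together with polynomial-in-$\log(q|t|)$ growth of $\log L$. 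This yields
\[
\sum_{n=1}^\infty \frac{\Lambda_1(n)\chi(n)}{n}\,e^{-ny} = \log L(1,\chi) + \frac{1}{2\pi i}\int_{(-\frac{1}{2}+\epsilon)} \Gamma(s)\,y^{-s}\,\log L(1+s,\chi)\, ds.
\]

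It remains to estimate the shifted integral. On $\re(s)=-\tfrac{1}{2}+\epsilon$ one has $|y^{-s}|=y^{1/2-\epsilon}$, and the classical conditional bound of Littlewood gives $|\log L(\tfrac{1}{2}+\epsilon+it,\chi)|\ll_\epsilon \log(q(|t|+2))$ (in fact with a smaller power of the logarithm, but this crude form suffices). Combined with the exponential decay of $|\Gamma(-\tfrac{1}{2}+\epsilon+it)|$, which localizes the integral to $|t|=O(1)$, this produces the bound $O_\epsilon(y^{1/2-\epsilon}\log q)$ on the shifted integral. Taking real parts of both sides and rearranging then gives the stated identity, with the $o(1)$ error absorbing lower-order corrections for fixed $\chi$ as $y\to 0$.

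The main obstacle is the uniform GRH-based bound on $\log L(s,\chi)$ along $\re(s)=\tfrac{1}{2}+\epsilon$: near any zero of $L$ on the critical line, $\log L$ blows up, so the argument genuinely relies on GRH forcing the zeros onto $\re=\tfrac{1}{2}$ and leaving $\re=\tfrac{1}{2}+\epsilon$ clear. The classical estimate $\log L(\sigma+it,\chi)\ll (\log q(|t|+2))^{2(1-\sigma)}/\log\log(q(|t|+2))$ for $\tfrac{1}{2}<\sigma<1$ is more than enough. By contrast, the contour shift and the interchange of sum and integral are routine, and the residue computation at $s=0$ is elementary.
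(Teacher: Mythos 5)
Your proof is correct, and it is essentially the classical argument behind the result: the paper offers no proof of its own but simply cites Littlewood's Lemma 7, whose derivation is the same Mellin/Gamma-kernel contour shift to $\re(s)=-\tfrac12+\epsilon$ using the GRH zero-free half-plane and the conditional bound $\log L(\tfrac12+\epsilon+it,\chi)\ll_\epsilon\log\bigl(q(|t|+2)\bigr)$. Your version in fact yields the slightly stronger conclusion without the extra $o(1)$ term, which is consistent with the statement as quoted.
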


\begin{proof}[Proof of Proposition \ref{thm:littlewood-gen}]
 To prove the lower bound, we follow the proof of  \cite[Theorem 1]{Littlewood-1928}. Taking $y=(\log q)^{-2-7\epsilon}$ and $x=(\log q)^{2+8\epsilon}$ in \eqref{log equation at 1}, we have that
 \[O_\epsilon(y^{1/2-\epsilon}\log q)=o(1)\qquad \mbox{ and } \qquad  \sum_{n\geq x}\frac{\Lambda_1(n)\chi(n)}{n}e^{-ny}=o(1),
\] 
and we obtain
 \begin{align*}
 -\log |L(1, \chi)|&=-\re\left(\sum_{p^m\leq x}\frac{\Lambda_1(p^m)\chi(p^m)}{p^m}e^{-p^my}\right)+o(1)\\
 &=-\re \left(\sum_{p\leq x}\frac{\chi(p)}{p}e^{-py}\right)-\re\left(\sum_{\substack{p^m\leq x\\ m>1}}\frac{\chi(p)^m}{mp^m}e^{-p^my}\right)+o(1)\\
 &= -\sum_{p\leq x} \left(\frac{\re(\chi(p))}{p}e^{-py}+\sum_{m=2}^{M_p}\frac{\re(\chi(p))^m)}{mp^m}\right)+o(1)\\
 &=:-\sum_{p\leq x} A_p+o(1),
 \end{align*}
 where $M_p=[\log x/\log p]$ and we have used the fact that 
 \[\sum_{\substack{p^m\leq x\\ m>1}}\frac{1-e^{-p^my}}{mp^m}=O(yx^{\frac{1}{2}})=o(1).\]
 Note that $\chi(p)\in \{0, 1, \omega_\ell, \dots, \omega_\ell^{\ell-1}\}$. If $\chi(p)=0$ or 1,   then $A_p\geq  0$. 
 Let $\chi(p)=\omega_\ell^h$. Then $\chi(p)^m=\omega_\ell^{hm}$ for any $m\geq 1$. Thus,
 \begin{align*}
 A_p=&\frac{(\omega_\ell^h+\omega_\ell^{-h})(e^{-py}-1)}{2p}+\frac{1}{2} \left(\frac{\omega_\ell^h+\omega_\ell^{-h}}{p}+ \frac{\omega_\ell^{2h}+\omega_\ell^{-2h}}{2p^2}+\frac{\omega_\ell^{3h}+\omega_\ell^{-3h}}{3p^3}-\cdots \right)\\
 =&\frac{\cos\left(\frac{2\pi h}{\ell}\right)(e^{-py}-1)}{p}-\frac{1}{2}\log\left(1-\frac{2\cos\left(\frac{2\pi h}{\ell}\right)}{p}+\frac{1}{p^2}\right).
 \end{align*}
 When $\cos\left(\frac{2\pi h}{\ell}\right)<0$, the first term above is positive, and we can bound $A_p$ below by the second term involving the $\log$. 
 Otherwise, notice that $\frac{e^{-py}-1}{p}\geq -y=-(\log q)^{-2+7\epsilon}$. Summing over $p$, we obtain
 \[
 -\log |L(1, \chi)|\leq \frac12 \sum_{p\leq x}\log\left(1-\frac{2\cos\left(\frac{2\pi h}{\ell}\right)}{p}+\frac{1}{p^2}\right) + O\left((\log q)^{-2+7\epsilon}\right).
 \]
We remark that, since $\ell$ is odd, the real part of  $\omega_\ell^h$ is minimized when $h=\frac{\ell-1}{2}$ or $h=\frac{\ell+1}{2}$. In other words, 
$-\cos\left(\frac{2\pi h}{\ell}\right)\leq -\cos\left(\frac{\pi (\ell-1)}{\ell}\right)=\cos\left(\frac{\pi}{\ell}\right)$. Thus we have 
 \begin{align*}
 &\frac12 \sum_{p\leq x} \log \left(1-\frac{2\cos\left(\frac{2\pi h}{\ell}\right)}{p}+\frac{1}{p^2}\right)
 \leq \frac12 \log \prod_{p\leq x}\left(1+\frac{2\cos\left(\frac{\pi}{\ell}\right)}{p}+\frac{1}{p^2}\right)\\
& =\frac{1}{2}\log  \prod_{p\leq x} \left(1-\frac{1}{p}\right)^{-2\cos\left(\frac{\pi}{\ell}\right)}  +\frac12 \log \prod_{p\leq x}\left[\left(1+\frac{2\cos\left(\frac{\pi}{\ell}\right)}{p}+\frac{1}{p^2}\right) \left(1-\frac{1}{p}\right)^{2\cos\left(\frac{\pi}{\ell}\right)} \right]\\
&=\cos\left(\frac{\pi }{\ell}\right) \log \left(e^{\gamma} \log x\right) +\log \left(1+O\left(\frac{\log x}{ \sqrt{x}}\right)\right)-\log \left(C_\ell\right),
 \end{align*}
 where  
 \[C_\ell=\prod_{p}\left[ \left(1-\frac{1}{p}\right)^{-\cos\left(\frac{\pi }{\ell}\right)}\left(1+\frac{2\cos\left(\frac{\pi}{\ell}\right)}{p}+\frac{1}{p^2}\right)^{-1/2}\right],\]
 and we have applied Mertens' third theorem (with the error term coming from assuming RH). Taking the exponential and recalling our choice of $x=(\log q)^{2+5\epsilon}$, we conclude that
 \[ |L(1, \chi)|\geq  \frac{C_\ell}{\left(2e^{\gamma}\log \log q\right)^{\cos\left(\frac{\pi}{\ell}\right)}}\left(1+O\left(\frac{\log_2 q}{\log q}\right)\right). \]
 
 \end{proof}

To prove the $\Omega$-results, we need the following auxiliary statements. 
\begin{prop}\label{main prop for omega result}
	Assume GRH for Hecke $L$-functions over $\Q(\omega_3)$.  Let $P(z) = \prod_{\substack{p \leq z}} p^2 = e^{2z + o(z)}.$ Let $\epsilon_p \in \{ 1, \omega_3, \omega_3^2 \}$ for each rational prime $p \leq z$, and let $$\mathcal{P}(X, \{ \epsilon_p \},z)=\{\chi \in \mathcal{F}_3(X) \,|\, \mathrm{cond}(\chi) \text{ is prime and } \chi(p)=\epsilon_p  \text{ for each } p \leq z\}.$$
	
	Then, for $z \ll X^\frac{1}{6}$,
	\begin{align*}
	\sum_{\chi \in \mathcal{P}(X, \{ \epsilon_p \},z)} \log \mathrm{cond}(\chi) = \frac{X}{ 3^{\pi(z)}} + O(X^{\frac{1}{2}} \log^2 (XP(z)),
	\end{align*}
	and 
	\begin{align*}
	\sum_{\chi \in \mathcal{P}(X, \{ \epsilon_p \},z)} L(1, \chi) \log  \mathrm{cond}(\chi) = \frac{X}{3^{\pi(z)}} \zeta(3) \prod_{\substack{p\leq z}}\left(1+\frac{\epsilon_p^{-2}}{p}+\frac{\epsilon_p^{-1}}{p^2}\right)+O(X^{\frac{1}{2}}\log^3 (XP(z))).
	\end{align*}	
	
	\end{prop}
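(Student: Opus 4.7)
The plan is to use character orthogonality modulo each prime conductor $q$ to detect the constraints $\chi(p)=\epsilon_p$, and then to reduce the resulting weighted prime sums to effective Chebotarev estimates (under GRH) for the degree-$6$ extensions $K_m:=\Q(\omega_3,m^{1/3})$. Since a primitive cubic character $\chi$ of prime conductor $q$ satisfies $\chi(p)=0$ iff $p=q$, conductors $q\le z$ are automatically excluded from $\mathcal{P}(X,\{\epsilon_p\},z)$, and for $q>z$ we have $\chi(p)\in\{1,\omega_3,\omega_3^2\}$ for every $p\le z$. The elementary orthogonality $\mathbf{1}_{\chi(p)=\epsilon_p}=\tfrac{1}{3}\sum_{j=0}^{2}\epsilon_p^{-j}\chi(p)^{j}$, multiplied across $p\le z$, yields
\[
\prod_{p\le z}\mathbf{1}_{\chi(p)=\epsilon_p}
=\frac{1}{3^{\pi(z)}}\sum_{\mathbf{j}\in\{0,1,2\}^{\pi(z)}}\overline{\epsilon}^{\,\mathbf{j}}\,\chi(m_{\mathbf{j}}),
\qquad m_{\mathbf{j}}:=\prod_{p\le z}p^{j_p},\ \overline{\epsilon}^{\,\mathbf{j}}:=\prod_{p\le z}\epsilon_p^{-j_p}.
\]
Both identities therefore reduce to estimating, for each $m=m_{\mathbf{j}}$, the weighted sum
\[
S(m,X):=\sum_{\substack{z<q\le X\\ q\equiv 1\bmod 3}}\log q\sum_{\substack{\chi\text{ prim.\ cubic}\\ \bmod\ q}}\chi(m),
\]
possibly twisted by an extra factor $\chi(n)$.

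For the first identity I would use the standard evaluation $\sum_{\chi}\chi(m)=3\cdot\mathbf{1}_{m\text{ is a cube mod }q}-1$ for $(m,q)=1$. When $\mathbf{j}=\mathbf{0}$ this gives $S(1,X)=2\sum_{q\equiv 1\bmod 3}\log q=X+O(X^{1/2}\log^{2}X)$ by the prime number theorem for arithmetic progressions under GRH, producing the main term $X/3^{\pi(z)}$. When $\mathbf{j}\ne\mathbf{0}$, the integer $m_{\mathbf{j}}$ is not a perfect cube (some $j_p\in\{1,2\}$), so $[K_{m_{\mathbf{j}}}:\Q]=6$, and the effective Chebotarev density theorem of Lagarias--Odlyzko under GRH for $\zeta_{K_{m_{\mathbf{j}}}}$ gives
\[
\sum_{\substack{q\le X\\ q\text{ splits comp.\ in }K_{m_{\mathbf{j}}}}}\log q=\frac{X}{6}+O\bigl(X^{1/2}\log^{2}(XP(z))\bigr),
\]
using the crude bound $\log\mathrm{disc}(K_{m_{\mathbf{j}}})\ll\log m_{\mathbf{j}}\ll\log P(z)$. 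The $-1$ and $+3$ contributions to $S(m_{\mathbf{j}},X)$ (amounting respectively to $-X/2$ and $+X/2$) thus cancel at main order, leaving $|S(m_{\mathbf{j}},X)|\ll X^{1/2}\log^{2}(XP(z))$. Since $|\overline{\epsilon}^{\,\mathbf{j}}|=1$, summing over the $3^{\pi(z)}$ values of $\mathbf{j}$ and dividing by $3^{\pi(z)}$ absorbs both factors and produces exactly the stated error.

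For the second identity I would first truncate $L(1,\chi)=\sum_{n\le X}\chi(n)/n+O(X^{-1/2}\log^{2}(qX))$ via Perron's formula under GRH; multiplying by $\log q$ and summing over $\chi\in\mathcal{P}(X,\{\epsilon_p\},z)$ contributes a total truncation error of $O(X^{1/2}\log^{3}X)$. Swapping summations, each $\mathbf{j}$-component becomes $\sum_{n\le X}\tfrac{1}{n}S(nm_{\mathbf{j}},X)$, and the main contribution comes from those $n$ for which $nm_{\mathbf{j}}$ is a cube in $\Z$. Setting $m'_{\mathbf{j}}:=\prod_{p\le z}p^{(-j_p)\bmod 3}$ (so that $m_{\mathbf{j}}m'_{\mathbf{j}}$ is a perfect cube), these are exactly $n=m'_{\mathbf{j}}s^{3}$ for $s\ge 1$, and $\sum_{s\ge 1}1/(m'_{\mathbf{j}}s^{3})=\zeta(3)/m'_{\mathbf{j}}$. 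A prime-by-prime factorization then gives
\[
\sum_{\mathbf{j}}\frac{\overline{\epsilon}^{\,\mathbf{j}}}{m'_{\mathbf{j}}}=\prod_{p\le z}\left(1+\frac{\epsilon_p^{-2}}{p}+\frac{\epsilon_p^{-1}}{p^{2}}\right),
\]
since the local factor at $p$ collects $1$ (from $j_p=0$), $\epsilon_p^{-1}/p^{2}$ (from $j_p=1$) and $\epsilon_p^{-2}/p$ (from $j_p=2$); this reproduces the announced main term. The non-cube values of $nm_{\mathbf{j}}$ contribute $O(X^{1/2}\log^{2}(XnP(z)))$ per pair via the same Chebotarev estimate, and the harmonic sum $\sum_{n\le X}1/n\ll\log X$ introduces one extra logarithm, giving the overall error $O(X^{1/2}\log^{3}(XP(z)))$.

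The main technical obstacle is keeping the Chebotarev errors from absorbing a factor of $3^{\pi(z)}$ that would overwhelm the $1/3^{\pi(z)}$ normalization. The crucial fact is that the bound $\log\mathrm{disc}(K_{m_{\mathbf{j}}})\ll\log P(z)$ holds \emph{uniformly} in $\mathbf{j}$, and together with $|\overline{\epsilon}^{\,\mathbf{j}}|=1$ this means the triangle inequality over the $3^{\pi(z)}$ extensions yields a single error of the claimed size once we divide by $3^{\pi(z)}$. A secondary, routine check is that truncating the cube series at $s^{3}m'_{\mathbf{j}}\le X$ produces a tail contribution of only $O((X/m'_{\mathbf{j}})^{1/3})$, which is harmless compared to the stated error.
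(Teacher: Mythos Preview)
Your proposal is correct and follows essentially the same strategy as the paper: an orthogonality detector over $m\mid P(z)$, GRH-effective Chebotarev for the non-cube terms, and a truncation of $L(1,\chi)$ followed by isolating the $nm=\cube$ contribution. The only cosmetic differences are that the paper phrases everything over $\Z[\omega_3]$ via the Hecke characters $\mathfrak q\mapsto\left(\frac{m}{\mathfrak q}\right)_3$ (using Lemma~\ref{lemma-BY}) rather than over $\Q$ via the Kummer extensions $K_m=\Q(\omega_3,m^{1/3})$, and uses the elementary P\'olya--Vinogradov truncation $L(1,\chi)=\sum_{n\le N}\chi(n)/n+O(q/N)$ with $N=X^{3/2}P(z)^{3/2}$ instead of your GRH-based tail bound; both choices lead to the same error terms.
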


	\begin{proof}
	By Lemma \ref{BY-1to1-cubic-chars}, studying $\mathcal{P}(X, \{ \epsilon_p \},z)$ is equivalent to studying $\mathcal{P}_{\Z[\omega_3]}(X, \{\epsilon_p\},z)$ where 
	\begin{align*}
	& \mathcal{P}_{\Z[\omega_3]}(X, \{\epsilon_p\},z)\\
	&=\left\{\mathfrak{q}\in \Z[\omega_3]\setminus \Z :  \mathfrak{q}\equiv 1\mod{3}, \mathfrak{q} \text{ is prime, } N(\mathfrak{q})\le X \text{ and }\left( \frac{p}{\mathfrak{q}} \right)_3=\epsilon_p  \text{ for each } p \leq z\right\}.
	\end{align*}
	
Then we have 	
\begin{align}\label{eq:gettingridofq=2}
&\#\mathcal{P}_{\Z[\omega_3]}(X, \{\epsilon_p\},z)	\nonumber \\&=\# \left\{\mathfrak{q}\in \Z[\omega_3] :  \mathfrak{q}\equiv 1\mod{3}, \mathfrak{q} \text{ is prime, } N(\mathfrak{q})\le X \text{ and }\left( \frac{p}{\mathfrak{q}} \right)_3=\epsilon_p  \text{ for each } p \leq z\right\} \nonumber\\
	&+ O \left( \# \left\{q\in \Z_{>0} :  q\equiv 2 \mod{3}, q \text{ is prime, } N(q)\le X \right\} \right)\nonumber \\
	&=\#\left\{\mathfrak{q}\in \Z[\omega_3] :  \mathfrak{q}\equiv 1\mod{3}, \mathfrak{q} \text{ is prime, } N(\mathfrak{q})\le X \text{ and }\left( \frac{p}{\mathfrak{q}} \right)_3=\epsilon_p  \text{ for each } p \leq z\right\} \nonumber \\& + O \left( X^{\frac{1}{2}} \right).
	 \end{align}

	We first write a characteristic function for $\mathcal{P}_{\Z[\omega_3]}(X, \{ \epsilon_p \},z)$.  For a rational $m \mid P(z)$, we define $\epsilon_m = \prod_{p \mid m} \epsilon_p^{v_p(m)}.$ Then,
	$$
	\frac{1}{3^{\pi(z)}} \sum_{m \mid P(z)} \epsilon_m^{-1}\left( \frac{m}{\mathfrak{q}} \right)_3  = \begin{cases} 1 & \text{if $\mathfrak{q} \in \mathcal{P}_{\Z[\omega_3]}(X, \{ \epsilon_p \},z)$}, \\ 0 & \text{otherwise.} \end{cases}
	$$

	We write 
	\begin{align*} 
	\sum_{\chi \in \mathcal{P}(X, \{ \epsilon_p \},z)} \log \mathrm{cond}(\chi) =&\sum_{\mathfrak{q} \in \mathcal{P}_{\Z[\omega_3]}(X, \{ \epsilon_p \},z)} \log N(\mathfrak{q})\nonumber = \frac{1}{3^{\pi(z)}}      
	\sum_{m \mid P(z)} \epsilon_m^{-1}
	\sum_{\substack{\mathfrak{q}\in\Z[\omega_3] \setminus \Z\\ N(\mathfrak{q}) \leq X\\ \mathfrak{q} \equiv 1 \mod{3}\\ \mathfrak{q} \text{ is prime}}} \left( \frac{m}{\mathfrak{q}} \right)_3  \log{N(\mathfrak{q})} \nonumber\\
	=&\frac{1}{3^{\pi(z)}}      
	\sum_{m \mid P(z)} \epsilon_m^{-1}
	\sum_{\substack{\mathfrak{q}\in\Z[\omega_3]  \\ N(\mathfrak{q}) \leq X\\ \mathfrak{q} \equiv 1 \mod{3}\\ \mathfrak{q} \text{ is prime}}} \left( \frac{m}{\mathfrak{q}} \right)_3  \log{N(\mathfrak{q})} +O(X^{\frac{1}{2}}),
	\end{align*}
	by \eqref{eq:gettingridofq=2}. The main term comes from $m=1$ and, by the Prime Number Theorem, it equals $\frac{X}{ 3^{\pi(z)}}$ with an error term of $O(X^{\frac{1}{2}}\log X)$ if we assume GRH. When $m \neq 1$, then $\left( \frac{m}{\cdot } \right)_3$ is a non-trivial character modulo at most  $9m$ since $m$ is not a cube, and using the Chebotarev density theorem under GRH, we have that the sum over $\mathfrak{q}$ is $O(X^{\frac{1}{2}} \log^2(XP(z)^2))$. This gives the first statement.   

For the second result, first note that  
\begin{equation}\label{L-func}
	L(1, \chi) = \sum_{n \leq N} \frac{\chi(n)}{n} +  \sum_{n > N} \frac{\chi(n)}{n} =  \sum_{n \leq N} \frac{\chi(n)}{n} 
	+ O \left( \frac{\mathrm{cond}(\chi)}{N} \right),
\end{equation}
where 
\begin{align} \label{choice-of-N} N:= X^{3/2}P(z)^{3/2} .\end{align}
	Thus we get 
	\begin{align}
	\sum_{\chi \in \mathcal{P}(X, \{ \epsilon_p \},z)} L(1, \chi) \log \mathrm{cond}(\chi) &= \sum_{\chi \in \mathcal{P}(X, \{ \epsilon_p \},z)}\log \mathrm{cond}(\chi) \left(\sum_{n \leq N} \frac{\chi(n)}{n} 
	+ O \left( \frac{\mathrm{cond}(\chi)}{N} \right)\right)\nonumber\\
	& =\frac{1}{3^{\pi(z)}}      
	\sum_{m \mid P(z)} \epsilon_m^{-1}\sum_{n\leq N}\frac{1}{n}
	\sum_{\substack{\mathfrak{q}\in\Z[\omega_3] \\ N(\mathfrak{q}) \leq X\\ \mathfrak{q} \equiv 1 \mod{3}\\ \mathfrak{q} \text{ is prime}}} \left( \frac{n m}{\mathfrak{q}} \right)_3  \log{N(\mathfrak{q})}+O\left(\frac{X^2 \log X}{N}\right). \label{eq:boundL}
	\end{align}
	The character $\left( \frac{nm}{\mathfrak{q}} \right)_3$ is principal only when $nm=\cube$. 
	In this case, applying the Prime Number Theorem (assuming RH over $\Z[\omega_3]$), we get that
\begin{align*}
\sum_{\substack{\mathfrak{q} \in \Z[\omega_3]\\N(\mathfrak{q}) \leq X\\ \mathfrak{q}\equiv 1 \mod{3}\\ \mathfrak{q} \text{ is prime}}} \left( \frac{n m }{ \mathfrak{q} } \right)_3\log N(\mathfrak{q}) &=
\sum_{\substack{\mathfrak{q} \in \Z[\omega_3]\\N(\mathfrak{q}) \leq X\\ \mathfrak{q}\equiv 1 \mod{3}\\ \mathfrak{q} \text{ is prime}}} \log N(\mathfrak{q})  + O \Bigg( \sum_{\substack{\mathfrak{q} \mid n m \\ \mathfrak{q}\equiv 1 \mod{3}\\ \mathfrak{q} \text{ is prime}}} 1 \Bigg)\\
&=X + O \left( X^{\frac{1}{2}} \log^2{X} + \log{ ( N P(z))} \right) \\
&= X + O \left( X^{\frac{1}{2}} \log^2(XP(z)) \right).
\end{align*}

Replacing the above in  \eqref{eq:boundL}, we have 
\begin{align} \label{PNT-ET}  \left( X + O \left( X^{\frac{1}{2}} \log^2(XP(z)) \right) \right)
\frac{1}{3^{\pi(z)}}      
	\sum_{m \mid P(z)} \epsilon_m^{-1}
	\sum_{\substack{n \leq N\\ n m = \tinycube }}\frac{1}{n}.\end{align}

	We write $n=n_1 n_2^2 n_3^3$, where $n_1, n_2$ are square-free and coprime, and $m=m_1 m_2^2$, where $m_1, m_2$ are square-free and coprime.
Then, $m_1, m_2\mid \widehat{P}(z)$, where $\widehat{P}(z):=\prod_{\substack{p\leq z}}p$.
The only possibility leading to  $nm = \cube$ is to have $n_1=m_2, n_2=m_1$, and we get
\begin{align*}
\frac{1}{3^{\pi(z)}}      
	\sum_{m \mid P(z)} \epsilon_m^{-1}
	\sum_{\substack{n \leq N\\ n m = \tinycube }}\frac{1}{n}
&=\frac{1}{3^{\pi(z)}}  \sum_{n_3 \leq N^{\frac{1}{3}}} \frac{1}{n_3^3} \sum_{\substack{n_2 \leq (N/n_3^3)^{\frac{1}{2}} \\ n_2 \mid \widehat{P}(z)}} \frac{\epsilon_{n_2}^{-1}}{n_2^2} 
 \sum_{\substack{n_1 \leq N/n_2^2 n_3^3 \\ (n_2, n_1) = 1 \\ n_1 \mid \widehat{P}(z)}} \frac{\epsilon_{n_1}^{-2}}{n_1}.  
 \end{align*}
 The condition $\widehat{P}(z) \leq N/n_2^3 n_3^3$ is equivalent to $n_2^2 n_3^3 \leq N/\widehat{P}(z) = X^{3/2} P(z)$ with the choice of $N$ given by \eqref{choice-of-N}. In this case, we have no condition on the size of $n_1$ in the inside sum, and we compute the above sum as
\begin{align*}
&= \frac{1}{3^{\pi(z)}} \prod_{p \leq z} \left( 1 + \frac{\epsilon_p^{-2}}{p} \right) \sum_{n_3 \leq N^{\frac{1}{3}}} \frac{1}{n_3^3} \sum_{\substack{n_2 \leq {(N/n_3^3)^{\frac{1}{2}}} \\ n_2 \mid \widehat{P}(z)}} \frac{\epsilon_{n_2}^{-1} }{n_2^2} \prod_{\substack{p \leq z \\ p \mid n_2}}
 \left( 1 + \frac{\epsilon_p^{-2}}{p} \right)^{-1}.
\end{align*}
When  $n_2^2 n_3^3 \geq N/\widehat{P}(z) = X^{3/2} P(z)$, we get the bound
\begin{align*}
&\ll   \frac{\log z}{3^{\pi(z)}} \sum_{n_3 \leq N^{\frac{1}{3}}} \frac{1}{n_3^3} \sum_{\substack{\left( \frac{X^{3/2} P(z)}{n_3^3} \right)^{\frac{1}{2}} \leq n_2 \leq {(N/n_3^3)^{\frac{1}{2}}} }}\frac{1}{n_2^2} \\
& \ll  \frac{\log z}{3^{\pi(z)}} \sum_{n_3 \leq N^{\frac{1}{3}}} \frac{1}{n_3^3} \left( \frac{n_3^{3/2}}{X^{3/4} P(z)^{\frac{1}{2}}} \right) \ll \frac{\log{z} \,X^{-\frac{3}{4}}}{3^{\pi(z)} P(z)^{\frac{1}{2}}},
 \end{align*}
which is $O(X^{-\frac{1}{2}})$. Working similarly for the $n_2$-sum, separating the range for $n_3^{3/2} \leq N/\widehat{P}(z) = X^{\frac{3}{2}} P(z)$, we 
get
\begin{align*}
\frac{1}{3^{\pi(z)}}      
	\sum_{m \mid P(z)} \epsilon_m^{-1}
	\sum_{\substack{n \leq N\\ n m = \tinycube }}\frac{1}{n} &=
 \frac{1}{3^{\pi(z)}} \prod_{p \leq z} \left( 1 + \frac{\epsilon_p^{-2}}{p} \right) \sum_{n_3 \leq N^{\frac{1}{3}}} \frac{1}{n_3^3} \sum_{\substack{n_2 \mid \widehat{P}(z)}} \frac{\epsilon_{n_2}^{-1} }{n_2^2} \prod_{\substack{p \leq z \\ p \mid n_2}}
 \left( 1 + \frac{\epsilon_p^{-2}}{p} \right)^{-1}  + O \left( X^{-1/2} \right) \\
 &= \frac{1}{3^{\pi(z)}} \prod_{p \leq z} \left(1+\frac{\epsilon_p^{-2}}{p} +\frac{\epsilon_p^{-1}}{p^2}\right) \left( \zeta(3) + O\left(\frac{1}{N^{\frac{2}{3}}} \right) \right)+ O \left(X^{-1/2} \right) \\
 &= \frac{\zeta(3)}{3^{\pi(z)}} \prod_{p \leq z} \left(1+\frac{\epsilon_p^{-2}}{p}+\frac{\epsilon_p^{-1}}{p^2}\right) + O \left(X^{-1/2} \right). 
 \end{align*}
Replacing in \eqref{PNT-ET}, we have
\begin{align*}
 \left( X + O \left( X^{\frac{1}{2}} \log^2(XP(z)) \right) \right)
\frac{1}{3^{\pi(z)}}      
	\sum_{m \mid P(z)} \epsilon_m^{-1}
	\sum_{\substack{n \leq N\\ n m = \tinycube }}\frac{1}{n} =& \frac{X}{3^{\pi(z)}} \zeta(3) \prod_{p \leq z} \left(1+\frac{\epsilon_p^{-2}}{p}+\frac{\epsilon_p^{-1}}{p^2}\right) \\& + O \left( X^{\frac{1}{2}} \log^2(XP(z)) \right).
\end{align*}

When $nm \neq \cube$ then $\left(\frac{n m}{\cdot} \right)_3$  is a non-trivial character of modulo at most $9n m$  and using again the GRH, we have that the sum over $\mathfrak{q}$ in \eqref{eq:boundL} is $O(X^{\frac{1}{2}} \log^2 (XN^2P(z)^2)).$ 
Therefore the non-cubic contribution can be bounded by
\[
\ll X^{\frac{1}{2}} \log^2 (XNP(z)) \frac{1}{3^{\pi(z)}}      
\sum_{m \mid P(z)} \epsilon_m^{-1}\sum_{n\leq N}\frac{1}{n}\ll X^{\frac{1}{2}} \log^2 (XNP(z)) \log N \ll X^{\frac{1}{2}} \log^3 (XP(z)).
\]
 This concludes the proof.
	\end{proof}

Our next goal is to prove a result  analogous to Proposition \ref{main prop for omega result} that can be applied to obtain a lower bound. Since in this case we are interested in  characters of order $\ell$ with $\ell$ prime, we need to consider this more general setting. First, we need to generalize the ideas exposed in Lemma \ref{lemma-BY}.

The characters of order $\ell$ are supported on primes $q \equiv 1 \mod{\ell}$, which are those that split completely in $\Q(\omega_\ell)$, say $q\Z[\omega_\ell]=\mathfrak{q}_1\cdots  \mathfrak{q}_{\ell-1}$. For $n \in \Z$, we have that
\[\left \{ \chi\, :\, \text{ $\chi$ is a primitive character of conductor $q$}\right\}=\left \{\left( \frac{\cdot}{\mathfrak{q}_1}\right)_\ell, \dots, \left( \frac{\cdot}{\mathfrak{q}_{\ell-1}}\right)_\ell\right\},\]
where the $\ell$th residue symbol is defined for $q\nmid n$ by 
\[\left( \frac{n}{\mathfrak{q}}\right)_\ell=\omega_\ell^k, \qquad \mbox{ where }\qquad
n^{\frac{q-1}{\ell}}\equiv \omega_\ell^k \mod{\mathfrak{q}}.\]

\begin{prop}\label{main prop for small values}
Assume GRH for Hecke $L$-functions over $\Q(\omega_{\ell})$.  Let $P_\ell(z)=\prod_{p\leq z} p^{\ell-1}=e^{(\ell-1)z+o(z)}$. Let $\epsilon_p \in\{\omega_\ell^k\, :\, k=1,\dots,\ell\}$ for each rational prime $p\leq z$, and let
	$$\mathcal{P}_\ell(X, \{ \epsilon_p \},z)=\{\chi \in \mathcal{F}_\ell(X) \,|\, \mathrm{cond}(\chi) \text{ is prime and } \chi(p)=\epsilon_p  \text{ for each } p \leq z\}.$$
 Then for $z\ll X^{\frac{1}{6}}$, we have
	\begin{align*}
	\sum_{\chi \in \mathcal{P}(X, \{ \epsilon_p \},z)} \frac{1}{L(1, \chi)} \log  \mathrm{cond}(\chi) = \frac{X}{\ell^{\pi(z)}} \prod_{\substack{p\leq z}}\left(1-\frac{\epsilon_p^{-(\ell-1)}}{p}\right)+O(X^{\frac{1}{2}}\log^3 (XP_\ell(z))).
	\end{align*} 
	\end{prop}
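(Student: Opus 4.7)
The plan is to mirror the proof of Proposition \ref{main prop for omega result}, with two key adaptations. First, I would replace the truncation \eqref{L-func} of $L(1,\chi)$ by the analogous truncation of $1/L(1,\chi)=\sum_n \mu(n)\chi(n)/n$. Under GRH, the standard bound $M(x,\chi):=\sum_{n\le x}\mu(n)\chi(n)\ll x^{1/2+\epsilon}q^{\epsilon}$ together with partial summation gives
$$\frac{1}{L(1,\chi)}=\sum_{n\le N}\frac{\mu(n)\chi(n)}{n}+O\bigl(N^{-1/2+\epsilon}\mathrm{cond}(\chi)^{\epsilon}\bigr),$$
so that choosing $N=X^{3/2}P_\ell(z)^{3/2}$ as in \eqref{choice-of-N} makes the total truncation error negligible after weighting by $\log\mathrm{cond}(\chi)$ and summing over $\mathcal{P}_\ell(X,\{\epsilon_p\},z)$.

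Next, I would detect the condition $\chi(p)=\epsilon_p$ for $p\le z$ via the $\ell$-adic orthogonality
$$\frac{1}{\ell^{\pi(z)}}\sum_{m\mid P_\ell(z)}\epsilon_m^{-1}\chi(m)=\begin{cases}1 & \text{if }\chi(p)=\epsilon_p\text{ for all }p\le z,\\ 0 & \text{otherwise},\end{cases}$$
where $\epsilon_m:=\prod_{p\mid m}\epsilon_p^{v_p(m)}$, and pass to split primes $\mathfrak{q}$ of $\Z[\omega_\ell]$ via the natural generalization of Lemma \ref{BY-1to1-cubic-chars}, discarding the $O(X^{1/2})$ contribution of non-split primes as in \eqref{eq:gettingridofq=2}. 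Exchanging the order of summation isolates the inner sum $S(n,m):=\sum_{\mathfrak{q}}\left(\frac{nm}{\mathfrak{q}}\right)_\ell\log N(\mathfrak{q})$. When $nm$ is an $\ell$-th power, this is the PNT sum over $\Z[\omega_\ell]$, which under GRH equals $X+O(X^{1/2}\log^2(XP_\ell(z)))$. Otherwise $\left(\frac{nm}{\cdot}\right)_\ell$ is a non-trivial Hecke character of modulus at most $\ell nm$, and Chebotarev under GRH gives $S(n,m)\ll X^{1/2}\log^2(XNP_\ell(z))$; summing over $n\le N$ and $m\mid P_\ell(z)$ produces a non-principal contribution of $O(X^{1/2}\log^3(XP_\ell(z)))$.

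The main-term computation simplifies dramatically compared with Proposition \ref{main prop for omega result} because $\mu(n)\neq 0$ forces $n$ squarefree. Requiring $nm$ to be an $\ell$-th power with $n$ squarefree forces $v_p(m)=\ell-1$ for every $p\mid n$ and $v_p(m)=0$ for every $p\nmid n$; since $m\mid P_\ell(z)$, this means $n$ must be a squarefree divisor of $\widehat{P}(z)=\prod_{p\le z}p$ (so $n\le\widehat{P}(z)\ll N$ and the cutoff at $N$ is vacuous) with $m=n^{\ell-1}$. Hence $\epsilon_m^{-1}=\prod_{p\mid n}\epsilon_p^{-(\ell-1)}$, and the main term collapses to
$$\frac{X}{\ell^{\pi(z)}}\sum_{n\mid \widehat{P}(z)}\frac{\mu(n)\prod_{p\mid n}\epsilon_p^{-(\ell-1)}}{n}=\frac{X}{\ell^{\pi(z)}}\prod_{p\le z}\left(1-\frac{\epsilon_p^{-(\ell-1)}}{p}\right),$$
which is the stated main term. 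The principal obstacle is the first step: securing a uniform truncation of $1/L(1,\chi)$ with sufficiently small error, which is why GRH is invoked to control $M(x,\chi)$. Everything else is a direct generalization of the cubic-order arguments already in the paper.
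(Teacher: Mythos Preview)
Your proposal is correct and follows essentially the same approach as the paper's proof: truncate $1/L(1,\chi)$ via the GRH bound on $\sum_{n\le x}\mu(n)\chi(n)$, detect the conditions $\chi(p)=\epsilon_p$ by orthogonality over $m\mid P_\ell(z)$, pass to prime ideals in $\Z[\omega_\ell]$, separate the inner sum into $\ell$-th powers (PNT) and non-$\ell$-th powers (Chebotarev under GRH), and collapse the main term using that $n$ squarefree forces $m=n^{\ell-1}$ with $n\mid\widehat P(z)$. The only cosmetic difference is that the paper takes $N=XP_\ell(z)$ rather than your $N=X^{3/2}P_\ell(z)^{3/2}$, which is immaterial.
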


\begin{proof}

	As an application of Perron's formula, under GRH, it is known that for any $\epsilon>0$,
	 \[
	 \sum_{n\leq x}\mu(n)\chi(n)\ll x^{\frac{1}{2}+\epsilon}.
	 \]
	 Using this, for large $N$, we have
	 \begin{align*}
	 \frac{1}{L(1, \chi)}&=\sum_{n\leq N}\frac{\mu(n)\chi(n)}{n}+\int_{N}^{\infty}\left(\sum_{n\leq t}\mu(n)\chi(n)\right)\frac{dt}{t^2}\\
	 & =\sum_{n\leq N}\frac{\mu(n)\chi(n)}{n}+O\left(N^{-\frac{1}{2}+\epsilon}\right).
	 \end{align*}

By the discussion before the statement of Proposition \ref{main prop for small values},  studying  $\mathcal{P}_\ell(X, \{ \epsilon_p \},z)$ is equivalent to studying $\mathcal{P}_{\Z[\omega_\ell]}(X, \{\epsilon_p\},z)$ where 
	\begin{align}\label{eq:Pz2}& \mathcal{P}_{\Z[\omega_\ell]}(X, \{\epsilon_p\},z)=\left\{\mathfrak{q} \text{ ideal in } \Z[\omega_\ell] : N(\mathfrak{q})=q \text{ is prime, } q\le X \text{ and }\left( \frac{p}{\mathfrak{q}} \right)_\ell=\epsilon_p  \text{ for each } p \leq z\right\}.
	\end{align}
	
Then we have 	
\begin{align*}
\#\mathcal{P}_{\Z[\omega_\ell]}(X, \{\epsilon_p\},z)	&=\# \left\{\mathfrak{q} \text{ ideal in } \Z[\omega_\ell] : N(\mathfrak{q})\le X \text{ and }\left( \frac{p}{\mathfrak{q}} \right)_\ell=\epsilon_p  \text{ for each } p \leq z\right\} \\& + O \left( X^{\frac{1}{2}} \right).
	 \end{align*}
We will use the detector
	 $$
	\frac{1}{\ell^{\pi(z)}} \sum_{\substack{m \mid P_\ell(z)}} \epsilon_m^{-1}\left( \frac{m}{\mathfrak{q}}\right)_\ell= \begin{cases} 1 & \text{if $\mathfrak{q} \in \mathcal{P}_{\Z[\omega_\ell]}(X, \{ \epsilon_p \},z)$}, \\ 0 & \text{otherwise.} \end{cases}
	$$
	 
	 We apply \eqref{eq:Pz2} and $\frac{1}{|L(1, \chi)|}\ll \log_2 \mathrm{cond}(\chi)$ under GRH to get 
	 \begin{align} 
	 &\sum_{\chi \in \mathcal{P}_\ell(X, \{ \epsilon_p \},z)} \frac{1}{L(1, \chi)} \log \mathrm{cond}(\chi) = \sum_{\chi \in \mathcal{P}_\ell(X, \{ \epsilon_p \},z)}\log \mathrm{cond}(\chi) \left(\sum_{n \leq N} \frac{\mu(n)\chi(n)}{n} 
	 + O \left( N^{-\frac{1}{2}+\epsilon}\right)\right)\nonumber\\
	 & =\frac{1}{\ell^{\pi(z)}}      \sum_{m \mid P_\ell(z)} \epsilon_m^{-1}\sum_{n\leq N}\frac{\mu(n)}{n}
	 \sum_{\substack{\mathfrak{q} \text{ ideal in }\Z[\omega_\ell] \\\mathfrak{q} \text{ prime }\\  N(\mathfrak{q})  \leq X}} \left( \frac{n m}{\mathfrak{q}} \right)_\ell \log{N(\mathfrak{q})}+O\left(\frac{X \log X}{N^{\frac{1}{2}-\epsilon}}\right)+O\left(X^{\frac{1}{2}}\log X \log_2 X\right). \label{eq:boundLmin} 
	 \end{align}
 The map  $\mathfrak{q} \rightarrow \left( \frac{nm}{\mathfrak{q}} \right)_\ell$ is a Hecke character over 	 $\Q(\omega_\ell)$ that becomes principal when $nm$ in an $\ell$th power. In this case, applying the Prime Number Theorem and assuming RH over $\Z[\omega_\ell]$, we get
	 \begin{align*}
	 \sum_{\substack{\mathfrak{q} \text{ ideal in }\Z[\omega_\ell] \\ \mathfrak{q} \text{ prime }\\  N(\mathfrak{q})  \leq X}}\left( \frac{n m }{ \mathfrak{q} } \right)_\ell\log N(\mathfrak{q}) &=\sum_{\substack{\mathfrak{q} \text{ ideal in }\Z[\omega_\ell] \\ \mathfrak{q} \text{ prime }\\  N(\mathfrak{q})  \leq X}}
	 \log N(\mathfrak{q})  + O \Bigg( 
	  \sum_{\substack{\mathfrak{q} \text{ ideal in }\Z[\omega_\ell] \\ N(\mathfrak{q}) \text{ prime }\\  N(\mathfrak{q})  \leq X\\ \mathfrak{q}\mid nm }}1 \Bigg)\\
	 &=X + O_\ell\left( X^{\frac{1}{2}} \log^2{X} + \log{ ( N P_\ell(z))} \right) \\
	 &= X + O_\ell \left( X^{\frac{1}{2}} \log^2(XP_\ell(z)) \right),
	 \end{align*}where we take $N=XP_\ell(z)$ and we use the assumption that $z\ll X^{\frac{1}{6}}$. 
	 Replacing in  \eqref{eq:boundLmin}, we have 
	 \begin{align} \label{PNT-ETmin}  \left( X + O_\ell \left( X^{\frac{1}{2}} \log^2(XP_\ell(z)) \right) \right)
	 \frac{1}{\ell^{\pi(z)}}      
	 \sum_{m \mid P_\ell(z)} \epsilon_m^{-1}
	 \sum_{\substack{n \leq N\\ n m = \ell\text{th power}  }}\frac{\mu(n)}{n}.\end{align}
	 We write $m=m_1 m_2^2\cdots m_{\ell-1}^{\ell-1}$, where the  $m_1, \dots, m_{\ell-1}$ are square-free and pairwise coprime and $m_1, \dots, m_{\ell-1}\mid \widehat{P}(z)$, where we recall that  $\widehat{P}(z)=\prod_{\substack{p\leq z}}p$.
	 Since $n$ is square-free, then the only possibility for $nm$ to be an $\ell$th power is to have $m_{\ell-1}=n$ and $m_k=1$ for $k=1,\dots,\ell-2$. Therefore \eqref{PNT-ETmin} is equal to 
	 \begin{align*}
	 &\left( X + O_\ell \left( X^{\frac{1}{2}} \log^2(XP_\ell(z)) \right) \right)\frac{1}{\ell^{\pi(z)}}\sum_{\substack{n\leq N\\ n\mid \widehat{P}(z)}}\frac{\mu(n)\epsilon_n^{-(\ell-1)}}{n}\\
	 &=\frac{\left( X + O_\ell \left( X^{\frac{1}{2}} \log^2(XP_\ell(z)) \right)\right)}{\ell^{\pi(z)}}\prod_{p\leq z}\left(1-\frac{\epsilon_p^{-(\ell-1)}}{p}\right),
	 \end{align*}
	 where the last equality follows from the choice of $N$.

	 The map  $\mathfrak{q} \rightarrow \left( \frac{nm}{\mathfrak{q}} \right)_\ell$ is a non-trivial Hecke character over $\Q(\omega_\ell)$ of conductor a multiple of $(nm)$ when 
	 $nm$ is not an $\ell$th power in $\Z$, since $n$ is square-free and $m$ is $\ell$-power free, which follows from the fact that $m \mid P_\ell(z)$.  Using the GRH again, we have that the sum over $\mathfrak{q}$ in \eqref{eq:boundLmin} is $O_\ell (X^{\frac{1}{2}}\log^2(XNP_\ell(z)^2))$. Therefore, the terms when  $nm$ is not an $\ell$th power in $\Z$ contribute 
	 \[
\ll_\ell X^{\frac{1}{2}} \log^2 (XNP_\ell(z)) \frac{1}{\ell^{\pi(z)}}      
\sum_{m \mid P_\ell(z)} \epsilon_m^{-1}\sum_{n\leq N}\frac{1}{n}\ll_\ell X^{\frac{1}{2}} \log^2 (XNP_\ell(z)) \log N \ll X^{\frac{1}{2}} \log^3 (XP_\ell(z)).
\]
    This finishes the proof.
	\end{proof}
	
\begin{proof}[Proof of Eq.~\eqref{eq:maximum value under GRH} in Theorem \ref{maxmin value under GRH}]
	Let us consider the case $\epsilon_p=1$ for all primes $p\leq z$.
	 If $3^{\pi(z)}\leq X^{\frac{1}{2}-\epsilon}$ then using Proposition \ref{main prop for omega result}, we have
	 	\begin{align*}
	 \sum_{\chi \in \mathcal{P}(X, \{ 1 \},z)} \log \mathrm{cond}(\chi) = \frac{X}{ 3^{\pi(z)}}\left(1+O(X^{-\epsilon})\right),
	 \end{align*}
	 and
	 \begin{align*}
	 \sum_{\chi \in \mathcal{P}(X, \{ 1 \},z)} L(1, \chi) \log  \mathrm{cond}(\chi) = \frac{X}{3^{\pi(z)}} \zeta(3) \prod_{\substack{p\leq z}}\left(1+\frac{1}{p}+\frac{1}{p^2}\right)\left(1+O( X^{-\epsilon})\right).
	 \end{align*}
	 Applying the triangular inequality, we have
	 \begin{align*}
	 \sum_{\chi \in \mathcal{P}(X, \{ 1 \},z)} |L(1, \chi)| \log  \mathrm{cond}(\chi)& \geq \bigg|\sum_{\chi \in \mathcal{P}(X, \{ 1 \},z)} L(1, \chi) \log  \mathrm{cond}(\chi)\bigg|\\
	 & = \frac{X}{3^{\pi(z)}} \zeta(3) \prod_{\substack{p\leq z}}\left(1+\frac{1}{p}+\frac{1}{p^2}\right)\left(1+O(X^{-\epsilon})\right).
	 \end{align*}
We take $z:= \frac{\log X \log_2 X}{2\log 3}$ in order to have  $3^{\pi(z)}\leq  X^{\frac{1}{2}-\epsilon}$.   For convenience of notation, we denote 
\[\alpha_{\epsilon_p}:=	\zeta(3) \prod_{\substack{p\leq z}}\left(1+\frac{\epsilon_p}{p}+\frac{\epsilon_p}{p^2}\right).\]
Recall that from \eqref{L-func}, we also have that $|L(1, \chi)|\leq 2 \log  \mathrm{cond}(\chi)$ for $\mathrm{cond}(\chi)$ sufficiently large.
Let $\delta>0$ arbitrary, and assume that the proportion of $\chi \in \mathcal{P}(X, \{ \epsilon_p \},z)$ such that $|L(1, \chi)|\geq  \alpha_{\epsilon_p}-\delta$ is $\Delta_{\geq \alpha-\delta}$. Then we have 
\begin{align*}
\frac{X}{3^{\pi(z)}} \alpha_{\epsilon_p} \leq  & \sum_{\substack{\chi \in \mathcal{P}(X, \{ \epsilon_p \},z)\\|L(1, \chi)|<\alpha_{\epsilon_p}-\delta}} |L(1, \chi)| \log  \mathrm{cond}(\chi) + \sum_{\substack{\chi \in \mathcal{P}(X, \{ \epsilon_p \},z)\\|L(1, \chi)|\geq \alpha_{\epsilon_p}-\delta}} |L(1, \chi)| \log  \mathrm{cond}(\chi) \\
\leq & 
(1-\Delta_{\geq \alpha-\delta}) \frac{X}{3^{\pi(z)}}   (\alpha_{\epsilon_p}-\delta)  +\frac{X}{3^{\pi(z)}} 2 \log^2 X  \Delta_{\geq \alpha-\delta}
\end{align*}
and thus
\[\alpha_{\epsilon_p} \leq(1- \Delta_{\geq \alpha-\delta}) (\alpha_{\epsilon_p}-\delta)  +2 \log^2 X  \Delta_{\geq \alpha-\delta},\]
and therefore 
\[\frac{\delta}{2\log^2X-\alpha_{\epsilon_p}+\delta}\leq \Delta_{\geq \alpha-\delta}.\]
Multiplying $\Delta_{\geq \alpha-\delta}$ by the total number of elements in $\mathcal{P}(X, \{ \epsilon_p \},z)$, which is $X/3^{\pi(z)}\gg X^{\frac{1}{2}+\epsilon}$, we get that  there are $\gg X^{\frac{1}{2}}$  
	 cubic characters $\chi\in \mathcal{F}_3(X)$ with prime conductor such that 
	 \begin{align*}
	 |L(1, \chi)|&\geq \zeta(3) \prod_{\substack{p\leq z}}\left(1+\frac{1}{p}+\frac{1}{p^2}\right)= \zeta(3) \prod_{\substack{p\leq z}} \left(1-\frac{1}{p}\right)^{-1} \prod_{\substack{p\leq z}}\left(1-\frac{1}{p^3}\right)\\
	 &\geq e^{\gamma-O(1/z)} \log z \prod_{\substack{p> z}}\left(1-\frac{1}{p^3}\right)\geq e^{\gamma}\left(\log_2 X+\log_3 X-\log(2\log 3)+o(1)\right).
	 \end{align*}
	 \end{proof}

\begin{proof}[Proof of Theorem \ref{minimum value under GRH-gen}]
We fix the values  $\epsilon_p=\omega_\ell^\frac{\ell-1}{2}$ for all $p\leq z$.  Following the proof of 
Eq.~\eqref{eq:maximum value under GRH} in Theorem \ref{maxmin value under GRH}, if $\ell^{\pi(z)}\leq X^{\frac{1}{2}-\epsilon}$ then applying Proposition \ref{main prop for small values}, we have
\begin{align*}
\sum_{\chi \in \mathcal{P}_\ell(X, \{ \omega_\ell \},z)} 
\frac{1}{L(1, \chi)} \log  \mathrm{cond}(\chi) = \frac{X}{\ell^{\pi(z)}} \prod_{\substack{p\leq z}}\left(1-\frac{\omega_\ell^\frac{\ell-1}{2}}{p}\right)\left(1+O( X^{-\epsilon})\right).
\end{align*}
 Applying the triangular inequality, we have
\begin{align*}
\sum_{\chi \in \mathcal{P}_\ell(X, \{ \omega_\ell \},z)} \frac{1}{|L(1, \chi)|} \log  \mathrm{cond}(\chi)& \geq \bigg|\sum_{\chi \in \mathcal{P}_\ell(X, \{ \omega_\ell \},z)} \frac{1}{L(1, \chi)} \log  \mathrm{cond}(\chi)\bigg|\\
& = \frac{X}{\ell^{\pi(z)}}\prod_{\substack{p\leq z}}\left|\left(1-\frac{\omega_\ell^\frac{\ell-1}{2}}{p}\right)\right|\left(1+O(X^{-\epsilon})\right).
\end{align*}
Following the choice of $z:= \frac{\log X \log_2 X}{2\log \ell}$ and similar arguments as in the proof of 
Eq.~\eqref{eq:maximum value under GRH} in  Theorem \ref{maxmin value under GRH}, we conclude that
there are $\gg X^{\frac{1}{2}}$ $\ell$th  power characters $\chi\in \mathcal{F}_\ell(X)$ with prime conductor such that
\begin{align*}
\frac{1}{|L(1, \chi)|}\geq \prod_{p\leq z}\left|\left(1-\frac{\omega_\ell^\frac{\ell-1}{2}}{p}\right)\right| =&\prod_{p\leq z}\left(1+\frac{2\cos\left(\frac{\pi}{\ell}\right)}{p}+\frac{1}{p^2}\right)^{\frac{1}{2}}
=  \frac{(e^\gamma \log z)^{\cos\left(\frac{\pi}{\ell}\right)}}{C_\ell \left(1+O\left(\frac{\log z}{\sqrt{z}}\right)\right)}\\
=& \frac{(e^\gamma (\log_2X+\log_3X-\log(2\log \ell)))^{\cos\left(\frac{\pi}{\ell}\right)}}{C_\ell \left(1+O\left(\frac{\log_2X}{\sqrt{\log X}}\right)\right)},
\end{align*}
where we have employed the same simplifications as in the proof of Proposition \ref{thm:littlewood-gen}.

\end{proof}

\section{Strengthening the range in the distribution result via the short Euler product under GRH} \label{sec:1.4}
Our goal in this section is to obtain the distribution result  Theorem \ref{main distribution result under GRH}, by strengthening the range of $\tau$ in Theorem \ref{theorem-relating-to-RV} under GRH. We obtain a range similar to the $\Omega$-result under GRH (as the term $\log_2 X+\log_3 X$ is achieved).
We remark that Theorem \ref{Asympformula} plays an integral role in the proof of  Theorem \ref{theorem-relating-to-RV}.  Under GRH, we can extend the range of $|z|$ in Theorem \ref{Asympformula} by approximating $L(1, \chi)$ with a short Euler product, defined as  \[L(s, \chi; y):=\prod_{p\leq y} \left(1-\frac{\chi(p)}{p^s} \right)^{-1}.\]
 We also define the short random Euler product
 $$L(1, \X; y) = \prod_{p \leq y} \left( 1 - \frac{\X(p)}{p} \right)^{-1}.$$
  \begin{thm}\label{asymp moment under GRH} Assume GRH for $L(s,\chi)$ over $\Q(\omega_3)$.
Let $C>0$, $e^{40} \leq B \leq (\log_2 X)^{C}$, $z$ be a real number and $y:= B\log X \log_2 X$. Then, uniformly in the region $|z|\leq \frac{\log X \log_2 X}{e^{37} \log B}$, we have when $X$ goes to infinity,
\begin{align*}
\frac{1}{ |\mathcal{F}_3(X)|}\sum_{\chi \in \mathcal{F}_3(X)} |L(1, \chi; y)|^{2z} = \mathbb{E}(|L(1,\mathbb{X};y)|^{2z})+O\left( X^{-\frac{23}{100}} \mathbb{E}(|L(1,\mathbb{X};y)|^{2z})\right).
\end{align*} 
 	\end{thm} 
	Results of the same quality were obtained for quadratic characters in \cite{GS}.

We start with the some auxiliary results. 
The following lemma relies on upper bounds for $|\log L(1, \chi_d)|$ proven in \cite{GS-JAMS} 
for quadratic characters, and extended by Lamzouri \cite{Lam2010}  to general $L$-functions. 
For the case of quadratic characters, this is analogous to \cite[Lemma 4.5]{GS} replacing $\sigma =\frac{3}{4}$ with $\sigma=\frac{2}{3}$.

\begin{lem}\label{character sum under GRH} Assume GRH for Hecke $L$-functions over $\Q(\omega_3)$.
Let $m\in \mathbb{Z}$ that is not a cube. Then  we have
\[	\sum_{\chi\in \mathcal{F}_3(X)}\chi(m)\ll X^{\frac23 +\epsilon}\exp\left((\log m)^{\frac23-\epsilon}\right).\]
	\end{lem}
\begin{proof}
	Recall from \eqref{eq:sieve} that
	\begin{align}\label{15/09/2022}
		&\sum_{\chi \in \cF_3(X)} \chi(m)  = \sum_{\substack{d \in \Z, d  \equiv 1 \mod 3 \\ |d| \leq \sqrt{X}}} \mu_\Z(d)  \chi_d(m) \sum_{\substack{\ell \in \Z[\omega_3], \ell \equiv 1 \mod 3\\(\ell,d)=1\\ N(\ell) \leq \sqrt{X/N(d)}}}  \mu_{\Z[\omega_3]} (\ell) \chi_{\ell^2}(m)
		\sum_{\substack{ n \in \Z[\omega_3], n \equiv 1 \mod 3 \\ (n,d)=1 \\ N(n) \leq X/N(d \ell^2)}} \chi_n(m).
	\end{align}
The function $\psi_m: (n) \mapsto \chi_n(m) = \left( \frac{m}{n} \right)_3$ defined on ideals $(n) \subset \Z[\omega_3]$ (coprime to 3, and where $m \equiv 1 \mod 3$) gives a Hecke character of modulus $9m$. Let
\begin{align*}
L(s, \psi_m) = \sum_{(n)} \psi_m((n)) N(n)^{-s} = \sum_{\substack{n \in \Z[\omega_3]\\n \equiv 1 \mod 3}} \chi_n(m) N(n)^{-s}
\end{align*}
be the corresponding Hecke $L$-function.

Using Perron's summation formula  we have, for some $T\geq 1$,
\begin{align}\label{eq:Perron}
\sum_{\substack{ n \in \Z[\omega_3], n \equiv 1 \mod 3 \\ (n,d)=1 \\ N(n) \leq Z}} \chi_n(m)=\frac{1}{2\pi i}\int_{(1+\epsilon)}f_{m, d}(s)Z^s\frac{ds}{s}=\frac{1}{2\pi i}\int_{1+\epsilon-iT}^{1+\epsilon+iT}f_{m, d}(s)Z^s\frac{ds}{s}+O\left(\frac{Z^{1+\epsilon}}{T}\right),
\end{align}
where $Z:=\frac{X}{N(d\ell^2)}$ and \[
f_{m, d}(s)=\sum_{\substack{n \in \Z[\omega_3], n \equiv 1 \mod 3 \\ (n,d)=1}}\frac{\chi_m(n)}{N(n)^s}=\prod_{\mathfrak{p}\mid d}\left(1-\frac{\psi_m(\mathfrak{p})}{N(\mathfrak{p})^s}\right)L(s,\psi_m).
\]
A special case of Lemmas 4.2 and 4.3 in \cite{Lam2010}  implies that if $\sigma_1=\min\left(\sigma_0+\frac{1}{\log y}, \frac{\sigma+\sigma_0}{2}\right)$,
where $y \geq 2$, $|t| \geq y+3$ and $\sigma>\sigma_0\geq \frac12$, then
\[
\log L(\sigma+it, \psi_m)=\sum_{N(n)\leq y}\frac{b_n}{N(n)^{\sigma+it}}+O\left(\frac{\log |t|}{(\sigma_1-\sigma_0)^2}y^{\sigma_1-\sigma}\right),
\]
where $$b_n = \begin{cases} \frac{\psi_m((n))}{k} & \mbox{if }(n) = \fp^k, \\ 0 & \text{otherwise.} \end{cases}$$
Then, for $|t| \leq T$ and $y=(\log T)^{\beta}$, we have
\begin{align*}
|\log L(\sigma+it, \psi_m)|
&\ll y^{1-\sigma}+\frac{\log T}{(\sigma_1-\sigma_0)^2}y^{\sigma_1-\sigma}\\
&\ll (\log T)^{\beta(1-\sigma)}+ (\log T)^{1+\beta(\sigma_0-\sigma)}\log_2 T.
\end{align*}
Inserting this estimate after shifting the line of integration in the above Perron's integral \eqref{eq:Perron} to $\re(s)=\sigma>\sigma_0\geq \frac{1}{2}$, we obtain 
\[
\sum_{\substack{ n \in \Z[\omega_3], n \equiv 1 \mod 3 \\ (n,d)=1 \\ N(n) \leq Z}} \chi_n(m)\ll Z^{\sigma}2^{2\omega(d)}\exp\left((\log T)^{\beta(1-\sigma)}+ (\log T)^{1+\beta(\sigma_0-\sigma)}\log_2 T\right)+\frac{Z^{1+\epsilon}}{T}.
\]
We may suppose that $Z\leq N(m)$, since by periodicity $\sum_{N(n)\leq Z}\chi_m(n)\ll N(m)$. This implies that $m\geq Z^{\frac{1}{2}}$ and hence we can take $T:=m$, since in this case $\frac{Z^{1+\epsilon}}{T}\ll Z^{\frac{1}{2}+\epsilon}$, which is permissible. Also equating $\beta(1-\sigma)$ and $1+\beta(\sigma_0-\sigma)$, we can choose $\sigma_0=\frac{\beta-1}{\beta}$ with $\beta\geq 2$. 
Therefore, taking  $\sigma= \frac{2}{3}+\epsilon$ and $\beta=2$, we have
 \[
 \sum_{\substack{ n \in \Z[\omega_3], n \equiv 1 \mod 3 \\ (n,d)=1 \\ N(n) \leq Z}} \chi_n(m)\ll Z^{\frac{2}{3}+\epsilon}2^{2\omega(d)}\exp\left((\log m)^{\frac{2}{3}-\epsilon}\right).
 \]
 Plugging this estimate in \eqref{15/09/2022} together with $Z=\frac{X}{N(d\ell^2)}$, we conclude that
 \begin{align*}
 \sum_{\chi \in \cF_3(X)} \chi(m) \ll& X^{\frac{2}{3}+\epsilon}\sum_{\substack{d \in \Z, d  \equiv 1 \mod 3 \\ d \leq X}} \frac{2^{2\omega(d)}}{d^{\frac{4}{3}}} \sum_{\substack{\ell \in \Z[\omega_3], \ell \equiv 1 \mod 3\\(\ell,d)=1\\ N(\ell) \leq \sqrt{X/N(d)}}} \frac{1}{N(\ell)^{\frac{4}{3}}}\exp\left((\log m)^{\frac{2}{3}-\epsilon}\right)\\
 \ll& X^{\frac{2}{3}+\epsilon}\exp\left((\log m)^{\frac{2}{3}-\epsilon}\right),
 \end{align*}
 which finishes the proof.
 	\end{proof}

The following is the analogue of Theorem 6.1 from \cite{GS} for the case of cubic characters. 
\begin{prop}\label{moment of SEP}  Assume GRH for Hecke $L$-functions over $\Q(\omega_3)$.
	Let $2\leq y \leq \exp(\sqrt{\log X})$ and let $z$ be a real number. Then we have 
	\begin{align*}
	\frac{1}{|\mathcal{F}_3(X)|}\sum_{\chi \in \mathcal{F}_3(X)} |L(1, \chi; y)|^{2z} = \mathbb{E}(|L(1,\mathbb{X};y)|^{2z})\left(1+O\left(E(z,y,X)\right)\right), \end{align*}
	where $E(z,y,X)$ is a positive real number such that 
	for $y\geq 4(|z|+1)$, 
	\begin{align*}
	E(z,y,X)\ll X^{-\frac14}\exp\left(\frac{16(|z|+1)}{\log (4(|z|+1))}+
	\frac{120(|z|+1)}{\sqrt{\log X}}+2\log_2 y+6e^{30} |z|\log\left(\frac{\log y}{\log(4(|z|+1))}\right)\right),
	\end{align*}
and otherwise,
	\[
		E(z,y,X)\ll X^{-\frac14}\exp\left(\frac{34 y}{\log y}\right).
	\]
	
\end{prop}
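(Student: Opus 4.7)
The plan is to adapt the argument of \cite[Theorem 6.1]{GS} to the cubic setting, with the key input being Lemma \ref{character sum under GRH} in place of the quadratic character sum bound. Since $z$ is real, I expand
\[
|L(1,\chi;y)|^{2z}=\prod_{p\le y}\left(1-\frac{\chi(p)}{p}\right)^{-z}\left(1-\frac{\overline{\chi}(p)}{p}\right)^{-z}=\sum_{\substack{n,m\ge 1\\ P(nm)\le y}}\frac{d_z(n)d_z(m)\,\chi(n)\overline{\chi}(m)}{nm},
\]
where $P(k)$ denotes the largest prime factor of $k$. Using $\overline{\chi}(m)=\chi(m^2)$ for cubic $\chi$, the inner character value is $\chi(nm^2)$. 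I truncate the sum at $nm\le N$ for a parameter $N$ to be chosen (roughly of size $X^{1/2}$), split into the ``diagonal'' $nm^2=\cube$ and ``off-diagonal'' $nm^2\neq\cube$ parts, and average over $\cF_3(X)$.

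For the main term (diagonal, $nm^2=\cube$), Proposition \ref{prop-gives-RVs} under GRH gives
\[
\frac{1}{|\cF_3(X)|}\sum_{\chi\in\cF_3(X)}\chi(nm^2)=\prod_{\substack{p\equiv 1\bmod 3\\ p\mid nm}}\frac{p}{p+2}+O\bigl((nm)^{\epsilon}X^{-1/2+\epsilon}\bigr),
\]
and the Euler factor is precisely $\E(\X(nm^2))$ by Lemma \ref{expectation}. Thus the truncated diagonal agrees with the corresponding truncated sum in $\E(|L(1,\X;y)|^{2z})$, and completing the latter to the full sum only adds something negligible (again by a tail bound on $y$-smooth divisor sums). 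This identifies the main term as $\E(|L(1,\X;y)|^{2z})$ up to controlled errors.

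For the off-diagonal $nm^2\neq\cube$ part, I invoke Lemma \ref{character sum under GRH}. Choosing $\sigma$ close to $1/2$ (say $\sigma=\tfrac{1}{2}+\tfrac{1}{\sqrt{\log X}}$) and a suitable $\beta$, the lemma yields
\[
\sum_{\chi\in\cF_3(X)}\chi(nm^2)\ll X^{\sigma}\exp\bigl((\log nm)^{\beta(1-\sigma)}\bigr),
\]
so dividing by $|\cF_3(X)|\asymp X$ produces a saving of $X^{-1/2+o(1)}$. The resulting off-diagonal error is then
\[
\ll X^{-1/2+o(1)}\Bigl(\sum_{\substack{n\le N\\ P(n)\le y}}\frac{d_{|z|}(n)}{n}\Bigr)^2\exp\bigl((\log N)^{\beta(1-\sigma)}\bigr),
\]
and the inner sum is controlled by $\prod_{p\le y}(1-1/p)^{-|z|}\asymp (\log y)^{|z|}$ plus small corrections, which when combined with $X^{-1/2+o(1)}$ yields the $X^{-1/4}$ saving after choosing $N\asymp X^{1/2-\delta}$.

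The remaining task is the tail $nm>N$. Here one uses, again under GRH, the Granville--Soundararajan style bound $|L(1,\chi;y)|\ll(\log_2 y)^{O(1)}$ for all but an exceptional set, together with a Rankin-type inequality
\[
\sum_{\substack{nm>N\\ P(nm)\le y}}\frac{d_{|z|}(n)d_{|z|}(m)}{nm}\ll N^{-\alpha}\prod_{p\le y}\bigl(1-p^{\alpha-1}\bigr)^{-2|z|}
\]
for a small $\alpha>0$, and the Euler product is then estimated by Mertens-type arguments. Optimizing $\alpha$ in terms of $|z|/\log y$ produces the two explicit bounds on $E(z,y,X)$: when $y\ge 4(|z|+1)$ the Euler product behaves like $\exp(O(|z|\log(\log y/\log(|z|+1))))$, giving the first estimate, whereas when $y<4(|z|+1)$ the trivial bound on $y$-smooth divisor sums yields $\exp(O(y/\log y))$.

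The main obstacle will be the tail analysis in paragraph four, specifically estimating $\sum d_{|z|}(n)/n^{1-\alpha}$ over $y$-smooth $n$ uniformly in the delicate regime where $|z|$ is comparable to or larger than $y$, and matching this against the GRH-based $L^\infty$ bound for $|L(1,\chi;y)|^{2z}$ in a way that absorbs all dependence on $z$ and $y$ into the two stated forms of $E(z,y,X)$. The cube/non-cube sieve is combinatorially heavier than in the quadratic setting (weaker P\'olya--Vinogradov over $\Z[\omega_3]$), but Lemma \ref{character sum under GRH} already provides the GRH strength needed, so no new arithmetic input is required beyond careful bookkeeping.
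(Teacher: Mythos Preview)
Your proposal diverges from the paper's argument in a way that creates a real gap. The paper does \emph{not} truncate at $nm\le N$: it works with the full $y$-smooth sum and decomposes each $n_i$ as $r_1 r_2^2 r_3^3 r_4^3$ (squarefree coprime $r_1,r_2,r_3$, with $r_4$ supported on the same primes). The point is that $S(X;n_1n_2^2)$ depends only on the cube-free part $r_1s_1^2 r_2^2 s_2 r_3^3 s_3^3$, so the sums over $r_4,s_4$ and then over the $r_i,s_i$ collapse into explicit Euler products built from the functions
\[
c_{p,\omega_3^j}(z)=\tfrac{1}{3}\Bigl((1-1/p)^{-z}+\omega_3^j(1-\omega_3/p)^{-z}+\omega_3^{2j}(1-\omega_3^2/p)^{-z}\Bigr).
\]
The diagonal produces exactly $\E(|L(1,\X;y)|^{2z})$ via the identity $c_{p,1}^2+c_{p,\omega_3}^2+c_{p,\omega_3^2}^2=\tfrac13(1-2/p+1/p^2)^{-z}+\tfrac23(1+1/p+1/p^2)^{-z}$. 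The non-cube part, after inserting the bound $|S(X;m)|\ll X^{3/4}m^{5/\sqrt{\log X}}$ derived from Lemma~\ref{character sum under GRH}, becomes an Euler product in $|c_{p,\omega_3^j}(z)|\,p^{O(1/\sqrt{\log X})}$.

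The step you are missing is how to turn an absolute error into the stated \emph{relative} error $\E(|L(1,\X;y)|^{2z})\cdot E(z,y,X)$. Your off-diagonal bound $X^{-1/2+o(1)}(\sum d_{|z|}(n)/n)^2$ is an absolute quantity of rough size $X^{-1/2}(\log y)^{2|z|}$, which need not be comparable to $\E(|L(1,\X;y)|^{2z})$ (think of $z<0$, or $|z|$ large relative to $y$). The paper handles this by bounding the non-cube Euler product \emph{pointwise} against the expectation: for $p\le 4(|z|+1)$ one uses $|c_{p,\omega_3^j}(z)|\le \tfrac13(1-2/p+1/p^2)^{-z/2}+\tfrac23(1+1/p+1/p^2)^{-z/2}$ and squares, recovering the $E_p(z)$ factor up to a constant; for $p>4(|z|+1)$ one uses $|c_{p,\omega_3^2}|\le 4|z|/(3p)$ etc. This prime-by-prime comparison is what produces the two stated shapes of $E(z,y,X)$: the split at $y\gtrless 4(|z|+1)$ reflects whether the second range of primes is nonempty, not a tail/Rankin optimization as you suggest. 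Without this Euler-product comparison your outline does not reach the stated bounds.
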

\begin{proof} We have
	\begin{align} \label{sum-chi}
	\sum_{\chi \in \mathcal{F}_3(X)} |L(1, \chi; y)|^{2z} = \sum_{\substack{n_1, n_2=1\\n_1,n_2\in \mathcal{S}(y)}}^\infty \frac{d_z(n_1)d_z(n_2)}{n_1n_2} S(X; n_1n_2^2),
	\end{align}
	where $$S(X; n) = \sum_{\chi \in \mathcal{F}_3(X)} \chi(n),$$ and $\mathcal{S}(y)$ denotes the set of integers whose prime factors are all $\leq y$.
	We decompose $n_1$ and $n_2$ as  $n_1= r_1 r_2^2 r_3^3 r_4^3$ and $n_2=s_1s_2^2s_3^3s_4^3$, where $r_1, r_2, r_3$ are square-free and coprime in pairs, and $s_1, s_2, s_3$ are as well. In this decomposition we also have $p \mid r_4 \Rightarrow p \mid r_1 r_2 r_3$ and   $p \mid s_4 \Rightarrow p \mid s_1 s_2 s_3$.

	Then, $S(X; n_1 n_2^2) = S(X; r_1 s_1^2 r_2^2 s_2 r_3^3 s_3^3)$ and we rewrite \eqref{sum-chi} as
	\begin{align}\label{eq:sieve2}
	\sum_{r_1 \in \mathcal{S}(y)} \mu^2(r_1) \sum_{\substack{r_2 \in \mathcal{S}(y)\\ (r_1, r_2)=1}} \mu^2(r_2)  \sum_{\substack{r_3 \in \mathcal{S}(y)\\(r_3, r_1 r_2)=1}} \mu^2(r_3)\sum_{s_1 \in \mathcal{S}(y)} \mu^2(s_1) \sum_{\substack{s_2 \in \mathcal{S}(y)\\ (s_1, s_2)=1}} \mu^2(s_2)  \sum_{\substack{s_3 \in \mathcal{S}(y)\\(s_3, s_1 s_2)=1}} \mu^2(s_3) \nonumber\\
	\times
	S(X; r_1 s_1^2 r_2^2 s_2 r_3^3 s_3^3) \sum_{p \mid r_4 \Rightarrow p \mid r_1 r_2 r_3}  \frac{d_z(r_1 r_2^2 r_3^3 r_4^3)}{r_1 r_2^2 r_3^3 r_4^3}\sum_{p \mid s_4 \Rightarrow p \mid s_1 s_2 s_3}  \frac{d_z(s_1 s_2^2 s_3^3 s_4^3)}{s_1 s_2^2 s_3^3 s_4^3}.
	\end{align}
	We first evaluate the contribution 
when $n_1 n_2^2 = \cube \iff r_1s_1^2r_2^2s_2=\cube.$\\
 If we write $(r_1,s_1)=U_1$, $(r_2,s_2)=U_2$, then $r_1=U_1r_{1,1}$, $s_1=U_1s_{1,1}$, $r_2=U_2r_{2,1}$, $s_2=U_2s_{2,1}$. Then we get that the main term comes from $r_{1,1}s_{1,1}^2r_{2,1}^2s_{2,1}=\cube$. Because of coprimality (we have that $(r_{1,1},s_{1,1})=(r_{2,1},s_{2,1})=(r_{1,1},r_{2,1})=(s_{1,1},s_{2,1})=1$) and  square-free conditions ($r_{1,1}, r_{2,1}, s_{1,1}, s_{2,1}$ are square-free) we get that $r_{1,1}=r_{2,1}=s_{1,1}=s_{2,1}=1$. This gives $r_1=s_1=r$ (say) and $r_2=s_2=s$ (say). 
 Using Proposition \ref{prop-gives-RVs}, this results in
	\begin{align} \label{eq-5/8}
	S(X; r_1 s_1^2 r_2^2 s_2 r_3^3 s_3^3)=S(X; r^3 s^3 r_3^3 s_3^3) = |\mathcal{F}_3(X)| \prod_{p \mid r s r_3 s_3} \alpha_p + O \left( 3^{\omega(r s r_3  s_3)} X^{\frac{1}{2} + \epsilon} \right),
	\end{align}
	where the factors $\alpha_p$ in the Euler product are defined in \eqref{eq:alphas} and depend on $p \mod 3$.
	We can now write (for $r_1, r_2, r_3$ square-free and coprime)
	\begin{align*}
	\sum_{\substack{r_4 \geq1 \\ p \mid r_4 \Rightarrow p \mid r_1 r_2 r_3}}  \frac{d_z(r_1 r_2^2 r_3^3 r_4^3)}{r_1 r_2^2 r_3^3 r_4^3} &= 
	\prod_{p \mid r_1} \sum_{k =0}^{\infty} \frac{d_z(p^{3k+1})}{p^{3k+1}}
	\prod_{p \mid r_2}  \sum_{k =0}^{\infty} \frac{d_z(p^{3k+2})}{p^{3k+2}}
	\prod_{p \mid r_3}   \sum_{k =0}^{\infty} \frac{d_z(p^{3k+3})}{p^{3k+3}} \\
	&= \prod_{p \mid r_1} c_{p, \omega_3^2} (z) \prod_{p \mid r_2} c_{p, \omega_3} (z)  \prod_{p \mid r_3} \left( c_{p,1} (z) - 1 \right),
	\end{align*}
	where
	\begin{align*}
	c_{p,\omega_3^j}(z) &= \frac{\left(1-\frac{1}{p}\right)^{-z}+\omega_3^j\left(1-\frac{\omega_3}{p}\right)^{-z}+\omega_3^{2j}\left(1-\frac{\omega_3^2}{p}\right)^{-z}}{3}. 
	\end{align*}
	We remark that $c_{p,\omega_3^j}(z) \in \R$ since $z \in \R$.

A similar treatment applies to the sum over  $s_4$. Replacing in \eqref{eq:sieve2}, the contribution of the main term of \eqref{eq-5/8} to \eqref{sum-chi}  
   is
	\begin{align*}
& =|\mathcal{F}_3(X)|\sum_{r \in \mathcal{S}(y)}\sum_{\substack{s \in \mathcal{S}(y)\\ (r, s)=1}}\sum_{\substack{r_3, s_3 \in \mathcal{S}(y)\\ (r_3, rs)=1\\ (s_3, rs)=1}}\mu^2(r)\mu^2(s)\mu^2(r_3)\mu^2(s_3)\
\prod_{p \mid r s r_3 s_3} \alpha_p \prod_{p\mid r}c^2_{p, \omega_3^2}(z)\prod_{p\mid s}c^2_{p, \omega_3}(z)\\
& \hspace{1cm} \times \prod_{p\mid r_3}\left(c_{p, 1}(z)-1\right)\prod_{p\mid s_3}\left(c_{p, 1}(z)-1\right)\\
& =|\mathcal{F}_3(X)|  \prod_{p \leq y} \Bigg( 1 + \sum_{\substack{k_1, k_2, k_{3}, k_{4} \in \left\{ 0,1 \right\} \\  (k_1, k_2, k_3, k_4) \neq (0,0,0,0) \\ k_1 = 1 \Rightarrow k_2, k_3, k_4 = 0 \\ \\  k_2 = 1 \Rightarrow k_1, k_3, k_4 = 0 }}
  \alpha_p c_{p, \omega_3^2}(z)^{2k_1} c_{p, \omega_3}(z)^{2k_2} \left(  c_{p,1}(z)-1 \right)^{k_3}  \left(c_{p,1}(z) -1 \right)^{k_4} \Bigg) \\
& =|\mathcal{F}_3(X)| \prod_{\substack{p\leq y}}\left(1+\alpha_p c^2_{p, \omega_3^2}(z)+\alpha_p c^2_{p, \omega_3}(z) +2\alpha_p (c_{p, 1}(z)-1)+\alpha_p (c_{p, 1}(z)-1)^2\right)\\
&=|\mathcal{F}_3(X)| \prod_{\substack{p\leq y}}\left(1-\alpha_p +\alpha_p\left(c^2_{p, 1}(z)+c^2_{p, \omega_3}(z)+c^2_{p, \omega_3^2}(z)\right)\right)\\
&= |\mathcal{F}_3(X)|	\mathbb{E}(|L(1, \mathbb{X}; y)|^{2z}),
	\end{align*}
      where
in the last equality, we have applied the fact that 
	\begin{align*}
	\mathbb{E}(|L(1, \mathbb{X}; y)|^{2z})&=\prod_{p\leq y} \mathbb{E}\left(\left|1 - \frac{\mathbb{X}(p)}{p} \right|^{-2z} \right) =\prod_{p\leq y} \mathbb{E}\left(\left|1 - \frac{\mathbb{X}(p) + \overline{\mathbb{X}}(p)}{p} +  \frac{\mathbb{X}(p)  \overline{\mathbb{X}}(p)}{p^2}\right|^{-z} \right)\\
	&= \prod_{p\leq y} \left(1-\alpha_p + \frac{\alpha_p}{3} \left( 1-\frac{2}{p} + \frac{1}{p^2}\right)^{-z}  + \frac{2 \alpha_p}{3} \left(1+\frac{1}{p} + \frac{1}{p^2} \right)^{-z}\right)
	\end{align*}
	and
	\begin{align} \label{useful}
	 c^2_{p, 1}(z)+c^2_{p, \omega_3}(z)+c^2_{p, \omega_3^2}(z)=\frac13 \left(1-\frac{2}{p}+\frac{1}{p^2}\right)^{-z}+\frac23 \left(1+\frac{1}{p}+\frac{1}{p^2}\right)^{-z}.
	\end{align}
	Now replacing the error term of \eqref{eq-5/8} in \eqref{eq:sieve2}, and using \eqref{useful}, the contribution to \eqref{sum-chi}  is
\begin{align*}
&\ll X^{\frac{1}{2}+\epsilon} \prod_{p\leq y} \left( c_{p,1}^2(z)+c_{p, \omega_3^2}^2(z)+c_{p, \omega_3}^2(z)\right) \\
&= X^{\frac{1}{2}+\epsilon}  \mathbb{E}(|L(1, \mathbb{X}; y)|^{2z}) \prod_{p\leq y} \left( 1 + O \left( \frac{2}{p} \right) \right) 
\ll X^{\frac{1}{2}+\epsilon} \log^2{y} \;\mathbb{E}(|L(1, \mathbb{X}; y)|^{2z}).
\end{align*}
We now proceed to bound the contribution
when $m = n_1 n_2^2 = r_1s_1^2r_2^2s_2 r_3^3 s_3^3 \neq  \cube$ in \eqref{eq:sieve2}.
 By Lemma \ref{character sum under GRH}, we have
 \begin{align} \label{bound-char-sum}
|S(X; m )|\ll X^{\frac{2}{3}+\epsilon}\exp\left( (\log{m})^{\frac23 - \epsilon} \right) \ll  X^{\frac{3}{4}} m^{\frac{5}{\sqrt{\log{X}}}},  
\end{align}
where the second bound follows by considering the two cases, namely,
 $\log m \leq  \left(\frac{\log X}{25}\right)^{\frac32}$ and
 $\log m >  \left(\frac{\log X}{25}\right)^{\frac32}$. In the first case,
 $$
 \exp\left( (\log{m})^{\frac23 - \epsilon} \right) \leq \exp \left( \frac{\log{X}}{25} \right) = X^{\frac{1}{25}}.
 $$
  In the second case,
 $$
 \exp\left( (\log{m})^{\frac23} \right) =   \exp\left( \frac{\log{m}}{(\log{m})^{\frac13}} \right) \leq \exp\left(\frac{5\log m}{\sqrt{\log X}}\right) = m^{\frac{5}{\sqrt{\log{X}}}} .
 $$
Replacing in  \eqref{bound-char-sum} with $m =  r_1s_1^2r_2^2s_2 r_3^3 s_3^3$ in \eqref{eq:sieve2}, the contribution when $n_1 n_2^2 \neq \cube$ is bounded by
	\begin{align*}
	& \ll X^\frac{3}{4}    \sum_{\substack{r_1, r_2, r_3\in \mathcal{S}(y)\\ (r_2, r_1)=1\\ (r_3, r_1 r_2)=1\\ r_1, r_2, r_3 \; \text{square-free}}}
	\sum_{\substack{s_1, s_2, s_3\in \mathcal{S}(y)\\ (s_2, s_1)=1\\ (s_3, s_1 s_2)=1\\  s_1, s_2, s_3 \; \text{square-free}}}
	\left| \prod_{p\mid r_1}(c_{p, \omega_3^2}(z) p^\frac{5}{\sqrt{\log X}})\prod_{p\mid r_2}(c_{p, \omega_3}(z)p^\frac{10}{\sqrt{\log X}})\prod_{p\mid r_3}((c_{p, 1}(z)-1)p^\frac{15}{\sqrt{\log X}}) \right. \\
& \times  \left .
 \prod_{p\mid s_1}(c_{p, \omega_3^2}(z)p^\frac{10}{\sqrt{\log X}})\prod_{p\mid s_2}(c_{p, \omega_3}(z)p^\frac{5}{\sqrt{\log X}})\prod_{p\mid s_3}((c_{p, 1}(z)-1)p^\frac{15}{\sqrt{\log X}})\right| \\
	  &= X^\frac{3}{4} \left| \prod_{p\leq y} \left( 1 + c_{p, \omega_3^2}(z)  p^\frac{5}{\sqrt{\log X}}+c_{p, \omega_3}(z) p^\frac{10}{\sqrt{\log X}}+(c_{p, 1}(z)-1)p^\frac{15}{\sqrt{\log X}}\right)\right.\\
 &  \times  \left. \left( 1 + c_{p, \omega_3^2}(z) p^\frac{10}{\sqrt{\log X}}+c_{p, \omega_3}(z) p^\frac{5}{\sqrt{\log X}}+(c_{p, 1}(z)-1)p^\frac{15}{\sqrt{\log X}}\right)\right| \\
   &\ll X^\frac{3}{4}  \prod_{p\leq y} \left( 1 + |c_{p, \omega_3^2}(z)|  p^\frac{15}{\sqrt{\log X}}+|c_{p, \omega_3}(z)| p^\frac{15}{\sqrt{\log X}}+|c_{p, 1}(z)-1|p^\frac{15}{\sqrt{\log X}}\right)^2.
 \end{align*}
Now we split the product into the primes that are $\leq 4(|z|+1)$ and those that are  $> 4(|z|+1)$. 
When $p\leq 4(|z|+1)$, we use the bound (which is valid for all $p$)
 \begin{align*}
 |c_{p,\omega_3^j}(z)| \leq&\frac{1}{3} \left(\left(\left|1-\frac{1}{p}\right|^2\right)^{-\frac{z}{2}} +\left(\left|1-\frac{\omega_3 }{p}\right|^2\right)^{-\frac{z}{2}}+\left(\left|1-\frac{\omega_3^2 }{p}\right|^2\right)^{-\frac{z}{2}}\right)\nonumber\\
= &\frac{1}{3}\left(1-\frac{2}{p}+\frac{1}{p^2}\right)^{-\frac{z}{2}}+\frac{2}{3}\left(1+\frac{1}{p}+\frac{1}{p^2}\right)^{-\frac{z}{2}}.
\end{align*}
                        
When  $p\geq 4(|z|+1)$ we use the bounds 
	 \begin{align*}
  	|c_{p,\omega_3^2}(z)|\leq\frac{4|z|}{3p}, \quad |c_{p,\omega_3}(z)|\leq\frac{4|z|^2}{3p^2}, \quad |c_{1,p}(z)-1|\leq \frac{4|z|^3}{3p^3}.
  	\end{align*}
We have 
 \begin{align*}
   &\ll X^\frac{3}{4}  \prod_{p\leq \min\{ y, 4(|z|+1)\}} \left( 1 + |c_{p, \omega_3^2}(z)|  p^\frac{15}{\sqrt{\log X}}+|c_{p, \omega_3}(z)| p^\frac{15}{\sqrt{\log X}}+|c_{p, 1}(z)-1|p^\frac{15}{\sqrt{\log X}}\right)^2\\
 &\times \prod_{\min\{ y, 4(|z|+1)\}<p<y} \left( 1 + |c_{p, \omega_3^2}(z)|  p^\frac{15}{\sqrt{\log X}}+|c_{p, \omega_3}(z)| p^\frac{15}{\sqrt{\log X}}+|c_{p, 1}(z)-1|p^\frac{15}{\sqrt{\log X}}\right)^2\\
  &\ll X^\frac{3}{4}  \prod_{p\leq \min\{ y, 4(|z|+1)\}} p^\frac{30}{\sqrt{\log X}}\left(2+\left(1-\frac{2}{p}+\frac{1}{p^2}\right)^{-\frac{z}{2}}+2\left(1+\frac{1}{p}+\frac{1}{p^2}\right)^{-\frac{z}{2}}\right)^2\\&\times \prod_{\min\{ y, 4(|z|+1)\}<p<y}\left(1+\frac{6p^\frac{30}{\sqrt{\log X}}|z|}{p}\right).
 \end{align*}
 
 Now we use that $1\leq \left(1-\frac{2}{p}+\frac{1}{p^2}\right)^{-\frac{z}{2}}+\left(1+\frac{1}{p}+\frac{1}{p^2}\right)^{-\frac{z}{2}}$ to get
 \begin{align*}
 & \left(2+\left(1-\frac{2}{p}+\frac{1}{p^2}\right)^{-\frac{z}{2}} +2\left(1+\frac{1}{p}+\frac{1}{p^2}\right)^{-\frac{z}{2}}\right)^2\\
 & \leq  54 \left(1-\alpha_p + \frac{\alpha_p}{3} \left( 1-\frac{2}{p} + \frac{1}{p^2}\right)^{-z}  + \frac{2 \alpha_p}{3} \left(1+\frac{1}{p} + \frac{1}{p^2} \right)^{-z}\right) \left(1+O\left(\frac{2}{p}\right)\right).
 \end{align*}
 
 Then we get for $4(|z|+1)<y$,
\begin{align*}
  &\ll X^\frac{3}{4} \mathbb{E}(|L(1,\mathbb{X};y)|^{2z}) \prod_{p\leq4(|z|+1)} 54p^\frac{30}{\sqrt{\log X}}\left(1+O\left(\frac{2}{p}\right)\right)\prod_{4(|z|+1)<p<y}\left(1+\frac{6p^\frac{30}{\sqrt{\log X}}|z|}{p}\right)\\
  &\ll X^\frac{3}{4} \mathbb{E}(|L(1,\mathbb{X};y)|^{2z}) \log^2 y \prod_{p\leq 4(|z|+1)} 54 p^\frac{30}{\sqrt{\log X}} \prod_{4(|z|+1)<p<y}\left(1+\frac{6p^\frac{30}{\sqrt{\log X}}|z|}{p}\right)\\
  &\ll X^\frac{3}{4} \mathbb{E}(|L(1,\mathbb{X};y)|^{2z})
 \exp \left(\frac{16(|z|+1)}{\log(4(|z|+1))} +\frac{120(|z|+1)}{\sqrt{\log X}}+ 2 \log_2 y + 6 e^{30} |z|\log \left( \frac{\log y}{\log (4(|z|+1))}\right)\right).
 \end{align*}
 For $y\leq 4(|z|+1)$, we have 
 \begin{align*}
   &\ll X^\frac{3}{4} \mathbb{E}(|L(1,\mathbb{X};y)|^{2z}) \exp\left(\frac{34 y}{\log y} \right),
 \end{align*}
since $\log y \leq \sqrt{\log X}$. This finishes the proof.

	\end{proof}

\begin{proof}[Proof of Theorem   \ref{asymp moment under GRH}.]
 We directly use Proposition \ref{moment of SEP} with $y = B \log{X} \log_2{X}$ and $z=\frac{\log{X} \log_2{X}}{e^{37} \log{B}}$, where $e^{40} \leq B \leq (\log_2{X})^C$ to  prove the statement. Indeed,  from the choice of $y$ and $z$, we see that $y\geq 4(|z|+1)$. Therefore, applying the first part of Proposition \ref{moment of SEP}, we have
 	\begin{align*}
 	E(z, y, X)&\ll X^{-\frac{1}{4}}\exp\left(\frac{16 \log X \log_2 X}{e^{37}\log_2 X \log B}+\frac{120 \log X \log_2 X}{e^{37}\log B \sqrt{\log X}}+2\log_2 X\right.\\
 	&\left. +\frac{6e^{30}\log X \log_2 X}{e^{37}\log B}\log \frac{\log_2 X+\log_3 X +\log B}{\log_2 X+\log_3 X-37-\log_2 B+\log 4}\right).
 	\end{align*}
 Now notice that 
	\begin{align*}
 	\log \frac{\log_2 X +\log_3 X +\log B}{\log_2 X+\log_3 X-37-\log_2 B+\log 4}
 	&=\log\left( \left(1+\frac{\log B}{\log_2 X+\log_3 X}\right)\left(1-\frac{37-\log 4+\log_2 B}{\log_2 X +\log_3 X}\right)^{-1}\right)\\
 	&\leq \frac{2\log B}{\log_2 X},
 	\end{align*}
where for the last inequality we applied the condition $37-\log 4+\log_2 B<\log B$, which is true provided that $B\geq e^{40}$.   	
 	
Incorporating this observation, we obtain 	
 	\begin{align*}
 	E(z, y, X)	&\ll X^{-\frac{1}{4}}\exp\left(\frac{17 \log X \log_2 X}{e^{37}\log_2 X \log B}+\frac{6e^{30} \log X \log_2 X}{e^{37}\log B} \frac{2\log B}{\log_2 X}\right)\\
 	&\ll X^{-\frac{1}{4}} \exp\left(\frac{17 \log X}{e^{37}\log B}+\frac{12}{e^{7}}\log X\right)\ll X^{-\frac{23}{100}}.
 	\end{align*}

\end{proof}
We now proceed to establish a connection between $L(1, \chi)$ and $L(1, \chi; y)$ under GRH. 
\begin{prop}\label{l function via SEP}
Let $C > 0$, and let $2 \leq A \leq (\log_2 X)^{C}$ be a real number and $y:= A^4\log X \log_2 X$.	Under GRH, we have 
	\[
	L(1, \chi)=L(1, \chi; y)\left(1+O\left(\frac{1}{A\log_2 X}\right)\right)
	\]
	for all but at most $X^{\frac{14}{15}}$ cubic characters in $\mathcal{F}_3(X)$.
	 \end{prop}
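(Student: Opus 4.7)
The plan is to control the logarithmic difference $\log(L(1,\chi)/L(1,\chi;y))$ for most cubic characters via a combination of GRH and the moment method. Writing the Euler products and using $-\log(1-x) = x + O(x^2)$, one has
\[
\log\frac{L(1,\chi)}{L(1,\chi;y)} = \sum_{p>y}\frac{\chi(p)}{p} + O\!\left(\frac{1}{y\log y}\right),
\]
and the remainder term is $O(1/(A^4\log X(\log_2 X)^2))$, already far within the target $1/(A\log_2 X)$. Thus it suffices to establish $|\sum_{p>y}\chi(p)/p| = O(1/(A\log_2 X))$ outside an exceptional set of size $X^{14/15}$.

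I would split the sum at a threshold $z$, taking $z$ large enough so that the GRH tail is negligible but small enough that $\sum_{y<p\leq z}1/p$ remains manageable. Under GRH one has $\sum_{p\leq t}\chi(p)\log p \ll \sqrt{t}(\log qt)^2$, and partial summation then yields $\sum_{p>z}\chi(p)/p \ll (\log qz)^2/(\sqrt{z}\log z)$ uniformly in $\chi$; choosing for instance $z = (\log X)^6$ makes this tail $\ll 1/\log X$, which is comfortably within the target, and simultaneously ensures $\sum_{y<p\leq z}1/p = O(1)$.

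The central piece $S_\chi' := \sum_{y<p\leq z}\chi(p)/p$ is handled by the $2k$-th moment method. Expanding $|S_\chi'|^{2k}$ and invoking Proposition \ref{prop-gives-RVs} under GRH, the main term comes from the $k!$ diagonal pairings in which each prime appears once in the $\chi$-factor and once in the $\bar\chi$-factor, giving a contribution $|\cF_3(X)|\,k!\,\sigma^{2k}$ with $\sigma^2 \ll 1/(y\log y) \ll 1/(A^4\log X(\log_2 X)^2)$. Other cube configurations, namely multiplicities $(a,b)$ satisfying $a+2b\equiv 0\pmod 3$ with $a+b\geq 3$, introduce extra prime incidences each costing at least a factor $1/y$, and are dominated by the diagonal. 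The non-cube contributions give an error of the form $X^{1/2+\epsilon}\,C_0^{k}$, where $C_0$ depends on $\sum_{y<p\leq z}1/p$.

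Finally, Chebyshev's inequality with threshold $V \asymp 1/(A\log_2 X)$ bounds the exceptional set by $V^{-2k}\sum_\chi|S_\chi'|^{2k}$. A suitable choice $k = k(A,\mathcal{C})$ of the moment order (with $\mathcal{C}$ the exponent in $A\leq(\log_2 X)^{\mathcal{C}}$) balances the main-term and error contributions to both be $\leq X^{14/15}$; using $(\sigma/V)^2 \ll 1/(A^2\log X)$, the main contribution is $X(k/(eA^2\log X))^k\sqrt{k}$, while the error contribution is $X^{1/2+\epsilon}(C_0 A^2(\log_2 X)^2)^k$. The main obstacle is precisely this optimization of $k$, which is more delicate in the cubic case than in the quadratic analogue due to the richer family of cube configurations; if needed, one can sharpen the error bound via Lemma \ref{character sum under GRH} rather than Proposition \ref{prop-gives-RVs} to avoid losing a factor of the form $9^k(\log_2 X)^{2k}$ at this step.
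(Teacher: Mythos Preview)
Your overall strategy—moment method plus Chebyshev on the prime sum $\sum_{y<p\le z}\chi(p)/p$, with the tail beyond $z$ controlled by GRH—is the same as the paper's. The gap is in the optimization you flag at the end: a single $2k$-th moment over the whole interval $(y,z]$ cannot be made to work, and the fix is not just a sharper character-sum input.

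Concretely, the non-cube error in $\sum_\chi |S'_\chi|^{2k}$ is, using Lemma~\ref{character sum under GRH}, of size
\[
X^{2/3+\epsilon}\exp\!\big((c\,k\log z)^{2/3}\big)\Big(\sum_{y<p\le z}\tfrac{1}{p}\Big)^{2k},
\]
and after dividing by $V^{2k}=(A\log_2 X)^{-2k}$ the exceptional-set bound carries a factor $(C_0 A\log_2 X)^{2k}$ with $C_0=\sum_{y<p\le z}1/p\asymp 1$ for any admissible $z$. Forcing this below $X^{14/15}$ requires $k\ll \log X/\log_3 X$; but the main-term bound $X(k/(A^2\log X))^k\le X^{14/15}$ needs $k\gtrsim \log X/\log A$, and for $A$ bounded (the proposition allows $A=2$) these two constraints are incompatible once $X$ is large.

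The paper resolves this by dyadically decomposing $(y,z]$ into blocks $(y_j,y_{j+1}]$ with $y_{j+1}=2y_j$, applying a separate moment of order $k_j=\dfrac{\log X}{30\log(A\,2^{j/3+1})}$ on each block, and allotting the threshold $V_j=1/(A\,2^{j/6-1}\log_2 X)$. The point is that on a dyadic block $\sum_{y_j<p\le y_{j+1}}1/p\asymp 1/\log y_j\asymp 1/\log_2 X$, so the dangerous $(\log_2 X)^{2k}$ in $V_j^{-2k_j}$ is exactly cancelled, and both the diagonal and the non-cube contributions come out $\ll X^{14/15}$ for every $j$. Summing over the $O(\log_2 X)$ blocks gives the result. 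Without this decomposition (or an equivalent device that makes the Mertens sum on each piece $\asymp 1/\log_2 X$), the argument does not close.
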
 
 To prove this statement we need the following auxiliary result. 
 \begin{lem} Let $k$ be a positive integer.\label{moments over primes} 
 	Under GRH,
 	\begin{align*}
 	\sum_{\chi \in \mathcal{F}_3(X)}\bigg|\sum_{y< p\leq z}\frac{\chi(p)}{p}\bigg|^{2k}&\ll X2^k k! \left(\sum_{y<p\leq z}\frac{1}{p^2}\right)^{k}+X^{2/3+\epsilon}\exp\left((3k\log z)^{\frac{2}{3}}\right)\left(\frac{\log (z/y)}{\log y}\right)^{2k}\\
 	& \ll X\left(\frac{2k}{y\log y}\right)^k +X^{2/3+\epsilon}\exp\left((3k\log z)^{\frac{2}{3}}\right)\left(\frac{\log (z/y)}{\log y}\right)^{2k}.
 	\end{align*}
 \end{lem}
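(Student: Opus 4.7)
The plan is to expand the $2k$-th power into a multiple Dirichlet sum, apply orthogonality over the family to separate the cube (main) and non-cube (error) contributions, and bound each. Setting $S(\chi):=\sum_{y<p\leq z}\chi(p)/p$ and using $\overline{\chi(q)}=\chi(q)^{2}$ for cubic $\chi$, we expand
\[
|S(\chi)|^{2k}=\sum_{\vec p,\vec q}\frac{\chi(PQ^{2})}{PQ},\qquad P=p_{1}\cdots p_{k},\ Q=q_{1}\cdots q_{k},
\]
where $\vec p,\vec q$ range over $k$-tuples of primes in $(y,z]$. Summing over $\chi\in\cF_3(X)$ and splitting according to whether $PQ^{2}$ is a cube, Proposition \ref{prop-gives-RVs} gives $\sum_{\chi}\chi(PQ^{2})=|\cF_3(X)|\prod_{p\mid PQ,\,p\equiv 1\mod 3}\frac{p}{p+2}+O(3^{\omega(PQ)}X^{1/2+\epsilon})$ in the cube case, while Lemma \ref{character sum under GRH} with $\beta=2$, $\sigma=\tfrac{2}{3}+\epsilon$ gives $\sum_{\chi}\chi(PQ^{2})\ll X^{2/3+\epsilon}\exp((\log PQ^{2})^{2/3-\epsilon})$ in the non-cube case.

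For the cube main term, Lemma \ref{expectation} identifies
\[
\sum_{\substack{\vec p,\vec q\\ PQ^{2}=\tinycube}}\frac{1}{PQ}\prod_{\substack{p\mid PQ\\ p\equiv 1\mod 3}}\frac{p}{p+2}=\E\Bigl|\sum_{y<p\leq z}\frac{\X(p)}{p}\Bigr|^{2k}.
\]
The random variables $\X(p)/p$ are independent, mean-zero, with $\E|\X(p)/p|^{2}\le 1/p^{2}$. Expanding the expectation into $(k!)^{2}\sum_{(a_p),(b_p)}\prod_p(a_p!b_p!p^{a_p+b_p})^{-1}$ over exponents with $\sum a_p=\sum b_p=k$ and $a_p\equiv b_p\mod 3$, the diagonal $a_p=b_p$ is bounded by $k!\bigl(\sum 1/p^{2}\bigr)^{k}$ using $(k!/\prod a_p!)^{2}\le k!\cdot(k!/\prod a_p!)$; off-diagonal configurations, where some prime has $a_p+b_p\ge 3$ (forced by the cube constraint whenever $a_p\ne b_p$), contribute extra $p^{-3}$ decay and can be bounded crudely by a factor $\le 2^{k}$. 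This yields the Khintchine-type estimate
\[
\E\Bigl|\sum_{y<p\leq z}\frac{\X(p)}{p}\Bigr|^{2k}\le 2^{k}k!\Bigl(\sum_{y<p\leq z}\frac{1}{p^{2}}\Bigr)^{k},
\]
so the cube main term contributes at most $X\cdot 2^{k}k!\bigl(\sum 1/p^{2}\bigr)^{k}$. The secondary error $O(3^{\omega(PQ)}X^{1/2+\epsilon})$ per pair is absorbed by the non-cube error below.

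For the non-cube contribution, $PQ^{2}\le z^{3k}$ gives $\exp((\log PQ^{2})^{2/3-\epsilon})\le\exp((3k\log z)^{2/3})$, while $\sum_{\vec p,\vec q}1/(PQ)=\bigl(\sum_{y<p\leq z}1/p\bigr)^{2k}\le(\log(z/y)/\log y)^{2k}$ by Mertens (using $\log(1+t)\le t$ to pass from $\log(\log z/\log y)$ to $\log(z/y)/\log y$). Combining the cube and non-cube estimates yields the first stated bound; the second follows at once from $\sum_{y<p\leq z}1/p^{2}\ll 1/(y\log y)$ (Mertens again) together with $k!\le k^{k}$.

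The main obstacle is the moment bound $\E|S(\X)|^{2k}\le 2^{k}k!\bigl(\sum 1/p^{2}\bigr)^{k}$ with the stated constants: extracting the factor $2^{k}$ (rather than a larger power of $k$ or of $\log(\log z/\log y)$) requires carefully leveraging the cube constraint $a_p\equiv b_p\mod 3$ at each prime to gain additional $p^{-3}$ decay off the diagonal, without paying for the combinatorial relaxation of the constraint.
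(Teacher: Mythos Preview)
Your plan is essentially the same as the paper's: expand the $2k$-th power, split cube versus non-cube, apply Lemma~\ref{character sum under GRH} (with $\beta=2$, $\sigma=\tfrac23+\epsilon$) to the non-cube part, and bound the cube part combinatorially. The only genuine difference is cosmetic: for the cube contribution you pass through the random model, identifying it with $\E|S(\X)|^{2k}$ and appealing to a Khintchine-type inequality, whereas the paper works directly with the weights $a_k(m)=\#\{(p_1,\dots,p_k):p_1\cdots p_k=m\}$ and bounds $\sum_{m_1m_2^2=\tinycube} a_k(m_1)a_k(m_2)/(m_1m_2)$.

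What you flag as the ``main obstacle''---extracting the factor $2^k$ in $\E|S(\X)|^{2k}\le 2^k k!\bigl(\sum 1/p^2\bigr)^k$---is precisely where the paper's argument has content. The paper writes $m=\gcd(m_1,m_2)$, observes that $m_1/m$ and $m_2/m$ are coprime cubes (since $a_p\equiv b_p\bmod 3$ at every prime), and then uses the multinomial bound $a_k(m n^3)\le\binom{k}{3\ell}a_{k-3\ell}(m)a_{3\ell}(n^3)$ to separate variables. If you unfold your expectation and parametrize $(a_p,b_p)=(c_p+3d_p,\,c_p+3e_p)$ with $d_pe_p=0$, you recover exactly this decomposition; your sketch ``off-diagonal gives extra $p^{-3}$ decay, bounded crudely by $2^k$'' is the right intuition but does not yet constitute a proof. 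The paper's $\gcd$ decomposition is the concrete way to cash it in.

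Two smaller points. First, invoking Proposition~\ref{prop-gives-RVs} on the cube terms is unnecessary: the paper just uses the trivial bound $|\sum_\chi\chi(m)|\le|\cF_3(X)|\ll X$, which avoids your secondary error $O(3^{\omega(PQ)}X^{1/2+\epsilon})$ altogether (your error is absorbable in the intended range of $k$, but why carry it?). Second, since the factors $\alpha_p\le 1$ are immediately discarded in your Khintchine bound, the detour through $\E|S(\X)|^{2k}$ buys no simplification over bounding $\sum_{PQ^2=\tinycube}1/(PQ)$ directly.
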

 \begin{proof}
 	We write 
 	\[
 	\left(\sum_{y< p\leq z}\frac{\chi(p)}{p}\right)^k=\sum_{y^k<m\leq z^k}\frac{a_k(m)\chi(m)}{m},
 	\]
 	where
 	\[
 	a_k(m)=\sum_{\substack{y<p_1, \dots, p_k\leq z\\ p_1\cdots p_k=m}}1.
 	\]
 	This gives us
 	\[
 	\sum_{\chi \in \mathcal{F}_3(X)}\bigg|\sum_{y< p\leq z}\frac{\chi(p)}{p}\bigg|^{2k}=\sum_{\substack{y^k<m_1, m_2\leq z^k}}\frac{a_k(m_1)a_k(m_2)}{m_1m_2}\sum_{\chi \in \mathcal{F}_3(X)}\chi(m_1 m_2^2).
 	\]
 	Separating the above sum into the cubic and the non-cubic part,  and applying Lemma \ref{character sum under GRH}, we obtain the following bound
 	\begin{align*}
 	&\ll X\sum_{\substack{y^k<m_1, m_2\leq z^k\\ m_1m_2^2=\tinycube}}\frac{a_k(m_1)a_k(m_2)}{m_1m_2}+X^{2/3+\epsilon}\exp\left((3k\log z)^{\frac{2}{3}}\right)\left(\sum_{y<p\leq z}\frac{1}{p}\right)^{2k}\\
 	&\ll X\sum_{\substack{y^k<m_1, m_2\leq z^k\\ m_1m_2^2=\tinycube}}\frac{a_k(m_1)a_k(m_2)}{m_1m_2}+X^{2/3+\epsilon}\exp\left((3k\log z)^{\frac{2}{3}}\right)\left(\frac{\log (z/y)}{\log y}\right)^{2k}.
 	\end{align*}
 	We notice the following facts. For $m=p_1^{e_1}\cdots p_k^{e_k}$ 
 	\begin{equation}\label{eq:ak}
 	a_k(m)=\binom{k}{e_1,\dots,e_r}.
 	\end{equation}
 	For $m_1, m_2$ with $\Omega(m_i)=k_i$, we have  
 	\begin{equation}\label{eq:akk}
 	a_{k_1+k_2}(m_1m_2)\leq \binom{k_1+k_2}{k_2} a_{k_1}(m_1)a_{k_2}(m_2).
 	\end{equation}
 	
 	Now for $m_1m_2^2=\cube$, write $m=(m_1,m_2)$, and $m_1=mm_{1,1}$, $m_2=mm_{2,1}$. We have that $m_{1,1}m_{2,1}^2=\cube$, with $(m_{1,1},m_{2,1})=1$, and this implies $m_{1,1}=m_{1,2}^3$, $m_{2,1}=m_{2,2}^3$.  Setting $\ell=\Omega(m)$ and combining \eqref{eq:ak} and \eqref{eq:akk} several times, one obtains
 	\[a_k(m_1)a_k(m_2)\leq k! \binom{k}{3\ell} a_\ell(m_{1,2})a_\ell(m_{2,2}) a_{k-3\ell}(m),\]
 	and from there one can deduce that
 	\[
 	\sum_{\substack{y^k<m_1, m_2\leq z^k\\ m_1m_2^2=\tinycube}}\frac{a_k(m_1)a_k(m_2)}{m_1m_2}\ll 2^kk!  \left(\sum_{y<p\leq z}\frac{1}{p^2}\right)^{k}.
 	\]
 \end{proof}
\begin{proof}[Proof of Proposition \ref{l function via SEP}]
    Specializing by $Q=X$, $y=(\log X)^{32}$, and $A=4$ in Proposition 2.2 of \cite{GS}, we can say that 
	\[
	L(1, \chi)=L(1, \chi; (\log X)^{32})\left(1+O\left(\frac{1}{\log X}\right)\right)
	\]
	holds for all but at most $X^{\frac{1}{2}}$ characters inside $\mathcal{F}_3(X)$.
	Observe  that for $z>y$,  
	\[
	L(1, \chi; z)=L(1, \chi; y)\exp\left(\sum_{y< p\leq z}\left(\frac{\chi(p)}{p}+O\left(\frac{1}{p^2}\right)\right)\right).
	\]
We take $z=	(\log X)^{32}$ and $y:= A^4\log X \log_2 X$, and it suffices to show that 
\[
\bigg|\sum_{y< p\leq z}\frac{\chi(p)}{p}\bigg|\leq \frac{1}{A\log_2 X}
\]
holds for all but $\ll X^{\frac{14}{15}}$ characters inside $\mathcal{F}_3(X)$. To do this we dyadically divide the interval $(y, z]$ into $J$ subintervals of the form $(y_j,y_{j+1}]$ such that  $y_j:=2^{j-1}y$, where 
$J=[32\log_2 X-\log y]$. (We need this to make the quantity $\log (z/y)$ small in Lemma \ref{moments over primes}.)

On the basis of this division, using a version of Chebyshev's inequality, we have
\begin{align}
\#\Big\{\chi \in \mathcal{F}_3(X)\, :\, \bigg|\sum_{y_j< p\leq y_{j+1}}\frac{\chi(p)}{p}\bigg|\geq \frac{1}{A2^{\frac{j}{6}-1}\log_2 X}\Big\} \nonumber\\
\leq \sum_{\chi \in \mathcal{F}_3(X)}\bigg|\sum_{y_j< p\leq y_{j+1}}\frac{\chi(p)}{p}\bigg|^{2k_j}\times (A2^{\frac{j}{6}-1}\log_2 X)^{2k_j}, \label{prob. bound}
\end{align}
where we take 
\begin{align} \label{kj} k_j =\frac{\log X}{30 \log (A2^{j/3+1})}. \end{align}

We now apply Lemma \ref{moments over primes} to bound \eqref{prob. bound}.
From the first term in the bound of Lemma \ref{moments over primes}, we get the bound
\begin{align}
&\ll X \left(\frac{2k_j}{y_j \log y_j}\right)^{k_j}(A2^{\frac{j}{6}-1}\log_2 X)^{2k_j}\nonumber\\
& \ll X \exp\left(k_j\left(-\log 30-2\log A-2j/3\log 2\right)\right)\label{eq:arghhhh}\\
& \ll X \exp\left(- 2k_j \log (5A2^{j/3}) \right)\ll X^{\frac{14}{15}},\nonumber
\end{align}
by \eqref{kj} and using the fact\footnote{In the estimate \eqref{eq:arghhhh} we particularly use that $y=A^4\log X \log_2X$ as opposed to the perhaps more natural choice $y=A\log X \log_2X$.} that $y_j=2^{j-1} A^4\log X \log_2 X$. Similarly, from the second term in the bound of Lemma \ref{moments over primes}, we get the bound
 \begin{align*}
 &\ll X^{2/3+\epsilon}\exp((3k_j \log y_{j+1})^{\frac{2}{3}})\left(\frac{\log (y_{j+1}/y_j)}{\log y_j}\right)^{2k_j} (A2^{\frac{j}{6}-1}\log_2 X)^{2k_j}\\
 &\ll X^{2/3+2\epsilon}\exp(2k_j \log(2^{j/6-1}A)+3k_j^{\frac{2}{3}}(\log_2 X)^{\frac{2}{3}} )) \ll X^{\frac{4}{5}}.
 \end{align*}
 Therefore we have that 
 \[
\sum_{j=1}^{J} \bigg|\sum_{y_j< p\leq z_j}\frac{\chi(p)}{p}\bigg|\leq \frac{1}{A\log_2 X}
 \]
holds for all but at most $X^{\frac{14}{15}}$ characters inside $\mathcal{F}_3(X)$.
	\end{proof}

\begin{prop}
For any $r, y>0$, define
	\begin{align*}
	\mathcal{L}(r; y) &:=\log(\mathbb{E}(|L(1,\mathbb{X}; y)|^{2r}))=\sum_{p\leq y}\log E_p(r),\\ 
	\widetilde{\mathcal{L}}(r; y) &:=\log(\mathbb{E}(|L(1,\mathbb{X}; y)|^{-2r}))=\sum_{p\leq  y}\log E_p(-r).
	\end{align*}
Then
	\begin{align}\label{expectation at y}
	\mathcal{L}(r; y)=2r \log \log \min\{r, y\}+2r \gamma+\frac{2r \left(C_{\max}(r/y)-1\right)}{\log r}+O\left(\frac{r}{(\log r)^2}\right)
	\end{align} 
	and \[
\widetilde{\mathcal{L}}(r; y)= \log \log \min\{r, y\}+r\gamma+\frac{r \left( C_{\min}(r/y)-1\right)}{\log r}-r \log\zeta(3)+O\left(\frac{r}{(\log r)^2}\right),
	\]
	where $2C_{\max}(r/y)=\int_{r/y}^{\infty}\frac{f'(u)}{u}du$ and  $C_{\min}(r/y)=\int_{r/y}^{\infty}\frac{\widetilde{f}'(u)}{u}du$ and $f(u)$ and $\widetilde{f}'(u)$ are defined by \eqref{f(t)} and  \eqref{f(t) hat} respectively. 
	
	Moreover, $2C_{\max}$ and $C_{\min}$ satisfy
	\begin{align}\label{tau and kappa relation at y}
	2C_{\max}(r/y)=2C_{\max}+O(r/y) \quad \text{and} \quad C_{\min}(r/y)=C_{\min}+O(r/y).
	\end{align}
\end{prop}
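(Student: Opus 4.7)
The plan is to adapt the proof of Proposition~\ref{L-est-prop} to the truncated Euler product by breaking
\begin{align*}
\mathcal{L}(r;y) = \sum_{p\leq y}\log E_p(r) = \sum_{p \leq \min(y, r^{2/3})} \log E_p(r) \;+\; \sum_{\min(y,r^{2/3}) < p \leq y} \log E_p(r),
\end{align*}
where the second sum is vacuous when $y \leq r^{2/3}$. Lemma~\ref{Ep-bound-lem} processes each range: the small primes contribute $-2r\log(1-1/p) + O(\exp(-3r^{1/3}))$ per term, while the large primes contribute $\log((e^{2r/p}+2e^{-r/p})/3) + O(r/p^2)$. Using the piecewise definition~\eqref{f(t)}, the large-prime sum splits as $2r\sum_{r^{2/3}<p\leq\min(y,r)} 1/p + \sum_{r^{2/3}<p\leq y} f(r/p)$, and the first of these combines with the small-prime sum via $-\log(1-1/p) = 1/p + O(1/p^2)$ to extend the Mertens sum up to $\min(y,r)$, with the passage to $1/p$ costing only $O(r^{1/3}/\log r)$.

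Applying the explicit Rosser--Schoenfeld form of Mertens' third theorem then produces $2r\gamma + 2r\log\log\min\{r,y\}$ with error $O(r/(\log r)^2)$. The remaining saddle sum $\sum_{r^{2/3}<p\leq y} f(r/p)$ is evaluated via partial summation against the prime number theorem, substituting $u = r/t$ to give
\begin{align*}
\frac{r}{\log r}\int_{r/y}^{r^{1/3}}\frac{f(u)}{u^2}\,du \;+\; O\!\Bigl(\tfrac{r}{(\log r)^2}\Bigr);
\end{align*}
the upper limit $r^{1/3}$ can be pushed to $\infty$ at cost $O(r^{2/3}/\log r)$ since $|f|$ is bounded (Lemma~\ref{f-bound-lem}). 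Integration by parts on $\int_{r/y}^\infty f(u)/u^2\,du$, carefully accounting for the jump $f(1^+) - f(1^-) = -2$ of $f$ at $u = 1$ whenever $r/y < 1$ (so $y > r$), reduces the integral to $2C_{\max}(r/y) - 2 + O(r/y)$, which after multiplication by $r/\log r$ produces the stated main term $2r(C_{\max}(r/y) - 1)/\log r$. Assembling the pieces yields \eqref{expectation at y}, and the argument for $\widetilde{\mathcal{L}}(r;y)$ is identical with $(f, C_{\max})$ replaced by $(\widetilde f, C_{\min})$.

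For the stability relation \eqref{tau and kappa relation at y}, Lemma~\ref{f-bound-lem} gives $f'(u) = 2u + O(u^2)$ as $u \to 0^+$, hence
\begin{align*}
2C_{\max} - 2C_{\max}(r/y) \;=\; \int_0^{r/y}\frac{f'(u)}{u}\,du \;=\; \int_0^{r/y}\bigl(2 + O(u)\bigr)\,du \;=\; 2(r/y) + O((r/y)^2),
\end{align*}
so $C_{\max}(r/y) = C_{\max} + O(r/y)$, and the same reasoning applied to $\widetilde f$ gives the corresponding estimate for $C_{\min}(r/y)$. The main bookkeeping obstacle will be reconciling the three regimes $y \leq r^{2/3}$, $r^{2/3} < y \leq r$, and $y > r$ so that the single compact formula holds uniformly with error $O(r/(\log r)^2)$; in particular, verifying that the jump discontinuity of $f$ at $u=1$ (which is crossed by the integral only when $r/y < 1$) and the Mertens switch at $y = r$ (where the $\min$ changes branches) interact consistently, and that the boundary contribution $(y/r)f(r/y)\cdot r/\log r$ from integration by parts remains within the claimed error budget in each regime.
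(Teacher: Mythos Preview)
Your proposal is correct and follows exactly the route the paper intends: the paper's own proof is the single sentence ``This is a direct adaptation of the proof of Proposition~\ref{L-est-prop},'' and what you have written is precisely that adaptation, carried out with the cutoff at $y$ replacing $r^{4/3}$ and the lower limit $r/y$ replacing $r^{-1/3}$ in the saddle integral. Your treatment of the jump of $f$ at $u=1$, the Mertens step, and the stability estimate via $f'(u)=2u+O(u^2)$ all match the paper's computations in Section~\ref{section-L-est-prop}; the regime bookkeeping you flag at the end is the only real work beyond Proposition~\ref{L-est-prop}, and you have identified it correctly.
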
	
\begin{proof}This is a direct adaptation of the proof of Proposition \ref{L-est-prop}.\end{proof}

	\begin{proof}[Proof of Theorem \ref{main distribution result under GRH}] The proof is almost identical to the proof of Theorem \ref{Asympformula} from Section \ref{proofof-1.2-1.3}, replacing $L(1, \chi)$ by the short Euler product $L(1, \chi; y)$, and keeping track of $y$ in the error terms.  We define for any $r \geq 1$
	\begin{align*}
	\phi_X(\tau; y) &:=\frac{1}{|\mathcal{F}_3(X)|}\sum_{\substack{\chi\in \mathcal{F}_3(X)\\ |L(1, \chi; y)|\geq e^{\gamma}\tau}}1, \end{align*} which gives
	\begin{align*} 2r\int_{0}^{\infty}t^{2r-1}\phi_X(t; y)dt
	&=\frac{e^{-2r\gamma}}{|\mathcal{F}_3(X)|}\sum_{\chi\in \mathcal{F}_3(X)}|L(1, \chi; y)|^{2r}.
	\end{align*}
	We apply Theorem \ref{asymp moment under GRH} for $B=A^4$ and $e^{10}\leq A\leq (\log_2 X)^C$ to get 
	\begin{align*}
&2r \int_{0}^{\infty}t^{2r-1}\phi_X(t; y)dt=e^{-2\gamma r} E\left(|L(1, \X; y)|^{2r}\right)+O\left(e^{-2\gamma r}X^{-\frac{23}{100}}  E\left(|L(1, \X; y)|^{2r}\right)\right),
\end{align*}
	uniformly for $r\leq \frac{\log X \log_2 X}{4 e^{37} \log A}$ and $y=A^4 \log X \log_2 X$ (notice that $r< y$). Using \eqref{expectation at y} and \eqref{tau and kappa relation at y},
	we get
	\begin{align}\label{asymp for kappa2-short}
	\int_{0}^{\infty}t^{2r-1}\phi_X(t; y)dt 
	&=(\log r)^{2r}\exp\left(\frac{2r}{\log r}(C_{\max}-1)+O\left(\frac{r}{(\log r)^2}\right)+O\left(\frac{r^2}{y \log r}\right)\right).
	\end{align}
	Let  $r_2=r e^{\delta}$, where $\delta>0$ is sufficiently small to be determined later. We apply  \eqref{asymp for kappa2-short} with $r_2$ to obtain  
	\begin{align*}
	\int_{\tau+\delta}^{\infty}t^{2r-1}\phi_X(t; y)dt &\leq
	 (\log{r} )^{2 r} \exp\left(2r(1-e^{\delta})\log \left(1+\frac{\delta}{\tau}\right)+2r e^{\delta}\log \left(1+\frac{\delta}{\log r}\right)\right) \\
	& \hspace{-1.5cm}  \times \exp\left(\frac{2r e^{\delta}}{\log r}\left(C_{\max}-1\right)+O\left(\frac{r}{(\log r)^2}\right)+O\left(\frac{r^2}{y\log r}\right)\right)  \exp \left( 2r (1-e^\delta) \left( \log{\tau} - \log_2{r} \right) \right), \end{align*}
	which holds uniformly for 
	\begin{align*} 
	r_2 \leq \frac{\log X \log_2 X}{4 e^{37} \log A} \iff r\leq  e^{-\delta}\frac{\log X \log_2 X}{4 e^{37} \log A}.
	\end{align*}
We now choose $r = r(\tau)$ by $\log{r} = \tau - C_{\max}$, and $\delta=\frac{c}{\sqrt{\log r}}$ 
	to get
	\begin{align*}
	\int_{\tau+\delta}^{\infty}t^{2r-1}\phi_X(t; y)dt& \leq (\log r)^{2r}\exp\left(\frac{2r}{\log r}\left(C_{\max}-1\right)+O\left(\frac{r}{(\log r)^2}\right)+O\left(\frac{r^2}{y\log r}\right)\right) \exp \left( {-\frac{c^2r}{(\log r)^2}} \right)		\end{align*}
	for some  constant $c$. 
	 Using \eqref{asymp for kappa2-short}, we have 
	\begin{align*}
	\int_{\tau+\delta}^{\infty}t^{2r-1}\phi_X(t; y)dt &\leq \left(\int_{0}^{\infty}t^{2r-1}\phi_X(t; y)dt\right) \exp \left( {-\frac{c^2r}{(\log r)^2}} \right),
	\end{align*}
	and similarly, we can prove that
	\begin{align*}
	\int_0^{\tau-\delta}t^{2r-1}\phi_X(t; y)dt\leq \left(\int_{0}^{\infty}t^{2r-1}\phi_X(t; y)dt\right) \exp \left( {-\frac{c^2r}{(\log r)^2}} \right).
	\end{align*}
Working as in the proof of Theorem \ref{Asympformula}, we get
	\begin{align*}
	\phi_X(\tau+\delta; y)\leq
	\exp\left(-\frac{2e^{\tau-C_{\max}}}{\tau}\left(1+O\left(\delta\right)+O\left(\frac{e^{\tau}}{y}\right)\right)\right)
	\leq \phi_X(\tau-\delta; y),
	\end{align*}
	and
	\begin{align}\label{distribution for phi-short}
	\phi_{X}(\tau; y)=\exp\left(-\frac{2e^{\tau-C_{\max}}}{\tau}\left(1+O\left(\frac{1}{\tau^{\frac{1}{2}}}\right)+O\left(\frac{e^{\tau}}{y}\right)\right)\right)
	\end{align}
uniformly in the range
	\begin{align*}
	\tau&=\log r+C_{\max}\leq \log \left( e^{-\delta}\frac{\log X \log_2 X}{4 e^{37} \log A}
	\right) + C_{\max}.
	\end{align*}
To conclude the proof, we know replace $\phi_X(\tau; y)$ by $\phi_X(\tau)$, with the appropriate error terms. 
Using Proposition \ref{l function via SEP}, we can write for $y=A^4 \log X \log_2 X$ and $2< e^{10}\leq A\leq (\log_2 X)^C$,
	\begin{align*}
	\phi_{X}(\tau)=\frac{1}{|\mathcal{F}_3(X)|}\sum_{\substack{\chi\in \mathcal{F}_3(X)\\ |L(1, \chi)|\geq e^{\gamma}\tau}}1&=\frac{1}{|\mathcal{F}_3(X)|}\sum_{\substack{\chi\in \mathcal{F}_3(X)\\ |L(1, \chi; y)|\geq e^{\gamma}\tau \left(1+O\left(\frac{1}{A\log_2 X}\right)\right)}}1+O(X^{-\frac{1}{15}})\\
	&= \phi_{X}\left(\tau\left(1+O\left(\frac{1}{A\log_2 X}\right)\right); y\right)+O(X^{-\frac{1}{15}}),
	\end{align*}
	and from \eqref{distribution for phi-short}, we have
	\begin{align*}
	&\phi_{X}\left(\tau\left(1+O\left(\frac{1}{A\log_2 X}\right)\right); y\right)\\
	&=\exp\left(-\frac{2e^{\tau-C_{\max}+ O(\tau/(A \log_2{X}))}}{\tau}\left(1+O\left(\frac{1}{\tau^{\frac{1}{2}}}\right)+O\left(\frac{1}{A\log_2 X}\right)+O\left(\frac{e^{\tau}}{y}\right)\right)\right)\\
	&=\exp\left(-\frac{2e^{\tau-C_{\max}}}{\tau}\left(1+O\left(\frac{1}{\tau^{\frac{1}{2}}}\right)+O\left(\frac{1}{A}\right)+O\left(\frac{e^{\tau}}{y}\right)\right)\right).
	\end{align*}
	 Hence, we conclude that
	 \begin{align*}
	 \phi_{X}(\tau)&=\exp\left(-\frac{2e^{\tau-C_{\max}}}{\tau}\left(1+O\left(\frac{1}{\tau^{\frac{1}{2}}}\right)+O\left(\frac{1}{A}\right)\right)\right),
	 \end{align*}
	since $\frac{e^{\tau}}{y}\ll \frac{1}{A^4 \log A}$.
	 This completes the proof for $\phi_X(\tau)$. A similar estimate holds for  $\psi_{X}(\tau)$. 
	
\end{proof}	
	
\bibliographystyle{amsalpha}
\bibliography{Bibliography2}
\end{document}